\documentclass[12pt,a4paper]{article} 
\usepackage[colorlinks]{hyperref}
\usepackage{amsmath}
\usepackage{amsthm}
\usepackage{amsfonts}
\usepackage{amssymb}
\usepackage{mathabx}
\usepackage[mathscr]{eucal}
\usepackage{bussproofs}
\usepackage{lineno}
\usepackage{quiver}
\usepackage{float}
\usepackage{authblk}
\usepackage{color}
\usepackage{quiver}
\usepackage{fdsymbol}
\usepackage{tikzsymbols}

\theoremstyle{plain} 
\newtheorem{theorem}{Theorem}[section]

\usepackage{tikz}

\newtheorem{lemma}[theorem]{Lemma}

\newtheorem{corollary}[theorem]{Corollary}
\theoremstyle{definition}
\newtheorem{definition}[theorem]{Definition}
\newtheorem{example}[theorem]{Example}
\newtheorem{remark}[theorem]{Remark}

\newcommand{\PA}{\mathrm{PA}}

\newcommand{\PV}{\mathrm{PV}}

\newcommand{\TI}{\mathrm{TI}}

\newcommand{\PRA}{\mathrm{PRA}}

\newcommand{\NP}{\mathrm{NP}}

\newcommand{\TFNP}{\mathrm{TFNP}}
\newcommand{\PRWO}{\mathrm{PRWO}}

\newcommand{\PLS}{\mathrm{PLS}}

\newcommand{\LS}{\mathrm{LS}}

\newcommand{\TSP}{\mathrm{TSP}}
\newcommand{\PInd}{\mathrm{PInd}}
\newcommand{\Ind}{\mathrm{Ind}}
\newcommand{\LInd}{\mathrm{LInd}}
\newcommand{\wPInd}{\mathrm{wPInd}}
\newcommand{\wInd}{\mathrm{wInd}}

\begin{document}

\title{Witnessing Flows in Arithmetic} 

\author[]{Amirhossein Akbar Tabatabai\footnote{Supported by the Czech Academy of Sciences (RVO 67985840). The support by the FWF project P 33548 is also gratefully acknowledged.}}
\affil[]{Institute of Mathematics, Czech Academy of Sciences}

\date{}

\maketitle

\begin{abstract}
One of the elegant achievements in the history of proof theory is the characterization of the provably total recursive functions of an arithmetical theory by its proof-theoretic ordinal as a way to measure the time complexity of the functions. Unfortunately, 
the machinery is not sufficiently fine-grained to be applicable on the weak theories on the one hand and to capture the bounded functions with bounded definitions of strong theories, on the other.
In this paper, we develop such a machinery to address the bounded theorems of both strong and weak theories of arithmetic. In the first part, we provide a refined version of ordinal analysis to capture the feasibly definable and bounded functions that are provably total in $\PA+\bigcup_{\beta \prec \alpha} \TI(\prec_{\beta})$, the extension of Peano arithmetic by transfinite induction up to the ordinals below $\alpha$. Roughly speaking, we identify the functions as the ones that are computable by a sequence of $\PV$-provable polynomial time modifications on an initial polynomial time value, where the computational steps are indexed by the ordinals below $\alpha$, decreasing by the modifications. In the second part, and choosing $l \leq k$, we use similar technique to capture the functions with bounded definitions in the theory $T^k_2$ (resp. $S^k_2$) as the functions computable by exponentially (resp. polynomially) long sequence of $\PV_{k-l
+1}$-provable reductions between $l$-turn games starting with an explicit $\PV_{k-l
+1}$-provable winning strategy for the first game. \\

\noindent \textbf{Keywords}. Total search problems, ordinal analysis, bounded arithmetic, local search programs.

\end{abstract}

\section{Introduction}

One of the elegant achievements in the history of proof theory is the \emph{witnessing} techniques connecting the provability of a formula of a certain form to the existence of a computational entity (algorithm \cite{HandbookRealizability}, function \cite{ProvablyRecursiveFunctions}, term in a type theory \cite{HandbookDialectica}, etc.) that \emph{witnesses} the truth of the formula. These connections identify the power of the theories and they are useful to establish the unprovability of a formula by showing the non-existence of the corresponding witness. As an example, consider the \emph{ordinal analysis} as one of the well-known witnessing techniques that among many other things provides a characterization for the provably total recursive functions of some mathematical theories \cite{Kreisel,BussPA,ProvablyRecursiveFunctions}. (For a comprehensive high-level explanation, see \cite{Realm}). It connects the provability of the totality of a $\Sigma^0_1$-definable function to its time complexity, measured by the proof-theoretic ordinal of the theory. The characterization then leads to some independence results for the formulas in the form $A=\forall x \exists y B(x, y)$, where $B \in \Sigma^0_1$ is a definition of a function with a faster growth rate and hence higher time complexity than what the theory can actually reach \cite{ProvablyRecursiveFunctions}.

There are, however, some settings in which the witnessing techniques and especially the one based on ordinal analysis break down. 
Sometimes, we are only interested in the formulas with no existential quantifiers to witness (e.g. $A=\forall x B(x)$, where $B$ is a quantifier-free formula). Other times, 
the theory is so weak that even the basics of the witnessing machinery goes beyond the power of the theory. Even working with powerful theories, there can be some problematic situations. For instance, one may be interested in bounded formulas (e.g. $\forall x \exists y \leq t(x) B(x, y)$, where all quantifiers in $B$ are also bounded) provable in Peano Arithmetic, denoted by $\PA$. Here, what the usual witnessing methods provide is rather weak or even useless. For instance, using ordinal analysis for $\PA$, the best thing we can learn in the bounded setting is the existence of an algorithm to compute $y$ using a huge amount of time measured by $\epsilon_0$, the ordinal of the theory. This is much weaker than what we started with, i.e., the provability of the totality of the function with a \emph{bounded} definition. The reason roughly is that the algorithm leads to the existence of the definition $\exists w C(x, w, y)$ for the function, where $w$ encodes the computation and $\PA$ proves $\forall x \exists yw C(x, w, y)$. However, the computation $w$ can be huge and hence unbounded by the terms in the language and in this sense proving the totality of a \emph{bounded} function with a \emph{bounded} definition is stronger than the existence of such an algorithm. 

To solve this type of issues and to address both weak theories and low complexity formulas, many new witnessing techniques were designed, from witnessing the universal provable formulas by short propositional proofs \cite{Cook,paris19810,PavKra,BussPropProofComplexity} to witnessing provable bounded formulas in first-order bounded theories of arithmetic in special cases \cite{BussThesis,PLS,JanSkellyThapen} and then in general cases \cite{SkellyThapen,thapen,BBNotation}, using game reductions and different versions of local search problems. A similar technique is also developed for second-order bounded theories of arithmetic \cite{BBFeF,KrajR,BBV,Leszek} and even for Peano arithmetic \cite{Be}. 
In this paper, we will continue this line of research by providing a general witnessing machinery to witness the low-complexity theorems both in strong and weak theories of arithmetic using a computational entity that we call a flow. Flows are meant to formalize the idea of \emph{flowing} information and they formally are uniform suitably long sequences of $\PV$-provable implications between formulas in a suitable class, where $\PV$ is Cook's theory for polynomial time functions. We will work with two different types of flows in this paper, \emph{ordinal flows} and \emph{$k$-flows}.

\subsubsection*{Ordinal flows}
An ordinal flow is a transfinite uniform sequence of $\PV$-provable implications between universal formulas.
We use ordinal flows to witness low-complexity theorems of the theory $\PA+\bigcup_{\beta \prec \alpha} \TI(\prec_{\beta})$, where $\alpha$ is an ordinal with a certain polynomial time representation and $\TI(\prec_{\beta})$ means the transfinite induction up to the ordinal $\beta$. 
More precisely, we witness 
the provability of an implication between two universal formulas in $\PA+\bigcup_{\beta \prec \alpha}\TI(\prec_{\beta})$ by a uniform sequence of $\PV$-provable implications of length $\beta \prec \alpha$. Using Herbrand's theorem for $\PV$, we push the witnessing further to witness
the $\PA+\bigcup_{\beta \prec \alpha}\TI(\prec_{\beta})$-provable formulas in the form $A=\forall \bar{x}\exists \bar{y} B(\bar{x},\bar{y})$, where $B$ is a polynomial time computable predicate
by an algorithm to compute $\bar{y}$ by a sequence of $\PV$-provable polynomial time modifications on an initial polynomial time value, where the computational steps are indexed by the ordinals below $\alpha$, decreasing by the modifications. Our result generalizes the main theorem of \cite{Be} that developed a similar characterization for $\PA$. However, as we will explain below, even for that special case, we use a simpler and easier to generalize methodology.

To compare our result to the existing literature on ordinal analysis, it is important to focus on the role of the polynomial time computable functions and the theory $\PV$ in our contribution. First, note that changing the polynomial time functions and $\PV$ in our characterization to the elementary or primitive recursive functions and $\mathrm{ERA}$ or $\PRA$, respectively, makes the characterization an easy consequence of the known facts in the ordinal analysis literature. For instance, one can use the powerful witnessing theorems in \cite{Fri,Avigad} or the interesting algebraic presentation of the ordinals in \cite{Lev}. What is not trivial, though, is providing a low-complexity version suitable to witness the low-complexity theorems of arithmetic. To reach such a version, we have two options. The first, as followed in the above-mentioned paper \cite{Be}, rewrites the continuous cut elimination technique \cite{Buc1,Buc2}, replacing all primitive recursive functions by more careful polynomial time computable operations \cite{BBP}. The second as an indirect approach uses the 
known results in ordinal analysis as a \emph{black-box} and rewitness them in a feasible manner to circumvent redoing the tedious ordinal analysis argument. This option is what we follow in the present paper. More precisely, we first use the refined ordinal analysis in \cite{Fri} to show that a $\Pi^0_2$-formula is provable in the theory  $\PA+\bigcup_{\beta \prec \alpha}\TI(\prec_{\beta})$ iff it is provable in an extension of $\PRA$ with a weak form of transfinite induction. Then, using a suitable polynomial time representation for the ordinals below $\alpha$, we will transform a proof in the weaker theory to a sequence of $\PV$-provable polynomial time modifications described above. Our technique of using ordinally long sequence of easy modifications is similar to what used in \cite{Avigad}, although its machinery has a more model-theoretic character and also implements the ordinal analysis from the scratch. Roughly speaking, \cite{Avigad} provides a similar witnessing theorem using elementary functions rather than polynomial time functions in its ordinal flows. However, to have a verifiablity criterion, it insists 
on having the whole witnessing process provable inside the meta-theory $\PRA$. The witnessing machinery of \cite{Avigad} cannot be directly used to prove the low-complexity version we are interested in here. The reason is its use of $\PRA$-formalized Herbrand's theorem for first-order logic that uses cut elimination and it is extremely costly to be directly formalizable in $\PV$. To solve the issue, as \cite{Avigad} also suggests, one must witness the Herbrand's theorem part by a sequence of $\PV$-verifiable modifications or equivalently witness the first-order logic by such modifications, directly. This is one of the things we do in the present paper.
Therefore, although our work is inspired by  \cite{Be} and the witnessing theorems in bounded arithmetic and hence its technique was developed independent from \cite{Avigad},  one can interpret our contribution as a generalization of \cite{Avigad} making its machinery applicable even in the low-complexity settings.

\subsubsection*{$k$-flows}

A (polynomial) $k$-flow is a uniform (polynomially) exponentially long sequence of $\PV$-provable implications between $\hat{\Pi}^b_k$-formulas. Recall that $\hat{\Pi}^b_k$- ($\hat{\Sigma}^b_k$-formulas) are roughly the formulas with $k$-many bounded quantifier blocks starting with a universal (existential) block and followed by a quantifier-free formula over the language $\mathcal{L}_{\PV}$ that has a term for any polynomial time computable function. 
We will witness the provability of an implication between $\hat{\Pi}^b_k$-formulas in $T^k_2$ (resp. $S^k_2$)
by a $k$-flow (resp. polynomial $k$-flow). To push the witnessing further, we can use Herbrand's theorem again for the universal theory $\PV$. However, this time the formulas are in $\hat{\Pi}^b_k$ and hence we have $k$-many layers of quantifier to peel off.
To control the number of layers we intend to remove, we will follow a relative approach. We fix a  number $l \leq k$ and only peel off the outmost $l$ many quantifier blocks. More precisely, we first move the $\PV$-provable implications from $\PV$ to $\PV_{k-l+1}$, a universal theory for the functions in the $(k-l+1)$-th level of the polynomial hierarchy. This way we can pretend that all the formulas in $\hat{\Sigma}^b_{k-l} \cup \hat{\Pi}^b_{k-l}$ are quantifier-free. Therefore, only $l$ many quantifier blocks are left to witness. Using Herbrand's theorem for the theory $\PV_{k-l+1}$ and reading any quantifier-free formula in the language of $\PV_{k-l+1}$ as an $l$-turn game \cite{SkellyThapen}, we can then witness any $\PV$-provable implication by an explicit $\PV_{k-l+1}$-verifiable reduction between $l$-turn games. These reductions are somewhat \emph{non-deterministic} mapping their input values to \emph{some} possible instances, where one of the options may work, (see the second part in Theorem \ref{Herbrand} to see what we mean by non-determinism in this context).
Finally, using these reductions,
we show that a formula in the form $\forall \bar{x} \exists y \leq r(\bar{x}) B(\bar{x}, y)$, where $B \in \hat{\Sigma}^b_{k-l} \cup \hat{\Pi}^b_{k-l}$ is provable in $T^k_2$ (resp. $S^k_2$) iff there is a uniform (polynomially) exponentially long sequence of $\PV_{k-l+1}$-verifiable reductions between $l$-turn games, starting from an explicit $\PV_{k-l+1}$-verifiable winning strategy for the first game. We will only spell out the details for $l=1,2$. For $l=1$, we show that our witnessing theorem reproves some of the well-known witnessing theorems for $S^k_2$ and $T^k_2$ including the usual witnessing of $\hat{\Sigma}^b_k$-definable functions of $S^k_2$ by $\Box^p_k$-functions \cite{BussThesis} and $\hat{\Sigma}^b_1$-definable multifunctions of $T^1_2$ by polynomial local search problems \cite{PLS}. For $l=2$, we provide new witnessing theorems. For $T^k_2$, there are other witnessing methods providing similar characterizations as ours based on better (i.e., deterministic) game reductions 
\cite{SkellyThapen,thapen}. The theory of flows can also prove these stronger characterizations. However, it needs to work with more involved notions of a $k$-flow than what we have here. We leave such investigations to another paper. 
For $S^k_2$, 
however, our result, to the best of our knowledge, is the only characterization in the same style of the original witnessing theorems \cite{BussThesis} that reduce the provability in $S^k_2$ to a \emph{polynomially} long sequence of feasible modifications.
Of course, one can use the
conservativity of $S^k_2$ over $T^{k-1}_2$ for $\hat{\Sigma}^b_{k}$-formulas and then using the witnessing for $T^{k-1}_2$ by the deterministic game reductions \cite{SkellyThapen,thapen} or any other characterization \cite{BBPLS,BBNotation}, find a witnessing theorem for $S^k_2$. Using this approach, 
the characterizations 
provide an exponentially long sequence of deterministic reductions while we provide a polynomially long sequence of more complex non-deterministic reductions. These two different approaches can be seen as an instance of the usual phenomenon of simulating the huge power of the deterministic exponential time with polynomial time non-determinism, where the latter, if possible, is more informative than the former. 

Finally, to compare our witnessing method to the rich literature on witnessing theorems in bounded arithmetic, let us emphasize two points that we find unique to our characterization. First, unlike the methods used in \cite{PLS,JanSkellyThapen,SkellyThapen,thapen,BBPLS,BBNotation}, our machinery is sufficiently general to \emph{directly} witness bounded theories arising from practically \emph{any} type of bounded induction  \cite{ILI}. For instance, for any $m \geq 2$, consider the language $\mathcal{L}_{\PV} \cup \{\#_m\}$, where $x\#_2 y=2^{|x||y|}$ and $x \#_{i+1} y=2^{|x| \#_i |y|}$ and define the class $\hat{\Pi}^b_k(\#_m)$ and the theory $\PV(\#_m)$ over the new language similar to $\hat{\Pi}^b_k$ and $\PV$ over $\mathcal{L}_{\PV}$. Now, for any $n \geq 0$, $m \geq n+2$, and $k \geq 1$, define the theory $R^k_{m, n}$ as the extension of a basic universal theory to handle the function symbols, by the induction axiom
\[
A(0) \wedge \forall x (A(x) \to A(x+1)) \to \forall x A(|x|_n)
\]
where $A \in \hat{\Pi}^b_k(\#_m)$, $|x|_0=x$ and $|x|_{j+1}=||x|_j|$. It is easy to imitate our technique in the present paper to witness $R^k_{m,n}$-provable implications between $\hat{\Pi}^b_k(\#_m)$-formulas by a uniform sequence of $\PV(\#_m)$-provable implications between $\hat{\Pi}^b_k(\#_m)$-formulas with the length $|t|_n$, for some term $t$. This can be even more generalized to any type of induction satisfying some basic properties \cite{ILI}.

The second point is that the length of our witnessing flows \emph{honestly} reflects the type of the induction we use. For instance, for $S^k_2$ and $T^k_2$, we use polynomially long and exponentially long $k$-flows, respectively and more generally, in $R^k_{m, n}$ where the induction is up to $|x|_n$, the length of the witnessing flow is $|t|_n$, for some term $t$, see \cite{ILI}. This honest correspondence is not typical with the above-mentioned characterizations. For instance, the polynomially long adaptation of the known characterizations for $T^k_2$ \cite{SkellyThapen,thapen}, i.e., polynomially long sequence of $\PV$-verifiable deterministic reductions between $k$-turn games, does not witness $S^k_2$-provable implications. The reason is that any polynomially long iteration of a deterministic reduction is again a deterministic reduction itself. Therefore, if such a witnessing theorem holds, one can witness the implications in $S^k_2$ between $\hat{\Pi}^b_k$-formulas by a polynomially long sequence of reductions and hence only one reduction. Thus, the $\hat{\Sigma}^b_k$-definable functions of $S^k_2$ must be all polynomial time computable and as all the functions in $\Box^p_k$ are $\hat{\Sigma}^b_k$-definable in $S^k_2$, the polynomial hierarchy must collapse. This simple observation shows that the non-determinism we use in our reductions is essential to have an honest characterization. Moreover, it shows that our characterization for $S^k_2$ is not a simple consequence of the methodologies used for $T^k_2$ in \cite{SkellyThapen,thapen} or even in \cite{BBPLS,BBNotation}. \\
 
Here is the structure of the paper. In Section \ref{Preliminaries}, we recall the basic definitions of different languages and arithmetical systems we use in this paper. In Section \ref{PTimeRepresentation}, we introduce our version of polynomial time ordinal representation and we recall the one introduced in \cite{BBP} for $\epsilon_0$. 
In Section \ref{OrdinalFlowsandArithmetic}, we present ordinal flows and the witnessing technique to reduce the provability of the low complexity statements in the theory $\PA+\bigcup_{\beta \prec \alpha}\TI(\prec_{\beta})$.
Finally, in Section \ref{kFlowsAndArithmetic}, we introduce $k$-flows to witness the provability of the low complexity statements in the theories $S^k_2$ and $T^k_2$.

\section{Preliminaries}\label{Preliminaries}

For any first-order language $\mathcal{L}$, by an $\mathcal{L}$-formula, we mean any expression constructible by the connectives $\{\wedge, \vee, \forall, \exists\}$ from the atomic formulas (including $\bot$ and $\top$) and their negations. The formula $\neg A$ is defined via de Morgan laws and $A \to B$ is an abbreviation for $\neg A \vee B$. By an $\mathcal{L}$-term, we simply mean a term in the language $\mathcal{L}$. By $\bar{t}$, we mean a sequence of terms in the language and $\bar{x}$ means a sequence of variables.\\
To introduce the system $\PV$, let us recall Cobham's machine-independent characterization of polynomial-time computable (\emph{ptime}, for short) functions \cite{Cobham}. It states that a function is ptime iff it is constructible from certain basic functions by composition and a weak sort of recursion called the \emph{bounded recursion on notation}. Any such construction provides an algorithm to compute the corresponding ptime function. Let $\mathcal{L}_{\PV}$ be a first-order language with a function symbol for any such algorithm. In \cite{Cook}, Cook introduced an equational theory over the language $\mathcal{L}_{\PV}$ to reason about ptime functions. The theory essentially consists of the defining axioms for the function symbols together with a sort of induction rule. Later, a conservative first-order extension of $\PV$, denoted by $\PV_1$, was introduced \cite{Pudlak}. The theory has the \emph{polynomial induction axiom scheme}, denoted by $\PInd$
\[
A(0) \wedge \forall x (A(\lfloor \frac{x}{2} \rfloor) \to A(x)) \to \forall x A(x),
\]
for any quantifier-free formula $A(x)$ and is universally axiomatizable \cite{Pudlak}. 
In this paper, we will only use the theory $\PV_1$ and not $\PV$. Therefore, by abuse of notation, we will use the name $\PV$ to denote its first-order extension $\PV_1$. 

In any language extending $\mathcal{L}_{\PV}$, by a \emph{bounded quantifier}, we mean a quantifier in the form $\forall x (x \leq t \to A(x))$ or $\exists x (x \leq t \wedge A(x))$, abbreviated by $\forall x \leq t \, A(x)$ and $\exists x \leq t \, A(x)$, respectively. For any sequence of variables $\bar{x}=(x_1, \ldots, x_n)$ and terms $\bar{t}=(t_1, \ldots, t_n)$, by $Q \bar{x} \leq \bar{t} \, A(\bar{x})$, we mean $Q x_1 \leq t_1 Q x_2 \leq t_2 \ldots A(x_1, \ldots, x_n)$, for any $Q \in \{\forall, \exists\}$. 

By recursion on $k$, define the classes $\hat{\Sigma}^b_k$ and $\hat{\Pi}^b_k$ of $\mathcal{L}_{\PV}$-formulas in the following way:
\begin{description}
\item[$\bullet$]
$\hat{\Pi}^b_0=\hat{\Sigma}^b_0$ is the class of all quantifier-free formulas,
\item[$\bullet$]
$\hat{\Sigma}_k^b \subseteq \hat{\Sigma}^b_{k+1}$ and $\hat{\Pi}^b_k \subseteq \hat{\Pi}^b_{k+1}$,
\item[$\bullet$]
$\hat{\Pi}^b_k$ and $\hat{\Sigma}^b_k$ are closed under conjunction and disjunction,
\item[$\bullet$]
If $B(x) \in \hat{\Sigma}^b_k$ then $\exists x \leq t \; B(x) \in \hat{\Sigma}^b_k$ and $\forall x \leq t \; B(x) \in \hat{\Pi}^b_{k+1}$ and
\item[$\bullet$]
If $B(x) \in \hat{\Pi}^b_k$ then $\forall x \leq t \; B(x) \in \hat{\Pi}^b_k$ and $\forall x \leq t \; B(x) \in \hat{\Sigma}^b_{k+1}$.
\end{description}
Define $\hat{\Sigma}^b_{\infty}=\hat{\Pi}^b_{\infty}$ as $\bigcup_{k=0}^{\infty} \hat{\Sigma}^b_{k}$ that is the same as $\bigcup_{k=0}^{\infty} \hat{\Pi}^b_{k}$. For the sake of simplicity, we suppressed the free variables in our notation. However, let us emphasize that they are also allowed to be used in the formulas. 

By the axiom scheme $\hat{\Pi}^b_k-\PInd$, we mean
\[
A(0) \wedge \forall x (A(\lfloor \frac{x}{2} \rfloor) \rightarrow A(x)) \rightarrow \forall x A(x),
\]
for any $A \in \hat{\Pi}^b_k$ and by $\hat{\Pi}^b_{k}-\Ind$, we mean
\[
A(0) \wedge \forall x (A(x) \rightarrow A(x+1)) \rightarrow \forall x A(x),
\]
for $A \in \hat{\Pi}^b_k$. The schemes $\hat{\Sigma}^b_{k}-\PInd$ and $\hat{\Sigma}^b_{k}-\Ind$ are defined similarly.
For any $k \geq 1$, define the theories $S^k_2$ and $T^k_2$ as $\PV+\hat{\Pi}^b_k-\PInd$ and  $\PV+\hat{\Pi}^b_k-\Ind$, respectively. It is known that $S^k_2$ (resp., $T^k_2$) proves $\hat{\Sigma}^b_{k}-\PInd$ (resp., $\hat{\Sigma}^b_{k}-\Ind$). It is also useful to mention that the following axiom scheme, denoted by 
$\hat{\Pi}^b_k-\LInd$
\[
A(0) \wedge \forall x (A(x) \rightarrow A(x+1)) \rightarrow \forall x A(|x|),
\]
where $A \in \hat{\Pi}^b_k$, is provable in $S^k_2$. The same also holds for $\hat{\Sigma}^b_k-\LInd$, where we replace $\hat{\Pi}^b_k$ by $\hat{\Sigma}^b_k$ \cite{BussThesis,JanBook}. The following theorem is true for theories $S^k_2$ and $T^k_2$ \cite{JanBook}.
\begin{theorem}(Parikh)
Let $T$ be either $S^k_2$ or $T^k_2$, for some $k \geq 1$ and $A(\bar{x}, y)$ be an $\mathcal{L}_{\PV}$-formula in $\hat{\Sigma}^b_{\infty}$. Then, if $T \vdash \forall \bar{x} \exists y A(\bar{x}, y)$, then there exists an $\mathcal{L}_{\PV}$-term $t(\bar{x})$ such that $T \vdash \forall \bar{x} \exists y \leq t(\bar{x}) A(\bar{x}, y)$.
\end{theorem}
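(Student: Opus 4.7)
The plan is a contradiction argument combining compactness with a cut-substructure construction. Note first that $T$ is axiomatised by formulas whose matrices are bounded: every axiom of $\PV$ is quantifier-free, and each instance of $\hat{\Pi}^b_k-\PInd$ or $\hat{\Pi}^b_k-\Ind$ has the shape $\forall \vec{p}\,(B(\vec{p}) \to \forall x\, A(x, \vec{p}))$ with $A, B \in \hat{\Pi}^b_k$. Assuming toward contradiction that no $\mathcal{L}_{\PV}$-term $t(\bar{x})$ satisfies $T \vdash \forall \bar{x}\exists y \leq t(\bar{x})\, A(\bar{x}, y)$, I introduce fresh constants $\bar{c}$ and consider
\[
T^{*} := T \cup \{\neg \exists y \leq t(\bar{c})\, A(\bar{c}, y) : t \text{ an } \mathcal{L}_{\PV}\text{-term}\}.
\]
For any finite subset, a term $s$ dominating the finitely many $t$'s produces, by hypothesis, a model of $T$ interpreting $\bar{c}$ in which no $y \leq s(\bar{c})$ witnesses $A$, so $T^{*}$ is finitely satisfiable. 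Compactness yields $M \models T^{*}$.

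Next, form the cut $I := \{a \in M : a \leq t^{M}(\bar{c}) \text{ for some } \mathcal{L}_{\PV}\text{-term } t\}$. Since compositions of $\mathcal{L}_{\PV}$-terms are again terms, $I$ is closed under every $\mathcal{L}_{\PV}$-function and is downward closed under $\leq^{M}$; a routine induction on formula structure shows that bounded $\mathcal{L}_{\PV}$-formulas are absolute between $I$ and $M$ for parameters from $I$. The universal $\PV$-axioms pass to $I$ automatically. The delicate step is the induction scheme: fix $A(x, \vec{p}) \in \hat{\Pi}^b_k$, take $\vec{p} \in I$ and $x_0 \in I$ bounded by a term $t(\bar{c})$, and assume the induction hypothesis for $A$ holds in $I$. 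I would then relativise by setting $A'(x) := A(x, \vec{p}) \vee x > t(\bar{c})$, which remains in $\hat{\Pi}^b_k$; using downward closure of $I$ and closure under $x+1$ (resp.\ $\lfloor x/2 \rfloor$ for $\PInd$), the step-hypothesis for $A'$ holds throughout $M$, and $M \models A'(0)$ follows from $I \models A(0, \vec{p})$ by absoluteness. The induction axiom in $M$ therefore yields $M \models A'(x_0)$; since $x_0 \leq t(\bar{c})$ kills the second disjunct, we obtain $M \models A(x_0, \vec{p})$ and hence $I \models A(x_0, \vec{p})$ by absoluteness, showing $I \models T$.

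Finally, since $T \vdash \forall \bar{x}\exists y\, A(\bar{x}, y)$ and $I \models T$, some $y \in I$ satisfies $I \models A(\bar{c}, y)$, and absoluteness gives $M \models A(\bar{c}, y)$. But $y \in I$ means $y \leq t(\bar{c})$ for some $\mathcal{L}_{\PV}$-term $t$, directly contradicting the matching axiom of $T^{*}$. The main obstacle is the middle paragraph: pushing an induction axiom of $T$ from $M$ down to the cut $I$. The relativisation $A'(x) := A(x,\vec{p}) \vee x > t(\bar{c})$ handles this, but one must check that $A'$ stays in the right class $\hat{\Pi}^b_k$ and that the closure of $I$ under $x+1$ or $\lfloor x/2 \rfloor$ is genuinely enough to lift the step-hypothesis from inside the cut to all of $M$, separately for $\hat{\Pi}^b_k-\Ind$ and $\hat{\Pi}^b_k-\PInd$.
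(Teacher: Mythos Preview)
The paper does not prove Parikh's theorem; it is stated as a known result with a citation to \cite{JanBook}. Your compactness-plus-cut argument is the standard model-theoretic proof and is correct.

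The concerns you flag at the end are not obstacles. Since $x > t(\bar{c})$ is quantifier-free it lies in $\hat{\Pi}^b_0 \subseteq \hat{\Pi}^b_k$, and the paper's definition of $\hat{\Pi}^b_k$ is explicitly closed under disjunction, so $A' \in \hat{\Pi}^b_k$. For the step-hypothesis lifting: in the $\Ind$ case, if $x \leq t(\bar{c})$ then $x \in I$ and $x+1 \leq t(\bar{c})+1$ so $x+1 \in I$, whence absoluteness transfers $A(x,\vec{p}) \to A(x+1,\vec{p})$ from $I$ to $M$; if $x > t(\bar{c})$ then $x+1 > t(\bar{c})$ and $A'(x+1)$ is vacuous. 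In the $\PInd$ case the argument is even cleaner, since $x \leq t(\bar{c})$ forces $\lfloor x/2 \rfloor \leq t(\bar{c})$ by downward closure alone. One cosmetic simplification: the paper takes $\PV$ to be universally axiomatisable, so its axioms pass to the $\mathcal{L}_{\PV}$-substructure $I$ with no separate check needed.
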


It is possible to define a universal theory for any level in the polynomial hierarchy, similar to what $\PV_1$ does for the polynomial time computable functions. More precisely, for any $k \geq 2$, one can define a universal theory $\PV_{k}$ over an extended language $\mathcal{L}_{\PV_k}$ that has a term for any function in the $k$-th level of the polynomial hierarchy, denoted by $\Box^p_k$ \cite{Pudlak}. We do not spell out the details of these theories. The only thing we need to know is that $\PV_k$ has an explicit term for the characteristic functions of $\hat{\Sigma}^b_k$-formula and its term construction allows defining functions by bounded recursion on notation \cite{Pudlak,JanBook}. As $\PV_k$ is universal, it enjoys Herbrand's theorem \cite{BussProofTheory,JanBook}:
\begin{theorem}\label{Herbrand}(Herbrand) Let $A(\bar{x}, y)$ and $B(\bar{x}, y, z)$ be two quantifier-free $\mathcal{L}_{\PV_k}$-formulas. Then: 
\begin{itemize}
    \item[$\bullet$]
If $\PV_k \vdash \exists y A(\bar{x}, y)$ then there exists an $\mathcal{L}_{\PV_k}$-term $f(\bar{x})$ such that $\PV_k \vdash A(\bar{x}, f(\bar{x}))$. 
    \item[$\bullet$]
If $\PV_k \vdash \exists y \forall z B(\bar{x}, y, z)$ then there are $\mathcal{L}_{\PV_k}$-terms $f_0(\bar{x})$, $f_1(\bar{x}, z_0)$, ...,  $f_m(\bar{x}, z_0, z_1, \ldots, z_{m-1})$  such that 
$
\bigvee_{i=0}^m B(\bar{x}, f_i(\bar{x}, z_0, \ldots, z_{i-1}), z_{i})
$ is provable in $\PV_k$.
\end{itemize}
\end{theorem}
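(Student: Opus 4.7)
The plan is to prove both clauses by the standard Herbrand argument, leveraging two features of $\PV_k$ that were emphasised just above the statement: it is universally axiomatised, and its language is closed under definition by cases on quantifier-free conditions (since $\PV_k$ contains a term for the conditional and characteristic functions of quantifier-free $\mathcal{L}_{\PV_k}$-formulas are definable by $\Box^p_k$-terms).

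For the first clause, I would take a cut-free sequent proof of $\exists y A(\bar{x},y)$ from finitely many universal axioms $\psi_1,\ldots,\psi_n$ of $\PV_k$; applying the classical midsequent/Herbrand theorem for universal theories extracts $\mathcal{L}_{\PV_k}$-terms $t_1(\bar{x}),\ldots,t_m(\bar{x})$ with
\[
\PV_k \vdash A(\bar{x},t_1(\bar{x})) \vee \cdots \vee A(\bar{x},t_m(\bar{x})).
\]
To collapse the disjunction to a single witness, I would define $f(\bar{x})$ by nested conditionals: return $t_1(\bar{x})$ if $A(\bar{x},t_1(\bar{x}))$ holds, otherwise $t_2(\bar{x})$ if $A(\bar{x},t_2(\bar{x}))$ holds, and so on, with default value $t_m(\bar{x})$. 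The resulting $f$ is a term of $\mathcal{L}_{\PV_k}$ (because the quantifier-free tests on $A$ have $\Box^p_k$-computable characteristic functions), and a quantifier-free case analysis gives $\PV_k \vdash A(\bar{x},f(\bar{x}))$.

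For the second clause, I would reduce to the first by Skolemising the inner universal quantifier. Introducing a fresh Skolem symbol $g$ and passing to the conservative universal extension $\PV_k^{g}$ obtained by adding $g$ as an uninterpreted function, the assumption becomes $\PV_k^{g} \vdash \exists y\, B(\bar{x},y,g(\bar{x},y))$, so the universal Herbrand theorem yields $\mathcal{L}_{\PV_k^{g}}$-terms $s_0(\bar{x}),s_1(\bar{x}),\ldots,s_m(\bar{x})$ whose disjunction $\bigvee_i B(\bar{x},s_i(\bar{x}),g(\bar{x},s_i(\bar{x})))$ is provable in $\PV_k^{g}$. By a standard nesting-depth argument (ordering the Herbrand witnesses by the depth at which $g$ occurs and substituting deeper occurrences first), one may assume each $s_i$ is built only from $\bar{x}$, pure $\mathcal{L}_{\PV_k}$-terms, and the previously occurring values $g(\bar{x},s_0),\ldots,g(\bar{x},s_{i-1})$. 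Replacing each $g(\bar{x},s_j(\ldots))$ by a fresh variable $z_j$ turns $s_i$ into an $\mathcal{L}_{\PV_k}$-term $f_i(\bar{x},z_0,\ldots,z_{i-1})$, and since $g$ has been eliminated from the resulting disjunction, conservativity gives
\[
\PV_k \vdash \bigvee_{i=0}^m B(\bar{x},f_i(\bar{x},z_0,\ldots,z_{i-1}),z_i).
\]

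The main obstacle is the middle step of the second clause: organising the Herbrand witnesses so that the Skolem symbol $g$ occurs in exactly the controlled nested pattern that lets us read off the $f_i$'s after renaming. This is really where one uses both that $\PV_k$ is universal (so Skolemising is conservative and the pure Herbrand extraction applies) and that $\mathcal{L}_{\PV_k}$ is closed under composition, so that the substitutions producing $f_i$ from $s_i$ stay inside $\mathcal{L}_{\PV_k}$. Everything else is bookkeeping built on top of the classical Herbrand theorem for universal first-order theories.
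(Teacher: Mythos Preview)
The paper does not give its own proof of this theorem: it is stated as a standard fact about universal theories with citations to Buss's chapter in the \emph{Handbook of Proof Theory} and Kraj\'{\i}\v{c}ek's book, so there is no in-paper argument to compare against. Your proposal is a correct and standard route---extracting a Herbrand disjunction from a cut-free proof and collapsing it to a single term via definition by cases for the first clause, and Skolemising the inner $\forall$ and unwinding the nesting for the second---and is essentially what one finds in those references. The only caveat is that the ``nesting-depth argument'' you invoke, while well known, is the genuinely delicate point and would need to be spelled out carefully in a full proof (one usually argues by induction on the maximal $g$-depth among the Herbrand terms, replacing an outermost $g$-occurrence by a fresh variable at each step); everything else is routine.
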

It is possible to generalize this theorem to a \emph{generalized Herbrand's theorem} to cover more alternations of quantifiers. However, in this paper, one can restrict oneself only to these two levels \cite{BussProofTheory}.

The system $\PV_k$ proves the scheme $\PInd$ for any quantifier-free $\mathcal{L}_{\PV_k}$-formula. As any $\mathcal{L}_{\PV_k}$-term can be defined by an $\mathcal{L}_{\PV}$-formula in $\hat{\Sigma}^b_k$, it is possible to represent any quantifier-free $\mathcal{L}_{\PV_k}$-formula by two $\mathcal{L}_{\PV}$-formulas, one in $\hat{\Sigma}^b_k$ and one in $\hat{\Pi}^b_k$. Using this fact, one can interpret $\PV_k$ inside the theory $S^k_2$.

Going beyond bounded theories of arithmetic, in a similar fashion to $\PV$ and using the construction of primitive recursive functions by composition and primitive recursion on certain basic functions, it is possible to extend the language $\mathcal{L}_{\PV}$ by a \emph{fresh} function symbol for any primitive recursive function. Denote this new language by $\mathcal{L}_{\PRA}$ and set the first-order theory $\PRA$ over $\mathcal{L}_{\PRA}$ as $\PV$ extended by the defining axioms for the new functional symbols and the induction axiom $A(0) \wedge \forall x (A(x) \to A(x+1)) \to \forall x A(x)$, for any quantifier-free formula in the new language. This is of course different from the usual definition of $\PRA$ as its language is extended by the ptime function symbols in $\mathcal{L}_{\PV}$ and the theory itself is extended by the theory $\PV$. Moreover, the formula in the induction axiom of $\PRA$ may contain the symbols from $\mathcal{L}_{\PV}$. However, as the functions in the Cobham calculus are constructible as primitive recursive functions, it is clear that the separation of the primitive recursive function symbols and ptime function symbols is just a technical point and is totally immaterial. In fact, our presentation of $\PRA$ is a conservative extension of the usual $\PRA$ and hence has nothing essentially different from the usual $\PRA$. \\ 
By Peano arithmetic, denoted by $\PA$, we mean the theory $\PV$ extended by full induction axiom scheme $A(0) \wedge \forall x (A(x) \to A(x+1)) \to \forall x A(x)$, for \emph{any} formula $A(x)$. This is also different from the usual definition of $\PA$. However, as all of the function symbols in $\mathcal{L}_{\PV}$ are definable in the usual language of $\PA$ and their functionality and totality are provable in the usual $\PA$, it is easy to see that our $\PA$ is a conservative extension of the usual $\PA$.\\
By $\Pi^0_2$, we mean the class of $\mathcal{L}_{\PV}$-formulas in the form $\forall \bar{x} \exists \bar{y}A(\bar{x}, \bar{y})$, where any quantifier in $A(\bar{x}, \bar{y})$ is bounded. For two theories $T$ and $S$ and a class of formulas $\Phi$, by $T \equiv_{\Phi} S$, we mean $T \vdash A$ iff $S \vdash A$, for any $A \in \Phi$.\\
Finally, let us recall some basics of the ordinal arithmetic. Apart from addition, multiplication and exponentiation of the ordinals, it is also possible to define
subtraction $\dotminus$ from \emph{left} such that $\alpha \dotminus \beta=0$, if $\alpha \prec \beta$ and $\alpha \dotminus \beta=\gamma$, if $\beta \preceq \alpha$, where $\gamma$ is the \emph{unique} ordinal with the property that $\beta + \gamma=\alpha$. Similarly, it is possible to define the division $d$ from \emph{left} such that if $\beta \neq 0$, then $d(\alpha, \beta)$ is the \emph{unique} $\gamma$ such that $\alpha=\beta \gamma + \delta$, for some $\delta \prec \beta$.

\section{Polynomial-time Ordinal Representations}\label{PTimeRepresentation}
In this section, we will introduce polynomial time ordinal representations and recall the concrete representation for the ordinal $\epsilon_0$ provided in \cite{BBP}. Both parts will be of essential use in Section \ref{OrdinalFlowsandArithmetic}.
\begin{definition}\label{t3-4}
Let $\alpha$ be an infinite ordinal closed under addition, multiplication and the operation $\beta \mapsto \omega^{\beta}$. We call the tuple 
\[
\mathfrak{O}=(\mathcal{O}, \prec, +, \cdot, \dotminus, d(\cdot, \cdot), o, x \mapsto  \omega^{x}, \mathbf{0}, \mathbf{1}, \omega)
\] 
a \emph{polynomial time representation with a primitive recursive exponentiation} (ptime representation, for short) for the ordinal $\alpha$, if:
\begin{itemize}
\item[$\bullet$]
$\mathcal{O}$ is a unary polynomial time relation on the natural numbers represented as a quantifier-free $\mathcal{L}_{\PV}$-formula. Its intended meaning is the set of all the representations of the ordinals below $\alpha$. We use small Greek letters to denote the elements of $\mathcal{O}$. For instance, by $\forall \beta \, A(\beta)$, we actually mean $\forall x (\mathcal{O}(x) \to A(x))$.
\item[$\bullet$]
$\prec$ is a binary polynomial time relation on the natural numbers, represented as a quantifier-free $\mathcal{L}_{\PV}$-formula.
Its intended meaning is the order over the ordinals below $\alpha$. We define the relation $(\gamma \preceq \beta)$ as $(\gamma \prec \beta) \vee (\gamma=\beta)$.
\item[$\bullet$]
$+, \cdot, \dotminus$ and $ d(\cdot, \cdot)$ are binary polynomial time functions, represented as $\mathcal{L}_{\PV}$-terms. Their intended meaning is the ordinal addition, multiplication, subtraction from left and division from left, respectively.
\item[$\bullet$]
$o$ is a unary polynomial time function represented as an $\mathcal{L}_{\PV}$-term. Its intended meaning is the function that maps the natural numbers to the representation of their order-types below $\alpha$. For instance, $o(0)$ is the least element of $\mathcal{O}$ while $o(1)$ is its
second least element. 
\item[$\bullet$]
$\omega^{x}$ is a \emph{primitive recursive} unary function represented as an $\mathcal{L}_{\PRA}$-term.
Its intended meaning is the function that maps the ordinal $\beta \prec \alpha$ to the ordinal $\omega^{\beta} \prec \alpha$.
\item[$\bullet$]
$\mathbf{0}$, $\mathbf{1}$ and $\omega$ are three numbers representing the ordinals zero, one and $\omega$, respectively.
\item[$\bullet$]
The structure $(\mathfrak{O}, \prec)$ is isomorphic to $(\alpha, \prec_{\alpha})$, where $\prec_{\alpha}$ is the order on $\alpha$.
\item[$\bullet$]
$\PV$ proves that $\prec$ is a total ordering on $\mathcal{O}$ with the minimum $\mathbf{0}$.
\item[$\bullet$]
$\PV$ proves that $\prec$ is discrete over $\mathcal{O}$, i.e.,
for all $\beta, \gamma \in \mathcal{O}$, if $\gamma \prec \beta+\mathbf{1}$ then either $\gamma \prec \beta$ or $ \gamma=\beta$.
\item[$\bullet$]
$\PV$ proves the associativity of the addition and multiplication, the left distributivity of multiplication over the addition, the neutrality of $\mathbf{0}$ for the addition, the neutrality of $\mathbf{1}$ for the multiplication and the identity $\mathbf{0}\beta=\beta \mathbf{0}=\mathbf{0}$.
\item
$\PV$ proves that the addition and the non-zero multiplication from left respect the order $\prec$, i.e., if $\delta \prec \gamma$ then $\beta + \delta \prec \beta + \gamma $ and if we also have $\beta \neq \mathbf{0}$, then $\beta \delta \prec \beta \gamma$.
\item
$\PV$ proves that the addition and multiplication from right respects $\preceq$, i.e., if $\delta \preceq \gamma$ then $\delta+\beta \preceq \gamma + \beta$ and $\delta \beta \preceq \gamma \beta$.
\item
$\PV$ proves the defining axioms of $\dotminus$, i.e.,
if $\alpha \prec \beta$ then $\alpha \dotminus \beta=\mathbf{0} $ and
if $\alpha \succeq \beta$ then $\alpha=\beta+(\alpha \dotminus \beta)$.
\item
$\PV$ proves the defining axioms of $d$, i.e.,
if $\beta \neq \mathbf{0}$, then
$\beta d(\alpha, \beta) \preceq \alpha$ and
$\alpha\dotminus \beta d(\alpha, \beta) \prec \beta$.
\item[$\bullet$]
$\PV$ proves that $o$ is an order-isomorphism between the natural numbers and the ordinals below $\omega$, mapping $0$ and $1$ to $\mathbf{0}$ and $\mathbf{1}$, respectively, i.e., $\PV$ proves $o(0)=\mathbf{0}$, $o(1)=\mathbf{1}$, $\forall x [\mathcal{O}(o(x)) \wedge o(x) \prec \omega]$, $\forall \beta \prec \omega \exists ! y \, o(y)=\beta$ and $\forall xy (x < y \leftrightarrow o(x) \prec o(y))$. Where there is no risk of confusion, we will use the numbers and their ordinal reinterpretations, interchangeably. For instance, we use $1$ for $\mathbf{1}$.
\item
$\PRA$ proves that $\omega^{\mathbf{0}}=1$ and $\omega^{\mathbf{1}}=\omega$. It also proves that $\omega^{\beta}$ respects $\preceq$ and maps the addition to the multiplication.
\item
If there is no $\gamma \in \mathcal{O}$ such that $\beta=\gamma+1$, then $\omega^{\beta}$ is the supremum of the set $\{\omega^{\gamma} \mid \gamma \prec \beta \}$, i.e., for any $\delta \in \mathcal{O}$, if $\omega^{\gamma} \preceq \delta$, for any $\gamma \prec \beta$, then $\omega^{\beta} \preceq \delta$.
\item
$\PRA$ proves that for every $\beta \in \mathcal{O}$, there is a unique expansion $\beta=\omega^{\gamma_1}+ \ldots + \omega^{\gamma_n}$ such that $\gamma_n \preceq \gamma_{n-1} \preceq \ldots \preceq \gamma_1$.
\end{itemize}
\end{definition}

\begin{remark}
Here are some remarks. First, notice that the relations of being a successor and a limit ordinal are both definable by the predicates $\exists \gamma (\beta=\gamma+1)$ and $\forall \gamma \prec \beta (\gamma+1 \prec \beta)$, respectively. It is also easy to see that $\PV$ can prove the dichotomy that for any $\beta \in \mathcal{O}$, it is either a successor or a limit. Secondly, using the compatibility of the order with the addition and the multiplication, one can easily prove in $\PV$ that if $\beta=\gamma+\delta=\gamma+\eta$, then $\beta \dotminus \gamma=\delta=\eta$. This observation proves that for any $\gamma \prec \beta$, the interval $(\mathbf{0}, \beta \dotminus \gamma)$ in $\mathcal{O}$ is in one-to-one correspondence with the interval $(\gamma, \beta)$, via the map $\delta \mapsto \gamma+\delta$. Similarly, $\PV$ proves that if $\gamma \neq \mathbf{0}$, then $\beta=\gamma \delta=\gamma \eta$ implies $d(\beta, \gamma)=\delta=\eta$. Therefore, $d(\gamma \delta, \gamma)=\delta$, for $\gamma \neq \mathbf{0}$. Thirdly, let us explain the discrepancy between the polynomial time character of the order, addition, multiplication, subtraction and division and the primitive recursive character of the function $x \mapsto \omega^{x}$ in our definition. For that purpose, first, pretend that our definition uses the primitive recursive functions and predicates and $\PRA$ everywhere when it actually uses polynomial time functions and predicates and $\PV$. Then, one can easily see that this primitive recursive version of our representation is just a mild extension of the primitive recursive (even elementary) ordinal representation employed in \cite{Fri}. (Their conditions are different, but it is easy to show that our axioms imply theirs). As we use a proof-theoretic result of \cite{Fri}, using the primitive recursive version of our definition is completely justified. However, there is another role for our ordinal representation. As it is clear, in this paper, we intend to address the lower complexity formulas and for that purpose, some basic ordinal arithmetic (up to addition and multiplication and hence subtraction and division from left) is required to be implemented in polynomial time. Therefore, we are forced to lower the complexity of some parts of the representation. However, as the use of the exponentiation is only restricted to the result from \cite{Fri} that we use as a black box here, we decided to lower the complexity up to the point we need and let the exponentiation parts intact. This way we can accept more ptime representations.
\end{remark}

Let $\beta \in \mathcal{O}$. By the axiom scheme $\TI(\prec_{\beta})$, we mean the transfinite induction up to the ordinal $\beta$, i.e., 
\[
\forall \gamma \prec \beta [\forall \delta \prec \gamma A(\delta) \to A(\gamma)] \to \forall \gamma \prec \beta A(\gamma),
\]
where $A$ can be any formula in $\mathcal{L}_{\PV}$. 
In \cite{Fri}, a refined method of ordinal analysis is provided showing that the $\Pi^0_2$-consequences of the theory $\PA+\bigcup_{\beta \in \mathcal{O}} \TI(\prec_{\beta})$ are actually provable in a smaller theory extending $\PRA$ with a weak form of transfinite induction stating that for any $\beta \prec \alpha$, there is no primitive recursive decreasing sequence of ordinals below $\beta$. For more, see \cite{Fri,Realm}.

\begin{theorem}\label{ConCutElim}
Let $\alpha$ be an ordinal and $\mathfrak{O}$ be its ptime representation. Then,
$
\PA+\bigcup_{\beta \in \mathcal{O}}\TI(\prec_{\beta}) \equiv_{\Pi^0_2} \PRA+ \bigcup_{\beta \in \mathcal{O}} \PRWO(\prec_{\beta})$,
where $\PRWO(\prec_{\beta})$ is the scheme
$
\forall \bar{x} \exists y [f(\bar{x}, y+1) \nprec f(\bar{x}, y) \vee \neg \mathcal{O}(f(\bar{x}, y)) \vee f(\bar{x}, y) \nprec \beta ]$, 
for any function symbol $f$ in $\mathcal{L}_{\PRA}$.
\end{theorem}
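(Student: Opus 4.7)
The plan is to prove the two inclusions separately; the right-to-left direction is a routine deduction, and the left-to-right direction reduces to Friedman's refined ordinal analysis from \cite{Fri} applied as a black box.

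For the direction $\PRA+\bigcup_{\beta \in \mathcal{O}} \PRWO(\prec_{\beta}) \vdash A \Rightarrow \PA+\bigcup_{\beta \in \mathcal{O}}\TI(\prec_{\beta})\vdash A$ (for any $A$, not just $\Pi^0_2$), I would first note that $\PRA$ embeds in $\PA$, since every primitive recursive function symbol is $\Sigma^0_1$-definable in $\PA$ with its defining equations and totality provable, and the quantifier-free induction of $\PRA$ is absorbed into the full induction of $\PA$. It then suffices to derive each instance $\PRWO(\prec_{\beta})$ from $\TI(\prec_{\beta})$. Given a primitive recursive $f$, assume for contradiction that for some $\bar x$ one has $f(\bar x, y+1) \prec f(\bar x, y)$, $\mathcal{O}(f(\bar x, y))$, and $f(\bar x, y) \prec \beta$ for every $y$. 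Apply $\TI(\prec_{\beta})$ to the formula $A(\gamma) \equiv \forall y \, f(\bar x, y) \neq \gamma$: if $A(\delta)$ holds for all $\delta \prec \gamma$ and yet $f(\bar x, y_0) = \gamma$ for some $y_0$, then $f(\bar x, y_0+1) \prec \gamma$ contradicts the induction hypothesis, so $A(\gamma)$ holds. Thus $A$ holds for all $\gamma \prec \beta$, contradicting $f(\bar x, 0) \prec \beta$.

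For the other direction, which is the substantial content, I would quote Friedman's theorem from \cite{Fri}, which establishes precisely such a $\Pi^0_2$-conservativity between $\PA$ augmented with transfinite induction along a primitive recursive well-ordering and a corresponding primitive recursive extension of $\PRA$ with a well-foundedness scheme. To deploy it here, our ordinal representation $\mathfrak{O}$ must be recognised as a legitimate instance of the representations treated in \cite{Fri}. As already explained in the remark preceding the theorem, by forgetting the distinction between polynomial time and primitive recursive functions and reading $\PV$ everywhere as $\PRA$, the list of axioms we impose is a mild strengthening of Friedman's axioms for his primitive recursive ordinal notation system: totality and discreteness of $\prec$, the compatibility of $+$, $\cdot$, $\dotminus$, $d$ with $\prec$, the Cantor normal form, and the recursive definition of $\omega^{(\cdot)}$ are all available. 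One checks by direct inspection that each axiom assumed by \cite{Fri} is either literally among our axioms or is an easy $\PRA$-consequence of them (in particular, the well-foundedness properties used in Friedman's proof come from the Cantor normal form clause).

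With this verification in hand, Friedman's theorem yields that every $\Pi^0_2$-consequence of $\PA+\bigcup_{\beta \in \mathcal{O}}\TI(\prec_{\beta})$ is derivable from $\PRA$ plus the scheme asserting that no primitive recursive sequence indexed by $\bar x, y$ is infinitely descending below any $\beta \in \mathcal{O}$, which is exactly $\bigcup_{\beta \in \mathcal{O}} \PRWO(\prec_{\beta})$ in our formulation. The only real obstacle is the bookkeeping step of matching our axiom list to Friedman's; there is no genuinely new ordinal analysis to perform, which is the whole point of using \cite{Fri} as a black box here and deferring the feasible refinement to the next section via a separate reformulation of the proofs obtained.
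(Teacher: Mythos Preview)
Your proposal is correct and matches the paper's treatment: the paper does not prove this theorem at all but states it as a result imported from \cite{Fri}, so your plan to cite Friedman's refined ordinal analysis as a black box for the hard direction is exactly what the paper does. Your explicit argument for the easy direction (deriving $\PRWO(\prec_\beta)$ from transfinite induction via $A(\gamma)\equiv\forall y\, f(\bar x,y)\neq\gamma$) is not given at this point in the paper, but the identical argument appears later as the proof of Lemma~\ref{PRWOtoTI}, so you are in full agreement with the paper's approach.
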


\subsection{A Polynomial-time Representation for $\epsilon_0$}\label{Notation}

In this subsection, we will recall the basics of the ptime notation system for the ordinal $\epsilon_0$, introduced in \cite{BBP}. Define $\mathcal{O}_{0}$ and $\prec_{0}$ inductively and simultaneously in the following way: $\mathcal{O}_{0}$ is the least set of expressions containing the empty string $\mathbf{0}$ and is closed under the operation $(\alpha_1, \ldots, \alpha_n) \mapsto \omega^{\alpha_1}a_1+ \ldots+\omega^{\alpha_n}a_n$, where $a_i \neq 0$ are natural numbers and $\alpha_n \prec_{0} \ldots \prec_{0} \alpha_2 \prec_{0} \alpha_1$ and set $\omega^{\alpha_1}a_1+ \ldots+\omega^{\alpha_n}a_n \prec_{0} \omega^{\beta_1}b_1+ \ldots+\omega^{\beta_m}b_m$, if there exists $i \leq min\{m,n\}$ such that $\alpha_j=\beta_j$ and $a_j=b_j$, for any $j \leq i$ and one of the following takes place:
\begin{itemize}
    \item[$\bullet$]
    $i=n<m$,
    \item[$\bullet$]
    $i < min\{m,n\}$ and $\alpha_{i+1} \prec_{0} \beta_{i+1}$
    \item[$\bullet$]
    $i < min\{m,n\}$ and $\alpha_{i+1} = \beta_{i+1}$ and $a_i < b_i$.
\end{itemize}
Using some efficient method of sequence
encoding, it is possible to arithmetize the set $\mathcal{O}_{0}$ and the predicate $\prec_{0}$. It is also possible to implement the arithmetization in a way that the length of the G\"{o}del number of $\alpha \in \mathcal{O}_{0}$ is proportional to the number of symbols in the expression $\alpha$. By this fact, \cite{BBP} shows that both $\mathcal{O}_{0}$ and $\prec_{0}$ are polynomial time computable and hence formalizable in $\PV$. (Technically, it uses a conservative extension of $\PV$, but the difference does not affect us here). We fix \emph{quantifier-free} predicates $\mathcal{O}_{0}(x)$ and $x \prec_{0} y$ to denote the formalized versions in the language $\mathcal{L}_{\PV}$.  In \cite{BBP}, it is shown that $\PV$ proves that $\prec$ is a total ordering on $\mathcal{O}_{0}$. It is clear that $\PV$ also proves that $\mathbf{0}$ is the minimum element of $\mathcal{O}_{0}$. 
Define $\mathbf{1}$ as $\omega^{\mathbf{0}}1$ and for $o$, consider the function that maps the number $n$ to $\omega^{\mathbf{0}}n$. Denote $\omega^{o(1)}$ by $\omega$. Then, we have $\omega^{\mathbf{0}}=\mathbf{1}$ and $\omega^{\mathbf{1}}=\omega$. The map $o$ is ptime and it is easy to prove in $\PV$ that $o$ is an order-isomorphism, i.e., $\PV \vdash \forall x [\mathcal{O}_{0}(o(x)) \wedge o(x) \prec \omega]$, $\PV \vdash \forall \alpha \prec \omega \exists ! y \, o(y)=\alpha$ and $\PV \vdash x < y \leftrightarrow o(x) \prec_{0} o(y)$. For $x \mapsto \omega^x$, use the evident function mapping the expression $\beta$ to the expression $\omega^{\beta}$ and note that it is clearly primitive recursive.\\
In the rest of this subsection, we will explain how to formalize the basic ordinal arithmetic in $\PV$, using the aforementioned representation. For that purpose, first consider the following equalities over the real ordinals below $\epsilon_0$. We assumed that the inputs are \emph{non-zero} as the operations with one zero input are trivial. These equalities make the computation of the addition, multiplication, subtraction from left and division from left possible, using the Cantor normal form of the ordinals. We will not provide a proof for these equalities as they are just simple computations, see \cite{takeuti2012introduction}.
\[
{\footnotesize
(\sum_{i=1}^n \omega^{\alpha_i}a_i)+(\sum_{j=1}^m \omega^{\beta_j}b_j)=
\begin{cases}
\sum_{i=1}^n \omega^{\alpha_i}a_i+\sum_{j=1}^m \omega^{\beta_j}b_j & \alpha_n \succ \beta_1 \\
\sum_{j=1}^m \omega^{\beta_j}b_j & \alpha_1 \prec \beta_1\\
\sum_{i=1}^{k} \omega^{\alpha_i}a_i+ \sum_{j=1}^m \omega^{\beta_j}b_j & \alpha_{k+1} \prec \beta_1 \prec \alpha_{k}\\
\sum_{i=1}^{k-1} \omega^{\alpha_i}a_i+\omega^{\alpha_k}(a_k+b_1) +\sum_{j=2}^m \omega^{\beta_j}b_j & \alpha_{k}=\beta_1
\end{cases} 
}
\]
\[
{\footnotesize
(\sum_{i=1}^n \omega^{\alpha_i}a_i)\dotminus(\sum_{j=1}^m \omega^{\beta_j}b_j)=
\begin{cases}
0 & \alpha_{k+1} \prec \beta_{k+1} \\
\sum_{i=k+1}^n \omega^{\alpha_i}a_i & \alpha_{k+1} \succ \beta_{k+1}\\
\omega^{\alpha_{k+1}}(a_{k+1}-b_{k+1})+\sum_{i=k+2}^n \omega^{\alpha_i}a_i & \alpha_{k+1} = \beta_{k+1}, a_{k+1}>b_{k+1}\\
0 & \alpha_{k+1} = \beta_{k+1}, a_{k+1}<b_{k+1}
\end{cases} 
}
\]
where $k$ is the maximum $i$ such that $\alpha_i=\beta_i$ and $a_i=b_i$, if there is any and otherwise $k=0$,
\[
{\small
(\sum_{i=1}^n \omega^{\alpha_i}a_i)(\sum_{j=1}^m \omega^{\beta_j}b_j)=
\begin{cases}
\sum_{j=1}^m \omega^{\alpha_1+\beta_j}b_j & \beta_m \succ 0 \\
\sum_{j=1}^{m-1} \omega^{\alpha_1+\beta_j}b_j+\omega^{\alpha_1}a_1b_m+\sum_{i=2}^n \omega^{\alpha_i}a_i & \beta_m = 0, m >1\\
\omega^{\alpha_1}a_1b_1+\sum_{i=2}^n \omega^{\alpha_i}a_i & \beta_m = 0, m=1
\end{cases} 
}
\]
\[
{\small
d(\sum_{i=1}^n \omega^{\alpha_i}a_i,\sum_{j=1}^m \omega^{\beta_j}b_j)=
\begin{cases}
0 & \alpha_1 \prec \beta_1  \\
\sum_{i=1}^k \omega^{\alpha_i \dotminus \beta_1}a_i  & \alpha_1 \succeq \beta_1, \alpha_{k} \neq \beta_1\\
\sum_{i=1}^{k-1} \omega^{\alpha_i \dotminus \beta_1}a_i+d(a_k,b_1)  & \alpha_1 \succeq \beta_1, \alpha_{k} = \beta_1, (*)\\
\sum_{i=1}^{k-1} \omega^{\alpha_i \dotminus \beta_1}a_i+(d(a_k,b_1)-1)  & \text{otherwise}
\end{cases} 
}
\]
where $k$ is the greatest $i$ such that $\alpha_i \succeq \beta_1$, $d(a_k,b_1)$ is the quotient of $a_k$ divided by $b_1$ and $(*)$ is the condition that $\sum_{i=k}^n \omega^{\alpha_i}a_i \succeq \omega^{\alpha_k}b_1d(a_k,b_1)+\sum_{j=2}^m \omega^{\beta_j}b_j$.
Note that to compute any of the operations, it is enough to do constant many comparisons and basic numerical computations, a search to find the maximum index that takes at most as long as the length of the inputs and at most $m$ or $n$ many applications of a ptime function. Hence, all the operations are ptime and hence representable in $\PV$. It is easy to see but tedious to show that all the claimed properties in Definition \ref{t3-4} hold. Therefore, the described data in this subsection defines a ptime representation for $\epsilon_0$ that we denote by $\mathfrak{O}_{0}$.

\section{Ordinal Flows and Arithmetic} \label{OrdinalFlowsandArithmetic}
Let $\alpha$ be an ordinal and  $\mathfrak{O}$ be its ptime representation. In this section, we develop a witnessing method for the theory $\PA+\bigcup_{\beta \in \mathcal{{O}}}\TI(\prec_{\beta})$. The section consists of three parts. First, in Subsection \ref{SectionOfTI}, we will introduce an auxiliary theory $\TI(\forall_1, \prec)$ with a transfinite induction on the universal formulas in the language of $\PV$. The system is powerful enough to interpret $\PRA+\bigcup_{\beta \in \mathcal{O}} \PRWO(\prec_{\beta})$ and hence proves all $\Pi^0_2$-theorems of $\PA+\bigcup_{\beta \in \mathcal{{O}}}\TI(\prec_{\beta})$. Then in Subsection \ref{SectionOfFlows}, we will provide a witnessing method for $\TI(\forall_1, \prec)$ that transforms the provability between two universal formulas in $\TI(\forall_1, \prec)$ to an ordinal-length sequence of $\PV$-provable implications. Finally, in Subsection \ref{OrdinalLocalSearch}, we use Herbrand's theorem, Theorem \ref{Herbrand}, to witness the implications in $\PV$ to provide a characterization for the low complexity theorems of $\PA+\bigcup_{\beta \in \mathcal{{O}}}\TI(\prec_{\beta})$.
\subsection{The System  $\TI(\forall_1, \prec)$} \label{SectionOfTI}

This subsection is devoted to the introduction and investigation of the auxiliary theory $\TI(\forall_1, \prec)$.

\begin{definition}
Define $\forall_1$ (resp., $\exists_1$) as the least set of $\mathcal{L}_{\PV}$-formulas containing all atomic formulas and their negations and closed under conjunction, disjunction and universal (resp. existential) quantifiers.
\end{definition}

Let $I\forall_1$ (resp. $I\exists_1$) be the theory extending $\PV$ by the $\forall_1$-induction (resp. $\exists_1$-induction) scheme, i.e.,
$A(0) \wedge \forall x (A(x) \to A(x+1)) \to \forall x A(x)$, for any $A(x) \in \forall_1$ (resp. $A(x) \in \exists_1$). Note that $I\exists_1=I\forall_1$. The proof uses the usual technique of using $\forall_1$-induction on $B(x)=\neg A(y \dotminus x)$ to prove $\exists_1$-induction on $A(y)$ and similarly for the other direction, see \cite{BussProofTheoryArithmetic}.

\begin{lemma}\label{PRAEmbedding}
For any primitive recursive function $f: \mathbb{N}^k \to \mathbb{N}$, there is a $\exists_1$-formula $D_f(\bar{x}, y)$ such that $I\exists_1 \vdash \forall \bar{x} \exists ! y D_f(\bar{x}, y)$ and $\mathbb{N} \vDash D_f(\bar{n},m)$ iff $f(\bar{n})=m$, for any $\bar{n}, m \in \mathbb{N}$.
\end{lemma}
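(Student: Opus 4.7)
The plan is to argue by induction on the construction of $f$ as a primitive recursive function. For the base functions (zero, successor, projections) there is nothing to do: each is polynomial time, so it has an $\mathcal{L}_{\PV}$-term $t_f$, and we take $D_f(\bar{x}, y)$ to be the atomic formula $t_f(\bar{x}) = y$. For composition $f(\bar{x}) = h(g_1(\bar{x}), \ldots, g_k(\bar{x}))$, given $\exists_1$ definitions $D_{g_i}$ and $D_h$ by the inductive hypothesis, I set
\[
D_f(\bar{x}, y) \;:=\; \exists z_1 \cdots \exists z_k \left( \bigwedge_{i=1}^k D_{g_i}(\bar{x}, z_i) \wedge D_h(z_1, \ldots, z_k, y) \right),
\]
which stays in $\exists_1$ by closure under conjunction and existential quantification, and whose totality and uniqueness in $I\exists_1$ follow immediately from those of the $D_{g_i}$ and $D_h$.

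The crux is the case of primitive recursion $f(0, \bar{x}) = g(\bar{x})$, $f(n+1, \bar{x}) = h(n, f(n, \bar{x}), \bar{x})$. Fix prenex forms $D_g(\bar{x}, y) \equiv \exists \bar{v}\, \psi_g(\bar{x}, y, \bar{v})$ and $D_h(a, b, \bar{x}, c) \equiv \exists \bar{u}\, \psi_h(a, b, \bar{x}, c, \bar{u})$ with $\psi_g$ and $\psi_h$ quantifier-free. The natural candidate for $D_f(n, \bar{x}, y)$ asserts the existence of a computation trace $s = \langle (s)_0, \ldots, (s)_n \rangle$, a sequence of Herbrand witnesses $w = \langle (w)_0, \ldots, (w)_{n-1} \rangle$ for the recursion steps, and a witness $\bar{v}$ for the base step. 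A direct rendering contains the bounded universal quantifier $\forall i < n\, \psi_h(i, (s)_i, \bar{x}, (s)_{i+1}, (w)_i)$, which is not syntactically $\exists_1$. To absorb it, I exploit the fact that $\mathcal{L}_{\PV}$ has a term for every polynomial time function: since any valid trace $s$ codes $n{+}1$ entries, it has bit length at least $n$, and hence iterating the quantifier-free check for $i < n$ runs in time polynomial in the combined length of the arguments. Therefore the verification predicate
\[
V(n, \bar{x}, y, s, w, \bar{v}) \equiv \big[s\text{ is a sequence of length }n{+}1\big] \wedge \psi_g(\bar{x}, (s)_0, \bar{v}) \wedge \bigwedge_{i < n} \psi_h(i, (s)_i, \bar{x}, (s)_{i+1}, (w)_i) \wedge (s)_n = y
\]
is polynomial time, so its characteristic function is an $\mathcal{L}_{\PV}$-term $\chi_V$, and I may set
\[
D_f(n, \bar{x}, y) \;:=\; \exists s\, \exists w\, \exists \bar{v}\, \bigl(\chi_V(n, \bar{x}, y, s, w, \bar{v}) = 1\bigr),
\]
which is manifestly $\exists_1$ and, in the standard model, expresses $f(n, \bar{x}) = y$.

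Totality and uniqueness are then checked in $I\exists_1$. Totality is proved by induction on $n$ applied to the $\exists_1$ formula $\exists y\, D_f(n, \bar{x}, y)$: the base uses totality of $D_g$ to produce the singleton trace, and the inductive step uses totality of $D_h$ to extend a trace by one further value and one further witness. Uniqueness is proved by induction applied to the $\forall_1$ formula $\forall i \leq n\, (s_1)_i = (s_2)_i$ for any two traces $s_1, s_2$ of two putative values $y_1, y_2$, using $\forall_1$-induction, which is available because $I\exists_1 = I\forall_1$; the base case $i = 0$ invokes uniqueness of $D_g$, the induction step invokes uniqueness of $D_h$, and specialising at $i = n$ gives $y_1 = y_2$. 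The main obstacle is the tension flagged above between the seemingly essential bounded universal quantifier in the history check and the purely existential shape required of an $\exists_1$ formula; its resolution is the observation that, over $\mathcal{L}_{\PV}$, that bounded universal is itself coded by an atomic predicate, because the sequence argument $s$ already carries enough bits to pay for the $n$ many iterations.
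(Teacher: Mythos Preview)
Your proposal is correct and follows essentially the same route as the paper: both argue by induction on the build-up of $f$, and in the primitive-recursion case both resolve the apparent $\forall_1$ obstruction in the trace check by observing that the number of iterations is bounded by the bit length of the sequence code, so the whole verification is a polynomial-time predicate and hence an $\mathcal{L}_{\PV}$-atom. The only cosmetic difference is that the paper carries a quantifier-free computation predicate $C_f(\bar{x}, w, y)$ through the induction and sets $D_f := \exists w\, C_f$ at the end, whereas you work with the $\exists_1$ formulas $D_f$ directly and pass to prenex form when needed; this changes nothing of substance.
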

\begin{proof}
For any primitive recursive function $f$, we provide a quantifier-free formula $C_f(\bar{x}, w, y) \in \mathcal{L}_{\PV}$ encoding that $w$ is a computation of $f$ with the input $\bar{x}$ and the output $y$. To that aim, we use recursion on the construction of $f$. The cases for the basic functions and composition are easy. For the recursion case, if $f(\bar{x}, y)$ is defined via recursive equations $f(\bar{x}, 0)=g(\bar{x})$ and $f(\bar{x}, y+1)=h(\bar{x}, y, f(\bar{x}, y))$, define $C_f(\bar{x}, y, \langle u, v \rangle, z)$ as $C_g(\bar{x}, u_0, v_0) \wedge \forall i \leq l(v) C_h(\bar{x}, i, v_i, u_{i+1}, v_{i+1}) \wedge v_{l(v)}=z$, where $v$ encodes the sequence $\{f(\bar{x}, i)\}_{i=0}^{l(v)}$, the number $l(v)$ is the length of this sequence and $u$ encodes the sequence of computations $\{u_{i}\}_{i=0}^{l(v)}$, where $u_0$ reads $\bar{x}$ and computes $v_0=f(\bar{x}, 0)$ and $u_{i+1}$ reads $\bar{x}$, $i$ and $f(\bar{x}, i)$ and computes $f(\bar{x}, i+1)$ via the function $h$. Note that the predicate $\forall i \leq l(v) C_h(\bar{x}, i, v_i, u_i, v_{i+1})$ is polynomial computable, as $l(v) \leq |v|$, where $|v|$ is the binary length of $v$. Hence there exists a polynomial time function symbol in $\PV$ like $F$ such that $\PV$ proves that $F(\bar{x}, u, v)=1$ iff $\forall i \leq l(v) C_h(\bar{x}, i, v_i, u_i, v_{i+1})$. Therefore, $C_f$ can be written in a quantifier-free form. Now, set $D_f(\bar{x}, y)=\exists w C_f(\bar{x}, w, y)$. It is clear that $D_f \in \exists_1$ and $\mathbb{N} \vDash D_f(\bar{n},m)$ iff $f(\bar{n})=m$, for any $\bar{n}, m \in \mathbb{N}$. Finally, the proof of the claim that $I\exists_1 \vdash \forall \bar{x} \exists ! y D_f(\bar{x},y)$ is similar to the similar claim in the representation of primitive recursive functions in $I\Sigma_1$.
\end{proof}

\begin{definition}\label{t4-1}
Define the theory $\TI(\forall_1, \prec)$ over $\mathcal{L}_{\PV}$ as the theory $\PV$ extended by the transfinite induction scheme $\forall \delta (\forall \gamma \prec \delta \; A(\gamma) \to A(\delta)) \to A(\theta)$,
for any $A(\gamma) \in \forall_1$ and any constant $\theta \in \mathcal{O}$.
\end{definition}

Note that $\TI(\forall_1, \prec)$ extends the theory $I\forall_1$ as $\TI(\forall_1, \prec)$ proves $\forall \delta \prec \omega (\forall \gamma \prec \delta \; A(\gamma) \to A(\delta)) \to \forall \delta \prec \omega A(\delta)$, for any $A \in \forall_1$. Using the function $o$ and the fact that it is an order-isomorphism between the numbers and the ordinals below $\omega$, we will have $\forall x (\forall y < x \; A(y) \to A(x)) \to \forall x A(x)$ which implies $A(0) \wedge \forall x (A(x) \to A(x+1)) \to \forall x A(x)$. Therefore, by Lemma \ref{PRAEmbedding}, $\TI(\forall_1, \prec)$ represents any primitive recursive function with an $\exists_1$-definition. As it is routine in arithmetic \cite{BussProofTheoryArithmetic}, this provides both $\forall_1$ and $\exists_1$ definitions for any atomic formula in $\mathcal{L}_{\PRA}$. Hence, it is possible to interpret any $\forall_1$-formula in $\mathcal{L}_{\PRA}$ as an $\forall_1$-formula in $\mathcal{L}_{\PV}$. Using that interpretation, we can pretend that $\TI(\forall_1, \prec)$ has a fresh function symbol for any primitive recursive function and the $\forall_1$-formulas in the new language are allowed in the transfinite induction. Moreover, we can also pretend that $\TI(\forall_1, \prec)$ extends the theory $\PRA$. The reason simply is that the equational defining axioms in $\PRA$ are all provable in $I\forall_1=I\exists_1$ and hence in $\TI(\forall_1, \prec)$, as they are actually encoded in the definition $D_f$ of $f$. For the quantifier-free induction of $\PRA$, as we have seen before, it is possible to use the isomorphism $o$ to prove the induction in $\TI(\forall_1, \prec)$.

\begin{lemma}\label{PRWOtoTI}
If $\PRA+\bigcup_{\beta \in \mathcal{O}} \PRWO(\prec_{\beta}) \vdash A$ then $\TI(\forall_1, \prec) \vdash A$, for any $A \in \mathcal{L}_{\PV}$.
\end{lemma}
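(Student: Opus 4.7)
The plan is to show that $\TI(\forall_1, \prec)$ proves every axiom of $\PRA + \bigcup_{\beta \in \mathcal{O}} \PRWO(\prec_\beta)$; the lemma then follows by logical consequence. The discussion preceding the statement already records that $\TI(\forall_1, \prec)$ contains $\PRA$ through the $\exists_1$-graphs $D_f$ supplied by Lemma \ref{PRAEmbedding}, so the substantive task is to derive each instance of the $\PRWO$-scheme inside $\TI(\forall_1, \prec)$.

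Fix $\beta \in \mathcal{O}$ and a primitive recursive symbol $f \in \mathcal{L}_{\PRA}$. Working inside $\TI(\forall_1, \prec)$, take an arbitrary tuple $\bar{x}$, abbreviate $g(y) := f(\bar{x}, y)$, and argue by contradiction from the negation
$$\forall y \, [\, g(y+1) \prec g(y) \,\wedge\, \mathcal{O}(g(y)) \,\wedge\, g(y) \prec \beta \,].$$
The formula on which we run transfinite induction is
$$A(\gamma) \;:=\; \forall y \, \forall \delta \, [\, \delta \preceq \gamma \,\to\, g(y) \neq \delta \,],$$
where $\forall \delta$ ranges over $\mathcal{O}$ per the notation convention of Definition \ref{t3-4}. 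Because the graph $D_g$ is $\exists_1$, the equality $g(y) = \delta$ is $\exists_1$ and its negation is $\forall_1$; since $\delta \preceq \gamma$ is quantifier-free and $\forall_1$ is closed under disjunction, negated atoms and universal quantification, $A(\gamma)$ is a bona fide $\forall_1$-formula in $\mathcal{L}_{\PV}$ with $\bar{x}$ as a parameter, hence eligible for transfinite induction in $\TI(\forall_1, \prec)$.

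The induction step $\forall \delta' \prec \gamma\, A(\delta') \to A(\gamma)$ is immediate. Given $y$ and $\delta \preceq \gamma$, suppose $g(y) = \delta$. If $\delta \prec \gamma$, then the IH instance $A(\delta)$ applied at $(y, \delta)$ directly yields $g(y) \neq \delta$. If $\delta = \gamma$, the descending hypothesis forces $g(y+1) \prec \gamma$ with $\mathcal{O}(g(y+1))$, so one may instantiate the IH at $\delta' := g(y+1)$ and then apply $A(g(y+1))$ to the pair $(y+1, g(y+1))$, producing $g(y+1) \neq g(y+1)$. Both cases are absurd, so $A(\gamma)$ holds. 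Applying transfinite induction with the constant $\theta := \beta$ now yields $A(\beta)$; instantiating $y := 0$ and $\delta := g(0)$ --- legitimate because $\mathcal{O}(g(0))$ and $g(0) \prec \beta$ --- produces $g(0) \neq g(0)$, the desired contradiction.

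The one delicate checkpoint is the $\forall_1$-class membership of $A$: it rests on the $\exists_1 / \forall_1$ duality for primitive recursive graphs just set up around Lemma \ref{PRAEmbedding}. Once that is secure, the rest is the schematic no-infinite-descent proof carried out uniformly in $f$, with transfinite induction invoked only at the single constant ordinal $\theta = \beta$.
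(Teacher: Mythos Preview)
Your proof is correct and follows essentially the same no-infinite-descent argument as the paper: both run transfinite induction on a $\forall_1$-formula asserting that a given ordinal is not hit by the sequence $y \mapsto f(\bar{x}, y)$, deriving a contradiction from the assumed infinite descent. Your cumulative choice $A(\gamma) = \forall y\, \forall \delta \preceq \gamma\, (g(y) \neq \delta)$ is a minor variant of the paper's pointwise $B(\gamma) = \forall y\, (f(\bar{x}, y) \neq \gamma)$, with the small advantage that the single application of the induction scheme at the constant $\theta = \beta$ directly delivers the contradiction without needing the informal step from $B(\theta)$ to $\forall \gamma \prec \beta\, B(\gamma)$.
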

\begin{proof}
Pretend $\TI(\forall_1, \prec)$ has a function symbol for any primitive recursive function, allowed in the $\forall_1$-formulas. As $\TI(\forall_1, \prec)$ extends $\PRA$, it is enough to prove $\TI(\forall_1, \prec) \vdash \PRWO(\prec_{\beta})$, for any $\beta \in \mathcal{O}$.
For the sake of contradiction, assume 
$\forall y [f(\bar{x}, y+1) \prec f(\bar{x}, y) \wedge \mathcal{O}(f(\bar{x}, y)) \wedge f(\bar{x}, y) \prec \beta ]$.
Set $B(\gamma, \bar{x})= \forall y (f(\bar{x}, y) \neq \gamma)$ and note that $B(\gamma, \bar{x}) \in \forall_1$. By transfinite induction, we prove $\forall \gamma \prec \beta \,B(\gamma, \bar{x})$. 
For that purpose, assume $\forall \delta \prec \gamma [\delta \prec \beta \to B(\delta, \bar{x})]$. Then, to prove $[\gamma \prec \beta \to B(\gamma, \bar{x})]$, if $f(\bar{x}, y) = \gamma$, for some $\gamma \prec \beta$, as $f(\bar{x}, y+1) \prec f(\bar{x}, y)$, we have $f(\bar{x}, y+1) \prec \gamma \prec \beta$. On the other hand, by $\forall \delta \prec \gamma [\delta \prec \beta \to B(\delta, \bar{x})]$, we know that none of the ordinals $\delta$ below $\gamma$ is in the form of $f(\bar{x}, z)$, which contradicts with $f(\bar{x}, y+1) \prec \gamma$. Hence, $[\gamma \prec \beta \to B(\gamma, \bar{x})]$. Therefore, $\forall \delta \prec \gamma [\delta \prec \beta \to B(\delta, \bar{x})]$ implies $[\gamma \prec \beta \to B(\gamma, \bar{x})]$. Hence, by transfinite induction, we have $\forall \gamma \prec \beta \, B(\gamma, \bar{x})$ which for $\gamma=f(\bar{x}, 0) \prec \beta$ implies $\forall y (f(\bar{x}, y) \neq f(\bar{x}, 0))$ which is a contradiction.
\end{proof}

\begin{corollary}\label{PAToTI}
$\PA+\bigcup_{\beta \in \mathcal{O}}\TI(\prec_{\beta}) \equiv_{\Pi^0_2} \TI(\forall_1, \prec)$.
\end{corollary}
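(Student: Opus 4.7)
The plan is to observe that this corollary is essentially a direct combination of the two previously stated results, namely Theorem \ref{ConCutElim} (Friedman's refined ordinal analysis, used as a black box) and Lemma \ref{PRWOtoTI} (the $\forall_1$-transfinite induction simulation of primitive recursive well-orderedness), plus one trivial inclusion. I would split the proof of the equivalence into its two directions and verify each separately.

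For the forward inclusion, I would show the stronger fact that any theorem $A$ (not just $\Pi^0_2$) of $\TI(\forall_1, \prec)$ is already a theorem of $\PA + \bigcup_{\beta \in \mathcal{O}} \TI(\prec_{\beta})$. For this it suffices to check that every axiom of $\TI(\forall_1, \prec)$ is provable in the ambient theory. The axioms of $\PV$ are included by the way $\PA$ is defined in the preliminaries. For the transfinite induction axiom $\forall \delta (\forall \gamma \prec \delta \, A(\gamma) \to A(\delta)) \to A(\theta)$ with $A \in \forall_1$ and $\theta \in \mathcal{O}$ a constant, I would use the instance $\TI(\prec_{\theta + \mathbf{1}})$ of the transfinite induction scheme (available since $\mathcal{O}$ is closed under successor and the scheme is unrestricted in the complexity of $A$): from the hypothesis $\forall \delta (\forall \gamma \prec \delta A(\gamma) \to A(\delta))$ we restrict to $\delta \prec \theta + \mathbf{1}$ and conclude $\forall \delta \prec \theta + \mathbf{1} \, A(\delta)$, which yields $A(\theta)$ since $\theta \prec \theta + \mathbf{1}$.

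For the backward inclusion, I would start from a $\Pi^0_2$-formula $A$ proved in $\PA + \bigcup_{\beta \in \mathcal{O}} \TI(\prec_{\beta})$ and apply Theorem \ref{ConCutElim} to get a proof of the same $A$ in $\PRA + \bigcup_{\beta \in \mathcal{O}} \PRWO(\prec_{\beta})$. Then I would invoke Lemma \ref{PRWOtoTI} to lift this proof into $\TI(\forall_1, \prec)$, yielding the desired conclusion.

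There is no real obstacle here; the proof is essentially a bookkeeping assembly of the two main technical results already established. All of the genuine content resides in Theorem \ref{ConCutElim}, where the cut-elimination-style ordinal analysis of $\PA + \bigcup_{\beta} \TI(\prec_{\beta})$ is packaged, and in Lemma \ref{PRWOtoTI}, where the descent-style application of transfinite induction on a $\forall_1$-formula $B(\gamma, \bar{x}) = \forall y (f(\bar{x}, y) \neq \gamma)$ was carried out. The role of this corollary is therefore to record the combined statement in a form convenient for Subsection \ref{SectionOfFlows}, where the witnessing of $\TI(\forall_1, \prec)$-proofs by ordinal flows will do the remaining work.
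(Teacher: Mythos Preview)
Your proposal is correct and follows essentially the same approach as the paper: one direction uses that $\PA+\bigcup_{\beta \in \mathcal{O}}\TI(\prec_{\beta})$ extends $\TI(\forall_1, \prec)$ (you spell out the instance $\TI(\prec_{\theta+\mathbf{1}})$ explicitly, while the paper just says the larger theory proves transfinite induction for all formulas), and the other direction chains Theorem~\ref{ConCutElim} with Lemma~\ref{PRWOtoTI} exactly as the paper does.
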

\begin{proof}
One direction is a consequence of the fact that $\PA+\bigcup_{\beta \in \mathcal{O}}\TI(\prec_{\beta})$ proves the transfinite induction for any formulas and hence extends the theory $\TI(\forall_1, \prec)$. The other direction is a consequence of Theorem \ref{ConCutElim} and Lemma \ref{PRWOtoTI}.
\end{proof}

\subsubsection{A proof system for $\TI(\forall_1, \prec)$} \label{ProofSystemForTI}
We now present a sequent calculus for the theory $\TI(\forall_1, \prec)$. By a sequent over $\mathcal{L}_{\PV}$, we mean an expression in the form $S=\Gamma \Rightarrow \Delta$, where $\Gamma$ and $\Delta$ are multisets of formulas in $\mathcal{L}_{\PV}$.
Define $\mathbf{LPV}$ as the usual system $\mathbf{LK}$ augmented with the equality axioms for atomic formulas and their negations and all quantifier-free \emph{theorems} of $\PV$ as the initial sequents:\\

\noindent \textbf{Axioms:}

\begin{center}
 \begin{tabular}{c c}
 \AxiomC{}
 \UnaryInfC{$\bot \Rightarrow $}
 \DisplayProof \hspace{15pt}
 &
 \AxiomC{}
 \UnaryInfC{$ \Rightarrow \top$}
 \DisplayProof 
 \\[3ex]
 \end{tabular}

 \begin{tabular}{c c c c c}
 \AxiomC{}
 \UnaryInfC{$P \Rightarrow P$}
 \DisplayProof \hspace{5pt}
 &
 \AxiomC{}
 \UnaryInfC{$\neg P \Rightarrow \neg P$}
 \DisplayProof \hspace{5pt}
 &
 \AxiomC{}
 \UnaryInfC{$P, \neg P \Rightarrow $}
 \DisplayProof \hspace{5pt}
 &
 \AxiomC{}
 \UnaryInfC{$ \Rightarrow P, \neg P$}
 \DisplayProof \hspace{5pt}
 &
 \AxiomC{}
 \UnaryInfC{$ \Rightarrow A$}
 \DisplayProof
 \\[3ex]
 \end{tabular}
 
 \begin{tabular}{c}
 \AxiomC{}
 \UnaryInfC{$ \Rightarrow t=t$}
 \DisplayProof
 \\[3ex]
 \AxiomC{}
 \UnaryInfC{$s_1=t_1, \ldots, s_n=t_n \Rightarrow f(\bar{s})=f(\bar{t})$}
 \DisplayProof
 \\[3ex]
 \AxiomC{}
 \UnaryInfC{$s_1=t_1, \ldots, s_n=t_n, Q(\bar{s}) \Rightarrow Q(\bar{t})$}
 \DisplayProof
 \\[3ex]
 \end{tabular}
 \end{center}
where $P$ ranges over all atomic formulas, $f$ ranges over all function symbols in the language, $Q$ ranges over all atmoic formulas or their negations, and $A$ ranges over all quantifier-free theorems of $\PV$.\\
 
\noindent \textbf{Structural Rules:}\\

 \begin{center}
 \begin{tabular}{c c}
 \AxiomC{$\Gamma, A, A \Rightarrow \Delta$}
  \RightLabel{\footnotesize$ Lc$}
 \UnaryInfC{$\Gamma, A \Rightarrow \Delta$}
 \DisplayProof \hspace{7pt}
 &
 \AxiomC{$\Gamma \Rightarrow A, A, \Delta$}
 \RightLabel{\footnotesize$Rc$}
 \UnaryInfC{$\Gamma \Rightarrow A, \Delta$}
 \DisplayProof \hspace{7pt}
 \\[4ex]
 \AxiomC{$\Gamma \Rightarrow \Delta$}
  \RightLabel{\footnotesize$ Lw$}
 \UnaryInfC{$\Gamma, A \Rightarrow \Delta$}
 \DisplayProof \hspace{5pt}
 &
 \AxiomC{$\Gamma \Rightarrow \Delta$}
 \RightLabel{\footnotesize$Rw$}
 \UnaryInfC{$\Gamma \Rightarrow A, \Delta$}
 \DisplayProof
 \\[4ex]
 \end{tabular}
 
\begin{tabular}{c c}
\AxiomC{$\Gamma \Rightarrow A, \Delta$}
\AxiomC{$\Pi, A \Rightarrow \Lambda$}
 \RightLabel{\footnotesize$cut$}
 \BinaryInfC{$\Gamma, \Pi \Rightarrow \Delta, \Lambda$}
 \DisplayProof
 \\[3ex]
 \end{tabular}
 \end{center}
 
\noindent \textbf{Logical Rules:}
\begin{center}
\begin{tabular}{c c}
 \AxiomC{$\Gamma, A_i \Rightarrow \Delta$}
 \RightLabel{\footnotesize$L \wedge$} 
 \LeftLabel{\footnotesize$i \in \{0, 1\}$} 
 \UnaryInfC{$\Gamma, A_0 \wedge A_1 \Rightarrow \Delta$}
 \DisplayProof
 &
 \AxiomC{$\Gamma \Rightarrow A, \Delta$}
 \AxiomC{$\Gamma \Rightarrow B, \Delta$}
 \RightLabel{\footnotesize$R \wedge$} 
 \BinaryInfC{$\Gamma \Rightarrow A \wedge B, \Delta$}
 \DisplayProof
  \\[4ex]
 \AxiomC{$\Gamma, A \Rightarrow \Delta$}
 \AxiomC{$\Gamma, B \Rightarrow \Delta$}
 \RightLabel{\footnotesize$L \vee$} 
 \BinaryInfC{$\Gamma, A \vee B \Rightarrow \Delta$}
 \DisplayProof
 &
 \AxiomC{$\Gamma \Rightarrow A_i, \Delta$}
 \RightLabel{\footnotesize$R \vee$} 
 \LeftLabel{\footnotesize$i \in \{0, 1\}$} 
 \UnaryInfC{$\Gamma \Rightarrow A_0 \vee A_1, \Delta$}
 \DisplayProof
 \\[4ex]
 \AxiomC{$\Gamma, A(t) \Rightarrow \Delta$}
 \RightLabel{\footnotesize$L \forall$}
 \UnaryInfC{$\Gamma, \forall x A(x) \Rightarrow \Delta$}
 \DisplayProof
 &
 \AxiomC{$\Gamma \Rightarrow A(y), \Delta$}
 \RightLabel{\footnotesize$R \forall$}
 \UnaryInfC{$\Gamma \Rightarrow \forall x A(x), \Delta$}
 \DisplayProof
  \\[4ex]
 \AxiomC{$\Gamma, A(y) \Rightarrow \Delta$}
 \RightLabel{\footnotesize$L \exists$}
 \UnaryInfC{$\Gamma, \exists x A(x) \Rightarrow \Delta$}
 \DisplayProof
 &
 \AxiomC{$\Gamma \Rightarrow A(t), \Delta$}
 \RightLabel{\footnotesize$R \exists$}
 \UnaryInfC{$\Gamma \Rightarrow \exists x A(x), \Delta$}
 \DisplayProof
 \\[4ex]
	\end{tabular}
\end{center}
In the rules $(R\forall)$ and $(L\exists)$, the variable $y$ should not appear in the consequence. 
Adding the rule
\begin{center}
    	\begin{tabular}{c}
\AxiomC{$\Gamma, \forall \gamma \prec \delta \; A(\gamma) \Rightarrow \Delta, A(\delta)$}
\RightLabel{\footnotesize$Ind_{\alpha}$} 
		\UnaryInfC{$\Gamma \Rightarrow \Delta, A(\theta)$}
		\DisplayProof
	\end{tabular}
\end{center}
to $\mathbf{LPV}$, we get $\mathbf{G}_0$. Note that in $(Ind_{\alpha})$, the variable $\delta$ should not appear in the consequence. Moreover, the constant $\theta \in \mathcal{O}$ is arbitrary and can take any value. For more on the proof theory of first-order theories and specially arithmetic, see \cite{BussProofTheory,BussProofTheoryArithmetic}.\\
By the usual cut reduction method \cite{BussProofTheory,BussProofTheoryArithmetic}, it is easy to prove that for any $\Gamma \cup \Delta \subseteq \forall_1$, if $\Gamma \Rightarrow \Delta$ is provable in $\TI(\forall_1, \prec)$ (resp., $\PV$), then it has a $\mathbf{G}_0$-proof (resp. $\mathbf{LPV}$-proof) consisting only of $\forall_1$-formulas.
For some practical reasons, we simplify the system $\mathbf{G}_0$ by changing the cut and the induction rules to the weak cut and weak induction rules, respectively:
\begin{center}
	\begin{tabular}{c c}
	    \AxiomC{$\Gamma \Rightarrow A$}
	    \AxiomC{$A \Rightarrow \Delta$}
	    \RightLabel{\footnotesize$wCut$} 
		\BinaryInfC{$\Gamma \Rightarrow \Delta$}
		\DisplayProof \hspace{7pt}
		&
		\AxiomC{$\Gamma, \forall \gamma \prec \delta \; A(\gamma) \Rightarrow \forall \gamma \prec \delta+1 \; A(\gamma)$}
	    \RightLabel{\footnotesize$wInd_{\alpha}$} 
		\UnaryInfC{$\Gamma \Rightarrow A(\theta)$}
		\DisplayProof
	\end{tabular}
\end{center}	
Denote this system by $\mathbf{G}_1$. Note that the difference between $(Ind_{\alpha})$ and $(wInd_{\alpha})$ is that in the latter $\Delta$ is omitted and $A(\delta)$ is replaced by $\forall \gamma \prec \delta+1 \; A(\gamma)$.  
\begin{lemma}\label{CutConsequence}
For any $\Gamma \cup \Delta \subseteq \forall_1$, if $\TI(\forall_1, \prec) \vdash \bigwedge \Gamma \rightarrow \bigvee \Delta$, then $\Gamma \Rightarrow \Delta$ has a $\mathbf{G}_1$-proof only consisting of $\forall_1$-formulas. 
\end{lemma}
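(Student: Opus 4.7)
The plan is two-stage. First, by the partial cut elimination for $\mathbf{G}_0$ stated just before the lemma, $\TI(\forall_1, \prec) \vdash \bigwedge \Gamma \to \bigvee \Delta$ yields a $\mathbf{G}_0$-proof of $\Gamma \Rightarrow \Delta$ all of whose formulas lie in $\forall_1$. I then translate this $\mathbf{G}_0$-proof into a $\mathbf{G}_1$-proof while staying inside $\forall_1$. The invariant maintained is that each $\mathbf{G}_0$-derivable intermediate sequent $\Pi \Rightarrow \Lambda$ is simulated by a $\mathbf{G}_1$-derivation of its fully contracted form $\bigwedge \Pi \Rightarrow \bigvee \Lambda$, with a single formula on each side, still in $\forall_1$ by closure of $\forall_1$ under $\wedge$ and $\vee$. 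Once the contracted form of $\Gamma \Rightarrow \Delta$ is obtained, the multiset form $\Gamma \Rightarrow \Delta$ is recovered by two weak cuts: with the cut-free $\Gamma \Rightarrow \bigwedge \Gamma$ assembled from axioms $\gamma_i \Rightarrow \gamma_i$ and iterated $R\wedge$, and with the cut-free $\bigvee \Delta \Rightarrow \Delta$ assembled from axioms $\delta_j \Rightarrow \delta_j$, $Rw$, and iterated $L\vee$.

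Axioms and the structural, equality, and logical rules of $\mathbf{G}_0$ translate straightforwardly under this invariant, sometimes with the help of cut-free derivations of classical propositional tautologies (such as $(A \vee C) \wedge (B \vee C) \Rightarrow (A \wedge B) \vee C$) that mediate between the contracted forms of premise and conclusion. The essential work lies in the cut and induction cases. For a $\mathbf{G}_0$-cut on premises $\Gamma \Rightarrow A, \Delta$ and $A, \Pi \Rightarrow \Lambda$, the IH supplies $\bigwedge \Gamma \Rightarrow A \vee \bigvee \Delta$ and $A \wedge \bigwedge \Pi \Rightarrow \bigvee \Lambda$, and the target is $\bigwedge \Gamma \wedge \bigwedge \Pi \Rightarrow \bigvee \Delta \vee \bigvee \Lambda$. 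I realize this via a single weak cut on the $\forall_1$-formula $(A \vee \bigvee \Delta) \wedge \bigwedge \Pi$. The left half $\bigwedge \Gamma \wedge \bigwedge \Pi \Rightarrow (A \vee \bigvee \Delta) \wedge \bigwedge \Pi$ comes from IH1, the $\bigwedge \Pi \Rightarrow \bigwedge \Pi$ axiom, $Lw$, $R\wedge$, then iterated $L\wedge$ and $Lc$ to collapse duplicated conjuncts on the left. The right half $(A \vee \bigvee \Delta) \wedge \bigwedge \Pi \Rightarrow \bigvee \Delta \vee \bigvee \Lambda$ is built by unpacking IH2 to $A, \bigwedge \Pi \Rightarrow \bigvee \Lambda$ (via $R\wedge$ and a preliminary weak cut), weakening to $A, \bigwedge \Pi \Rightarrow \bigvee \Delta \vee \bigvee \Lambda$ by $R\vee$, combining by $L\vee$ with the trivially derived $\bigvee \Delta, \bigwedge \Pi \Rightarrow \bigvee \Delta \vee \bigvee \Lambda$, and finally collapsing by $L\wedge$ and $Lc$.

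The induction rule is the delicate case. Given the $\mathbf{G}_0$-premise $\Gamma, \forall \gamma \prec \delta A(\gamma) \Rightarrow \Delta, A(\delta)$, the IH provides $P \wedge \forall \gamma \prec \delta A(\gamma) \Rightarrow D \vee A(\delta)$, where $P := \bigwedge \Gamma$ and $D := \bigvee \Delta$, with $D$ free of $\delta$ by the eigenvariable condition. I use the new induction invariant $B(\gamma) := D \vee A(\gamma)$, still in $\forall_1$, and aim for an $(wInd_\alpha)$-application on $B$; its conclusion $P \Rightarrow B(\theta) = D \vee A(\theta)$ is precisely the target. The $(wInd_\alpha)$-premise to derive is $P \wedge \forall \gamma \prec \delta B(\gamma) \Rightarrow \forall \gamma \prec \delta+1 B(\gamma)$. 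The key auxiliary ingredient is the classical tautology $\forall \gamma \prec \delta B(\gamma) \Rightarrow D \vee \forall \gamma \prec \delta A(\gamma)$, derivable cut-free in $\mathbf{G}_1$ by $L\forall$, $L\vee$, the atomic axiom $\Rightarrow y \prec \delta, \neg(y \prec \delta)$, $R\vee$, and $R\forall$, exploiting that $D$ is $\gamma$-free. Weak-cutting this tautology against a case-split derivation built from the IH on the $\forall_1$-formula $P \wedge (D \vee \forall \gamma \prec \delta A(\gamma))$, using the same collapsing trick as in the cut case, yields $P \wedge \forall \gamma \prec \delta B(\gamma) \Rightarrow B(\delta)$. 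Finally, $R\forall$ on a fresh $\gamma'$, together with the $\PV$-provable discreteness $\gamma' \prec \delta+1 \leftrightarrow \gamma' \prec \delta \vee \gamma' = \delta$ (an $\mathbf{LPV}$-axiom as a quantifier-free $\PV$-theorem), gives the $(wInd_\alpha)$-premise by case split, after which $(wInd_\alpha)$ concludes with $P \Rightarrow D \vee A(\theta)$, the contracted target.

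The main obstacle is that weak cut refuses to mingle side formulas with the cut formula, so a $\mathbf{G}_0$-cut's multiset conclusion cannot be mirrored directly. The remedy is to always reduce to single-formula-per-side sequents $\bigwedge \Pi \Rightarrow \bigvee \Lambda$, absorb the side contexts into the cut formula itself via $\wedge$ and $\vee$, and collapse the duplicate conjuncts that arise on the left by $L\wedge$ combined with $Lc$. Closure of $\forall_1$ under $\wedge$ and $\vee$ is what guarantees that every manufactured cut formula stays within the class we are restricted to.
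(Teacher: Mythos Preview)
Your proposal is correct and follows essentially the same approach as the paper. Both arguments hinge on the same two ideas: absorb the side context into the cut formula (respectively, into the induction formula via $B(\gamma)=A(\gamma)\vee\bigvee\Delta$) so that the weak rules suffice, and invoke the quantifier distribution $\forall\gamma\prec\delta\,(D\vee A(\gamma))\Rightarrow D\vee\forall\gamma\prec\delta\,A(\gamma)$ as the key auxiliary sequent. The only difference is organizational: the paper shows once and for all that full cut and $Ind_\alpha$ are \emph{derived rules} of $\mathbf{G}_1$ over $\forall_1$-formulas (so the $\mathbf{G}_0$-proof transfers rule by rule with no further work), whereas you carry the single-formula invariant $\bigwedge\Pi\Rightarrow\bigvee\Lambda$ through the whole translation. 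One small remark: your claim that the logical rules ``translate straightforwardly'' glosses over the $R\forall$ case, which needs exactly the same first-order distribution tautology you later spell out for induction; this is not a gap, just an omission in the exposition.
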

\begin{proof}
By a $\forall_1$-proof in $\mathbf{G}_1$ (resp. $\mathbf{LPV}$), we mean a proof in $\mathbf{G}_1$ (resp. $\mathbf{LPV}$) consisting only of $\forall_1$-formulas. We show that the cut rule and the induction rule (over $\forall_1$-formulas) are derivable in $\mathbf{G}_1$ (by a $\forall_1$-proof). We only investigate the harder case of $\forall_1$-proofs. The other is the same omitting the restrictions everywhere.\\
For cut, consider the following proof-tree in $\mathbf{G}_1$, where the double lines mean simple omitted proofs in $\mathbf{G}_1$. The tree proves $\Gamma, \Sigma \Rightarrow \Lambda, \Delta$ from $\Gamma \Rightarrow A, \Delta$ and $\Sigma, A \Rightarrow \Lambda$.
\begin{center}
	\begin{tabular}{c}
	\AxiomC{ }
	\doubleLine
	\UnaryInfC{$\Sigma \Rightarrow \bigwedge \Sigma$}
	\doubleLine
	\UnaryInfC{$\Gamma, \Sigma \Rightarrow \bigwedge \Sigma, \Delta$}
	    \AxiomC{$\Gamma \Rightarrow A, \Delta$}
	    \doubleLine
		\UnaryInfC{$\Gamma, \Sigma \Rightarrow A, \Delta$}
		\BinaryInfC{$\Gamma, \Sigma \Rightarrow A \wedge \bigwedge \Sigma, \Delta$}
		\doubleLine
		\UnaryInfC{$\Gamma, \Sigma \Rightarrow (A \wedge \bigwedge \Sigma) \vee \bigvee \Delta$}

     \AxiomC{ }
    \doubleLine
	\UnaryInfC{$\bigvee \Delta \Rightarrow \Delta$}
	\UnaryInfC{$\bigvee \Delta \Rightarrow \Lambda, \Delta$}
	    \AxiomC{$\Sigma, A \Rightarrow \Lambda$}
	    \doubleLine
		\UnaryInfC{$A \wedge \bigwedge \Sigma \Rightarrow \Lambda$}
		\doubleLine
		\UnaryInfC{$A \wedge \bigwedge \Sigma \Rightarrow \Lambda, \Delta$}
		\BinaryInfC{$(A \wedge \bigwedge \Sigma) \vee \bigvee \Delta \Rightarrow \Lambda, \Delta$}
		\RightLabel{\footnotesize$wCut$} 
		\BinaryInfC{$\Gamma, \Sigma \Rightarrow \Lambda, \Delta$}
		\DisplayProof
	\end{tabular}
\end{center}
Note that the simulation of the cut rule in $\mathbf{G}_1$ implies that $\mathbf{G}_1$ is as powerful as $\mathbf{LPV}$. It also transforms a $\forall_1$-proof in $\mathbf{LPV}$ to a $\forall_1$-proof in $\mathbf{G}_1$. For the induction rule, consider the following proof-tree proving $\Gamma \Rightarrow A(\theta), \Delta$ from $\Gamma, \forall \gamma \prec \delta \; A(\gamma) \Rightarrow A(\delta), \Delta$:
\begin{center}
	\begin{tabular}{c}
	\AxiomC{ }
	    \doubleLine
	    \UnaryInfC{$\Gamma, \bigvee \Delta \Rightarrow [A(\delta) \vee \bigvee \Delta]$}
	
	    \AxiomC{$\Gamma, \forall \gamma \prec \delta \; A(\gamma) \Rightarrow A(\delta), \Delta$}
	    \doubleLine
	    \UnaryInfC{$\Gamma, \forall \gamma \prec \delta \; A(\gamma) \Rightarrow [A(\delta) \vee \bigvee \Delta]$}
	    \BinaryInfC{$\Gamma, [\forall \gamma \prec \delta \; A(\gamma)] \vee \bigvee \Delta \Rightarrow [A(\delta) \vee \bigvee \Delta]$}
	    \doubleLine
	    \RightLabel{$*$}
	    \UnaryInfC{$\Gamma, \forall \gamma \prec \delta \, [A(\gamma) \vee \bigvee \Delta] \Rightarrow [A(\delta) \vee \bigvee \Delta]$}
	    \doubleLine
	    \RightLabel{$**$}
	   	\UnaryInfC{$\Gamma, \forall \gamma \prec \delta \, [A(\gamma) \vee \bigvee \Delta] \Rightarrow \forall \gamma \prec \delta+1 \, [A(\gamma) \vee \bigvee \Delta]$}
	   	 \RightLabel{\footnotesize$wInd$} 
	   	\UnaryInfC{$\Gamma \Rightarrow A(\theta) \vee \bigvee \Delta$}
	   	 \RightLabel{$\dagger$}
		\UnaryInfC{$\Gamma \Rightarrow A(\theta), \Delta$}
		\DisplayProof
	\end{tabular}
\end{center}
where $(*)$ is the result of a cut with the sequent $\forall \gamma \prec \delta \, [A(\gamma) \vee \bigvee \Delta]  \Rightarrow [\forall \gamma \prec \delta \; A(\gamma)] \vee \bigvee \Delta$ which has a proof in $\mathbf{LPV}$ and hence a $\forall_1$-proof in $\mathbf{LPV}$ and by the observation we have just made, a $\forall_1$-proof in $\mathbf{G}_1$. Note that the use of cut is allowed as we showed its derivability in $\mathbf{G}_1$. Moreover, $(**)$ is the result of a cut with the $\PV$-provable sequent $[A(\delta) \vee \bigvee \Delta], \forall \gamma \prec \delta \, [A(\gamma) \vee \bigvee \Delta] \Rightarrow \forall \gamma \prec \delta+1 \, [A(\gamma) \vee \bigvee \Delta]$. The latter is provable in $\mathbf{LPV}$. Therefore, it has a $\forall_1$-proof in $\mathbf{LPV}$ and hence in $\mathbf{G}_1$. Finally, $\dagger$ is the result of a cut with 
$ A(\theta) \vee \bigvee \Delta \Rightarrow A(\theta), \Delta$ that has a trivial $\forall_1$-proof.
\end{proof}

\subsection{Ordinal Flows} \label{SectionOfFlows}

In this subsection, we will witness $\TI(\forall_1, \prec)$-provable implications between $\forall_1$-formulas by a sequence of $\beta $ many $\PV$-provable implications, for some $\beta \in \mathcal{O}$. 
\begin{definition}\label{OrdinalFlow}
Let $A(\bar{x}), B(\bar{x}) \in \forall_1$. A pair $(H(\gamma, \bar{x}), \beta)$ of a $\forall_1$-formula and $\beta \in \mathcal{O}$ such that $\beta \succeq 1$ is called an \emph{$\alpha$-flow} from $A(\bar{x})$ to $B(\bar{x})$, if:
\begin{description}
\item[$\bullet$]
$\PV \vdash A(\bar{x}) \leftrightarrow H(0, \bar{x})$.
\item[$\bullet$]
$\PV \vdash \forall \; 1 \preceq \delta \preceq \beta \; [\forall \gamma \prec \delta \; H(\gamma, \bar{x}) \rightarrow H(\delta, \bar{x})]$.
\item[$\bullet$]
$\PV \vdash H(\beta, \bar{x}) \leftrightarrow B(\bar{x})$.
\end{description}
We denote the existence of an $\alpha$-flow from $A(\bar{x})$ to $B(\bar{x})$ by $A(\bar{x}) \rhd_{\alpha} B(\bar{x})$. For any multisets $\Gamma$ and $\Delta$ of $\forall_1$-formulas, by $\Gamma \rhd_{\alpha} \Delta$, we mean $\bigwedge \Gamma \rhd_{\alpha} \bigvee \Delta$.
\end{definition}
In order to use $\alpha$-flows to witness the proofs in $\TI(\forall_1, \prec)$, we will develop a high level calculus for this new notion, implemented in the following series of lemmas.

\begin{lemma}\label{ImplicationToFlow} Let $A(\bar{x}), B(\bar{x}), C(\bar{x}) \in \forall_1$. Then:
\begin{description}
 \item[$(i)$]
If $\PV \vdash A(\bar{x}) \to B(\bar{x})$, then $A(\bar{x}) \rhd_{\alpha} B(\bar{x})$.
 \item[$(ii)$]
If $A(\bar{x}) \rhd_{\alpha} B(\bar{x}) $, then $A(\bar{x}) \circ C(\bar{x}) \rhd_{\alpha} B(\bar{x}) \circ C(\bar{x}) $, for any $\circ \in \{\wedge, \vee\}$.
\end{description}
\end{lemma}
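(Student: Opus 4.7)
My plan is to exhibit an explicit flow in each case and verify the three conditions of Definition \ref{OrdinalFlow} directly.

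For (i), take $\beta := \mathbf{1}$ and set
\[ H(\gamma, \bar{x}) := (\gamma = \mathbf{0} \wedge A(\bar{x})) \vee (\gamma \neq \mathbf{0} \wedge B(\bar{x})). \]
Since $\gamma = \mathbf{0}$ is atomic and $\gamma \neq \mathbf{0}$ is its negation, and $A, B \in \forall_1$, closure of $\forall_1$ under $\wedge$ and $\vee$ keeps $H$ in $\forall_1$. The equivalences $H(\mathbf{0}, \bar{x}) \leftrightarrow A(\bar{x})$ and $H(\mathbf{1}, \bar{x}) \leftrightarrow B(\bar{x})$ hold in $\PV$ by purely propositional reasoning after evaluating the quantifier-free guards. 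The only $\delta$ with $\mathbf{1} \preceq \delta \preceq \mathbf{1}$ is $\delta = \mathbf{1}$, and using that $\mathbf{0}$ is the $\prec$-minimum of $\mathcal{O}$ and that $\prec$ is discrete, $\PV$ identifies $\forall \gamma \prec \mathbf{1}\, H(\gamma, \bar{x})$ with $H(\mathbf{0}, \bar{x})$. The induction step then collapses to the hypothesis $\PV \vdash A(\bar{x}) \to B(\bar{x})$.

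For (ii), given an $\alpha$-flow $(H(\gamma, \bar{x}), \beta)$ from $A$ to $B$, I propose to carry $C$ along and set $H'(\gamma, \bar{x}) := H(\gamma, \bar{x}) \circ C(\bar{x})$, reusing the same ordinal $\beta$. Membership of $H'$ in $\forall_1$ and the two boundary equivalences $A(\bar{x}) \circ C(\bar{x}) \leftrightarrow H'(\mathbf{0}, \bar{x})$ and $B(\bar{x}) \circ C(\bar{x}) \leftrightarrow H'(\beta, \bar{x})$ are immediate from the corresponding data for $H$. For the induction clause in the conjunction case, the antecedent $\forall \gamma \prec \delta\,(H(\gamma, \bar{x}) \wedge C(\bar{x}))$ splits in $\PV$ into $\forall \gamma \prec \delta\, H(\gamma, \bar{x})$, which the flow property for $H$ converts into $H(\delta, \bar{x})$, together with $C(\bar{x})$, obtained by instantiating the antecedent at $\gamma = \mathbf{0}$, which is $\prec \delta$ because $\delta \succeq \mathbf{1}$. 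For the disjunction case, I argue classically in $\PV$ by cases on $C(\bar{x})$: if $C(\bar{x})$ holds, then $H'(\delta, \bar{x})$ follows immediately; if $\neg C(\bar{x})$, then $\forall \gamma \prec \delta\,(H(\gamma, \bar{x}) \vee C(\bar{x}))$ collapses to $\forall \gamma \prec \delta\, H(\gamma, \bar{x})$ and the flow property for $H$ again delivers $H(\delta, \bar{x})$.

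The only real obstacle is bookkeeping: one must check that the distribution of a bounded ordinal universal quantifier over a disjunct independent of the bound variable, the extraction of $C$ from the conjunction via non-emptiness of $\{\gamma : \gamma \prec \delta\}$ at $\delta \succeq \mathbf{1}$, and the classical case split on the $\forall_1$-formula $C(\bar{x})$ are all genuinely available at the level of $\PV$. These are standard first-order manipulations and introduce no new axioms, so the same ordinal bound $\beta$ from the input flow suffices in (ii), and $\beta = \mathbf{1}$ works in (i).
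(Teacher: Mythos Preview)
Your proof is correct and follows essentially the same approach as the paper. For $(i)$ the paper chooses the variant $H(\gamma,\bar{x})=(\gamma=0\to A(\bar{x}))\wedge(\gamma=1\to B(\bar{x}))$ instead of your disjunctive form, but the two are interchangeable; for $(ii)$ the paper also takes $H'(\gamma,\bar{x})=H(\gamma,\bar{x})\circ C(\bar{x})$ with the same $\beta$, treats only the conjunction case explicitly, and declares the disjunction case ``similar'' without spelling out the classical case split you provide.
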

\begin{proof}
For $(i)$, set $\beta=1$ and $H(\gamma, \bar{x})=(\gamma=0 \to A(\bar{x})) \wedge (\gamma=1 \to B(\bar{x}))$. It is clear that $\PV \vdash H(0, \bar{x}) \leftrightarrow A(\bar{x})$ and $\PV \vdash H(1, \bar{x}) \leftrightarrow B(\bar{x})$. As $\PV \vdash A(\bar{x}) \to B(\bar{x})$, we can see that $(H(\gamma, \bar{x}), \beta)$ is an $\alpha$-flow from $A(\bar{x})$ to $B(\bar{x})$.\\
For $(ii)$, we only prove the conjunction case. The disjunction case is similar. Since $A(\bar{x}) \rhd_{\alpha} B(\bar{x})$, by Definition \ref{OrdinalFlow}, there exist an ordinal $\beta \succeq 1$ and a formula $H(\gamma, \bar{x}) \in \forall_1$ satisfying the conditions in Definition \ref{OrdinalFlow}. Set $I(\gamma, \bar{x})=H(\gamma, \bar{x}) \wedge C(\bar{x})$ and note that $I(\gamma, \bar{x}) \in \forall_1$. It is easy to see that the pair $(I(\gamma, \bar{x}), \beta)$ is an $\alpha$-flow from $A(\bar{x}) \wedge C(\bar{x})$ to $B(\bar{x}) \wedge C(\bar{x})$, as the $\PV$-provability of $\forall \; 1 \preceq \delta \preceq \beta \; [\forall \gamma \prec \delta \; H(\gamma, \bar{x}) \rightarrow H(\delta, \bar{x})]$ implies the $\PV$-provability of $\forall \; 1 \preceq \delta \preceq \beta \; [\forall \gamma \prec \delta \; (H(\gamma, \bar{x}) \wedge C(\bar{x})) \rightarrow (H(\delta, \bar{x}) \wedge C(\bar{x}))]$.
\end{proof}

In the next lemma, we glue $\alpha$-flows together to construct longer $\alpha$-flows. Notice that the proof heavily uses the fact that the operations $\{+, \dotminus, \cdot, d\}$ and their basic properties are representable in $\PV$.

\begin{lemma}\label{Gluing}
\begin{description}
\item[$(i)$]
If $A(\bar{x}) \rhd_{\alpha} B(\bar{x}) $ and $ B(\bar{x}) \rhd_{\alpha} C(\bar{x})$, then $A(\bar{x}) \rhd_{\alpha} C(\bar{x})$.
\item[$(ii)$]
If $\Gamma, \forall \gamma \prec \delta \; A(\gamma, \bar{x}) \rhd_{\alpha} \forall \gamma \prec \delta+1 \; A(\gamma, \bar{x})$, then $\Gamma \rhd_{\alpha} A(\theta, \bar{x})$, for any $\theta \in \mathcal{O}$.
\end{description}
\end{lemma}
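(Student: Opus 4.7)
My plan is to construct the required flows explicitly in each part, making essential use of the $\PV$-provable ordinal arithmetic ($+, \cdot, \dotminus, d$) guaranteed by Definition \ref{t3-4}. Part $(i)$ will be handled by concatenation of flows; part $(ii)$ will use part $(i)$ after iterating the given flow $\beta$-many times per level.

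For part $(i)$, I would concatenate the two given witnesses. If $(H_1(\gamma, \bar{x}), \beta_1)$ witnesses $A \rhd_\alpha B$ and $(H_2(\gamma, \bar{x}), \beta_2)$ witnesses $B \rhd_\alpha C$, I take length $\beta_1 + \beta_2 \in \mathcal{O}$ and define
\[ H(\gamma, \bar{x}) \equiv (\gamma \preceq \beta_1 \to H_1(\gamma, \bar{x})) \wedge (\beta_1 \prec \gamma \to H_2(\gamma \dotminus \beta_1, \bar{x})), \]
which is in $\forall_1$. The endpoint equivalences $H(0) \leftrightarrow A$ and $H(\beta_1 + \beta_2) \leftrightarrow C$ are immediate. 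For the flow condition at $1 \preceq \delta \preceq \beta_1 + \beta_2$, I would split into $\delta \preceq \beta_1$ (use the flow condition of $H_1$ directly) and $\delta \succ \beta_1$; in the latter case write $\delta = \beta_1 + \delta'$ and recover $H_2(\gamma', \bar{x})$ for each $\gamma' \prec \delta'$ from the hypothesis, with the glue at $\gamma' = 0$ given by $H_1(\beta_1) \leftrightarrow B \leftrightarrow H_2(0)$, then finish by the flow condition of $H_2$.

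For part $(ii)$, let $(K(\eta, \bar{x}, \delta), \beta)$ be the assumed flow (with $\delta$ free). I would iterate $K$ across $\delta$ using the left division, taking length $\beta \cdot (\theta+1) \in \mathcal{O}$ and setting
\[ H(\rho, \bar{x}) \equiv K\!\left(\rho \dotminus \beta \cdot d(\rho, \beta),\; \bar{x},\; d(\rho, \beta)\right). \]
By the defining axioms of $d$, for each $\rho$ this equals $K(\eta, \bar{x}, \delta)$ where $\rho = \beta \cdot \delta + \eta$ with $\eta \prec \beta$. At $\rho = 0$ this reduces to $K(0, \bar{x}, 0) \leftrightarrow \bigwedge \Gamma$; at $\rho = \beta \cdot (\theta+1)$ it becomes $\bigwedge \Gamma \wedge \forall \gamma \prec \theta+1\, A(\gamma, \bar{x})$, which $\PV$-implies $A(\theta, \bar{x})$. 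A final appeal to Lemma \ref{ImplicationToFlow}$(i)$ composed via part $(i)$ delivers the desired $\Gamma \rhd_\alpha A(\theta, \bar{x})$.

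The heart of the argument is verifying the flow condition for $H$ in part $(ii)$, and this is the step I expect to be the main obstacle. I would split on $\eta$: if $\eta \succeq 1$, the flow condition of $K$ at fixed $\delta$ yields $H(\rho)$ directly. If $\eta = 0$ (so $\delta \succeq 1$), then for each $\delta' \prec \delta$ the monotonicity identities give $\beta \cdot \delta' + \eta' \prec \beta \cdot \delta = \rho$ for every $\eta' \prec \beta$, so the induction hypothesis supplies $K(\eta', \bar{x}, \delta')$ for all such $\eta'$; invoking the flow condition of $K$ at $\eta = \beta$ yields $K(\beta, \bar{x}, \delta') \leftrightarrow \bigwedge \Gamma \wedge \forall \gamma \prec \delta'+1\, A(\gamma, \bar{x})$, hence both $\bigwedge \Gamma$ and $A(\delta', \bar{x})$. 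Since $\delta'$ is arbitrary below $\delta$, I conclude $H(\rho) \leftrightarrow K(0, \bar{x}, \delta)$. The subtle point throughout is that every ordinal identity used—uniqueness of the decomposition $\rho = \beta \cdot \delta + \eta$, left distributivity, and strict left-monotonicity of multiplication—is a $\PV$-theorem, and this is precisely what Definition \ref{t3-4} and the Remark following it guarantee, so the construction formalizes in $\PV$ as required.
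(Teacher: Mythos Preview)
Your approach is the paper's in both parts: concatenation via $+$ and $\dotminus$ for $(i)$, and iteration to length $\beta\cdot(\theta+1)$ using the decomposition $\rho = \beta\cdot d(\rho,\beta) + (\rho \dotminus \beta\,d(\rho,\beta))$ for $(ii)$. There is, however, a genuine slip in your verification of the $\eta=0$ case of $(ii)$.

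You take $K$ to be the \emph{assumed} flow, whose right endpoint is $\forall\gamma\prec\delta+1\,A(\gamma,\bar x)$, not $\bigwedge\Gamma\wedge\forall\gamma\prec\delta+1\,A(\gamma,\bar x)$. Hence $K(\beta,\bar x,\delta')$ is $\PV$-equivalent only to $\forall\gamma\prec\delta'+1\,A(\gamma,\bar x)$; it does \emph{not} yield $\bigwedge\Gamma$ as you claim, and the seam fails: $K(\beta,\bar x,\delta')$ is not $\PV$-equivalent to $K(0,\bar x,\delta'+1)$. The paper handles this by first applying Lemma~\ref{ImplicationToFlow}$(ii)$ to conjoin $\bigwedge\Gamma$ on the right, obtaining a flow from $B(\delta):=\bigwedge\Gamma\wedge\forall\gamma\prec\delta\,A(\gamma,\bar x)$ to $B(\delta+1)$, whose endpoints match; only then does it iterate. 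In your setup the repair is just as easy: recover $\bigwedge\Gamma$ from $K(0,\bar x,\delta')$ (available in the hypothesis since $\beta\cdot\delta'\prec\rho$) rather than from $K(\beta,\bar x,\delta')$. With that one-line correction your argument goes through, and your uniform treatment of the $\eta=0$ case---deriving $A(\delta',\bar x)$ for every $\delta'\prec\delta$ via the flow condition at $\beta$---actually sidesteps the successor/limit split that the paper carries out.
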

\begin{proof}
For $(i)$, as $A(\bar{x}) \rhd_{\alpha} B(\bar{x})$, there exists an $\alpha$-flow $(H(\gamma, \bar{x}), \beta)$ from $A(\bar{x})$ to $B(\bar{x})$. Similarly, as $B(\bar{x}) \rhd_{\alpha} C(\bar{x})$, there is an $\alpha$-flow $(H'(\gamma, \bar{x}), \beta')$ from $B(\bar{x})$ to $C(\bar{x})$. Set $\beta''=\beta+\beta'$ and
$H''(\gamma, \bar{x})=[\gamma \preceq \beta \to
H(\gamma, \bar{x})] \wedge
[\beta \prec \gamma \preceq \beta+\beta' \to H'(\gamma \dotminus \beta, \bar{x})]$.
We claim that the pair $(H''(\gamma, \bar{x}), \beta'')$ is an $\alpha$-flow from $A(\bar{x})$ to $C(\bar{x})$. First, note that $H''(0, \bar{x})$ is $\PV$-equivalent to $H(0, \bar{x})$ which is $\PV$-equivalent to $A(\bar{x})$. Similarly, as $(\beta+\beta')\dotminus \beta=\beta'$ is provable in $\PV$, we know that $H''(\beta+\beta', \bar{x})$ is $\PV$-equivalent to $H'(\beta', \bar{x})$ which is $\PV$-equivalent to $C(\bar{x})$. To prove
$
\PV \vdash \forall \; 1 \preceq \delta \preceq \beta'' \; [\forall \gamma \prec \delta \; H''(\gamma, \bar{x}) \rightarrow  H''(\delta, \bar{x})],
$
note that if $\delta \preceq \beta$, then the claim reduces to the same claim for $H(\gamma, \bar{x})$ which is provable. If $\beta \prec \delta \preceq \beta+\beta'$, assume $\forall \gamma \prec \delta \; H''(\gamma, \bar{x})$ to prove $H''(\delta, \bar{x})$ or equivalently $H'(\delta \dotminus \beta, \bar{x})$. Note that $\forall \gamma \prec \delta \; H''(\gamma, \bar{x})$ implies $\forall \beta \preceq \gamma \prec \delta \; H''(\gamma, \bar{x})$. As the interval $(0, \delta \dotminus \beta)$ is isomorphic to $(\beta, \delta)$, by the map $\gamma \mapsto \beta+\gamma$, then $\forall \beta \preceq \gamma \prec \delta \; H''(\gamma, \bar{x})$ implies $\forall 0 \prec \gamma \prec \delta \dotminus \beta \; H''(\beta+\gamma, \bar{x})$ which implies $\forall 0 \prec \gamma \prec \delta \dotminus \beta \; H'(\gamma, \bar{x})$. On the other hand, $\forall \beta \preceq \gamma \prec \delta \; H''(\gamma, \bar{x})$ implies $H''(\beta, \bar{x})$ which is $\PV$-equivalent to $H(\beta, \bar{x})$, by definition.
As $H(\beta, \bar{x})$ is $\PV$-equivalent to $B(\bar{x})$ which is also $\PV$-equivalent to $H'(0, \bar{x})$, we can claim that $H(\beta, \bar{x})$ and $H'(0, \bar{x})$ are $\PV$-equivalent. Hence, $\forall \beta \preceq \gamma \prec \delta \; H''(\gamma, \bar{x})$ implies $\forall \gamma \prec \delta \dotminus \beta \; H'(\gamma, \bar{x})$ which also implies $H'(\delta \dotminus \beta, \bar{x})$, as $(H'(\gamma, \bar{x}), \beta')$ is an $\alpha$-flow.\\
For $(ii)$, as $\bigwedge \Gamma \wedge \forall \gamma \prec \delta \, A(\gamma, \bar{x}) \rhd_{\alpha} \forall \gamma \prec \delta+1 \; A(\gamma, \bar{x})$, by Lemma \ref{ImplicationToFlow}, we have $\bigwedge \Gamma \wedge \forall \gamma \prec \delta \, A(\gamma, \bar{x}) \rhd_{\alpha} \bigwedge \Gamma \wedge \forall \gamma \prec \delta+1 \; A(\gamma, \bar{x})$. Set $B(\delta, \bar{x})=\bigwedge \Gamma \wedge \forall \gamma \prec \delta \, A(\gamma, \bar{x})$. Therefore, $B(\delta, \bar{x}) \rhd_{\alpha} B(\delta+1, \bar{x})$. Let $(H(\eta, \delta, \bar{x}), \beta)$ be the $\alpha$-flow from $B(\delta, \bar{x})$ to $B(\delta+1, \bar{x})$. Note that $H(0, \delta, \bar{x})$ is $\PV$-equivalent to $B(\delta,\bar{x})$ and $H(\beta, \delta, \bar{x})$ is $\PV$-equivalent to $H(0, \delta+1, \bar{x})$, as both are $\PV$-equivalent to $B(\delta+1)$. Define $\beta'=\beta (\theta +1)$ and $I(\tau, \bar{x})=H(\tau \dotminus \beta d(\tau, \beta), d(\tau, \beta), \bar{x})$ and note that $I(\tau, \bar{x}) \in \forall_1$.
We show that $(I(\tau, \bar{x}), \beta')$ is an $\alpha$-flow from $B(0, \bar{x})$ to $B(\theta+1, \bar{x})$. Note that $(I(\tau, \bar{x}), \beta')$ is nothing but the result of gluing the $\alpha$-flows $(H(\eta, \delta, \bar{x}), \beta)$, for all $\delta \prec \theta+1$, one after another as depicted in the following figure, (for simplicity, in the figures, we drop the free variables $\bar{x}$).
\[
{\footnotesize
\begin{tikzcd}
	{B(0)} &&& {B(1)} & \cdots & {B(\theta+1)} \\
	{H(0,0)} & {H(1,0)} & \cdots & {H(\beta,0) \equiv H(0,1)} & \cdots & {H(0,\theta+1)} \\
	{I(0)} & {I(1)} & \cdots & {I(\beta)} & \cdots & {I(\beta(\theta+1))}
	\arrow[from=2-1, to=2-2]
	\arrow["\equiv"{marking}, draw=none, from=1-1, to=2-1]
	\arrow["\equiv"{marking}, draw=none, from=1-6, to=2-6]
	\arrow[from=2-5, to=2-6]
	\arrow["\equiv"{marking}, draw=none, from=2-1, to=3-1]
	\arrow["\equiv"{marking}, draw=none, from=2-2, to=3-2]
	\arrow["\equiv"{marking}, draw=none, from=2-6, to=3-6]
	\arrow[from=3-1, to=3-2]
	\arrow[from=3-5, to=3-6]
	\arrow[from=1-5, to=1-6]
	\arrow[from=2-2, to=2-3]
	\arrow[from=2-3, to=2-4]
	\arrow[from=2-4, to=2-5]
	\arrow[from=3-2, to=3-3]
	\arrow[from=3-3, to=3-4]
	\arrow[from=3-4, to=3-5]
	\arrow["\equiv"{marking}, draw=none, from=2-4, to=3-4]
	\arrow[from=1-1, to=1-4]
	\arrow["\equiv"{marking}, draw=none, from=1-4, to=2-4]
	\arrow[from=1-4, to=1-5]
\end{tikzcd}
}
\]
First, as $d(0,\beta)=0$ and $0 \dotminus \beta d(0,\beta)=0$, provably in $\PV$, we know that $I(0, \bar{x})$ is $\PV$-equivalent to $H(0, 0, \bar{x})$ which is itself $\PV$-equivalent to $B(0, \bar{x})$. Secondly, as $d(\beta (\theta+1), \beta)=\theta+1$ and $\beta (\theta+1) \dotminus \beta d(\beta (\theta+1), \beta)=0$, provably in $\PV$, we know that $I(\beta (\theta+1), \bar{x})$ is $\PV$-equivalent to $H(0, \theta+1, \bar{x})$ which is $\PV$-equivalent to $B(\theta+1, \bar{x})$. For the middle condition, we must prove 
$
\PV \vdash \forall \; 1 \preceq \tau \preceq \beta (\theta+1) \; [\forall \zeta \prec \tau \; I(\zeta, \bar{x}) \rightarrow I(\tau, \bar{x})]$.
There are two cases to consider, either $\beta d(\tau, \beta) \prec \tau$ or $\beta d(\tau, \beta)= \tau$. If $\beta d(\tau, \beta) \prec \tau$ then $\beta d(\tau, \beta)+1 \preceq \tau$ which implies $\tau=\beta d(\tau, \beta)+\mu$ for $\mu=\tau \dotminus \beta d(\tau, \beta) \succeq 1$. As for any $\eta \prec \mu$, we have $\beta d(\tau, \beta) + \eta \prec \tau $, we know that $\forall \zeta \prec \tau \; I(\zeta, \bar{x})$ implies $\forall \eta \prec \mu \; H(\eta, d(\tau, \beta), \bar{x})$. As we have $\mu \succeq 1$, the latter proves $H(\mu, d(\tau, \beta), \bar{x})$ which is $\PV$-equivalent to $I(\tau, \bar{x})$. 
\[
{\footnotesize
\begin{tikzcd}
	\cdots & {H(0,d(\tau, \beta))} & {H(1,d(\tau, \beta))} & \cdots & {H(\mu,d(\tau, \beta))} & \cdots \\
	\cdots & {I(\beta d(\tau, \beta))} & {I(\beta d(\tau, \beta)+1)} & \cdots & {I(\tau)} & \cdots
	\arrow[from=1-5, to=1-6]
	\arrow["\equiv"{marking}, draw=none, from=1-2, to=2-2]
	\arrow["\equiv"{marking}, draw=none, from=1-5, to=2-5]
	\arrow[from=2-5, to=2-6]
	\arrow[from=1-1, to=1-2]
	\arrow[from=2-1, to=2-2]
	\arrow[from=1-4, to=1-5]
	\arrow[from=2-4, to=2-5]
	\arrow[from=1-2, to=1-3]
	\arrow[from=1-3, to=1-4]
	\arrow[from=2-2, to=2-3]
	\arrow[from=2-3, to=2-4]
	\arrow["\equiv"{marking}, draw=none, from=1-3, to=2-3]
\end{tikzcd}
}
\]
For the other case, if $\beta d(\tau, \beta)=\tau$, we should use $\forall \zeta \prec \tau \; I(\zeta, \bar{x})$ to prove the formula $I(\tau, \bar{x})=H(0, d(\tau, \beta), \bar{x})$. Again, there are two cases to consider: either $d(\tau, \beta)$ is a successor or a limit ordinal. If $d(\tau, \beta)=\rho+1$, for some $\rho$, as $H(0, \rho+1, \bar{x})$ is $\PV$-equivalent to $H(\beta, \rho, \bar{x})$, it is enough to prove $H(\beta, \rho, \bar{x})$. As $\beta \rho+\eta \prec \beta \rho+\beta=\beta(\rho+1)=\tau$, for any $\eta \prec \beta$, we know that $\forall \zeta \prec \tau \, I(\zeta, \bar{x})$ implies $\forall \eta \prec \beta \, H(\eta, \rho, \bar{x})$ which implies $H(\beta, \rho, \bar{x})$.
\[
{\footnotesize
\begin{tikzcd}
	\cdots & {H(0,\rho)} & {H(1,\rho)} & \cdots & {H(\beta, \rho) \equiv H(0,\rho+1)} & \cdots \\
	\cdots & {I(\beta\rho)} & {I(\beta\rho+1)} & \cdots & {I(\beta(\rho+1))} & \cdots
	\arrow[from=1-2, to=1-3]
	\arrow[from=1-3, to=1-4]
	\arrow[from=1-4, to=1-5]
	\arrow[from=1-1, to=1-2]
	\arrow[from=2-1, to=2-2]
	\arrow[from=1-5, to=1-6]
	\arrow[from=2-5, to=2-6]
	\arrow[from=2-2, to=2-3]
	\arrow[from=2-3, to=2-4]
	\arrow[from=2-4, to=2-5]
	\arrow["\equiv"{marking}, draw=none, from=1-2, to=2-2]
	\arrow["\equiv"{marking}, draw=none, from=1-3, to=2-3]
	\arrow["\equiv"{marking}, draw=none, from=1-5, to=2-5]
\end{tikzcd}
}
\]
If $d(\tau, \beta)$ is a limit ordinal, then $\forall \zeta \prec \beta d(\tau, \beta) \, I(\zeta, \bar{x})$ implies the formula $\forall \delta \prec d(\tau, \beta) \, H(0, \delta, \bar{x})$ which implies $\forall \delta \prec d(\tau, \beta) B(\delta, \bar{x})$. The latter is $\forall \delta \prec d(\tau, \beta) [\bigwedge \Gamma \wedge \forall \gamma \prec \delta \, A(\gamma, \bar{x})]$ that implies $\bigwedge \Gamma \wedge \forall \gamma \prec d(\tau, \beta) \; A(\gamma, \bar{x})$, as $d(\tau, \beta)$ is a limit ordinal. The latter is $\PV$-equivalent to $H(0, d(\tau, \beta), \bar{x})=I(\tau, \bar{x})$. 
This completes the proof of the claim and shows that $B(0, \bar{x}) \rhd_{\alpha} B(\theta+1, \bar{x})$.
Now, as $\PV \vdash \bigwedge \Gamma \to (\bigwedge \Gamma \wedge \forall \gamma \prec 0 \; A(\gamma, \bar{x}))$ and $\PV \vdash (\bigwedge \Gamma \wedge \forall \gamma \prec \theta+1 \; A(\gamma, \bar{x})) \to A(\theta, \bar{x})$, by Lemma \ref{ImplicationToFlow}, we have $\bigwedge \Gamma \rhd_{\alpha} \bigwedge \Gamma \wedge \forall \gamma \prec 0 \; A(\gamma, \bar{x})$ and $\bigwedge \Gamma \wedge \forall \gamma \prec \theta+1 \; A(\gamma, \bar{x}) \rhd_{\alpha} A(\theta, \bar{x})$. Hence, by part $(i)$, we have $\bigwedge \Gamma \rhd_{\alpha} A(\theta, \bar{x})$ which completes the proof.
\end{proof}

\begin{lemma}\label{ConjAndDisj}(Conjunction and Disjunction Rules)
\begin{description}
\item[$(i)$]
If $\Gamma, A \rhd_{\alpha} \Delta$ or $\Gamma, B \rhd_{\alpha} \Delta$, then $\Gamma, A \wedge B \rhd_{\alpha} \Delta$.
\item[$(ii)$]
If $\Gamma \rhd_{\alpha} A, \Delta$ and $\Gamma \rhd_{\alpha} B, \Delta$, then $\Gamma \rhd_{\alpha} A \wedge B, \Delta$.
\item[$(iii)$]
If $\Gamma \rhd_{\alpha} A, \Delta$ or $\Gamma \rhd_{\alpha} B, \Delta$, then $\Gamma \rhd_{\alpha} A \vee B, \Delta$.
\item[$(iv)$]
If $\Gamma, A \rhd_{\alpha} \Delta$ and $\Gamma, B \rhd_{\alpha} \Delta$, then $\Gamma, A \vee B \rhd_{\alpha} \Delta$.
\end{description}
\end{lemma}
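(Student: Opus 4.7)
The plan is to derive all four parts purely by propositional $\PV$-provable manipulations combined with the congruence rule of Lemma \ref{ImplicationToFlow} and the transitivity rule of Lemma \ref{Gluing}(i); no new transfinite construction is required, because the length-$\beta$ part of every flow already exists by hypothesis and we only need to wrap it with short $\PV$-provable front and back segments. Throughout, one freely uses that $\forall_1$ is closed under $\wedge$ and $\vee$, so every auxiliary formula I introduce below again lies in $\forall_1$ and is a legitimate endpoint of a flow.

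For parts $(i)$ and $(iii)$, only one side of the disjunction in the hypothesis is needed. For $(i)$, starting from $\bigwedge \Gamma \wedge A \rhd_{\alpha} \bigvee \Delta$, I would use that $\PV$ proves $\bigwedge \Gamma \wedge (A \wedge B) \to \bigwedge \Gamma \wedge A$, so Lemma \ref{ImplicationToFlow}(i) yields a flow $\bigwedge \Gamma \wedge (A \wedge B) \rhd_{\alpha} \bigwedge \Gamma \wedge A$, and Lemma \ref{Gluing}(i) then chains it with the hypothesis to produce the desired $\Gamma, A \wedge B \rhd_{\alpha} \Delta$; the $B$-case is identical. Part $(iii)$ is dual: from $\bigwedge \Gamma \rhd_{\alpha} A \vee \bigvee \Delta$, one invokes the $\PV$-valid implication $A \vee \bigvee \Delta \to (A \vee B) \vee \bigvee \Delta$ together with Lemma \ref{ImplicationToFlow}(i) and Lemma \ref{Gluing}(i).

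Parts $(ii)$ and $(iv)$ combine two flows, and this is the only place where Lemma \ref{ImplicationToFlow}(ii) has real work to do. For $(iv)$ I would follow the chain $\bigwedge \Gamma \wedge (A \vee B) \to (\bigwedge \Gamma \wedge A) \vee (\bigwedge \Gamma \wedge B) \rhd_{\alpha} \bigvee \Delta \vee (\bigwedge \Gamma \wedge B) \rhd_{\alpha} \bigvee \Delta \vee \bigvee \Delta \to \bigvee \Delta$, in which the first and last steps are $\PV$-theorems promoted to flows by Lemma \ref{ImplicationToFlow}(i), while the two middle flows come from the two hypotheses via Lemma \ref{ImplicationToFlow}(ii) (disjoining on the right with $\bigwedge \Gamma \wedge B$ and with $\bigvee \Delta$, respectively); the entire sequence is then glued by Lemma \ref{Gluing}(i). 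Part $(ii)$ is the conjunctive mirror image: one goes from $\bigwedge \Gamma$ to $\bigwedge \Gamma \wedge \bigwedge \Gamma$ (a $\PV$-theorem), then by Lemma \ref{ImplicationToFlow}(ii) applied to the second hypothesis to $\bigwedge \Gamma \wedge (B \vee \bigvee \Delta)$, then by Lemma \ref{ImplicationToFlow}(ii) applied to the first hypothesis to $(A \vee \bigvee \Delta) \wedge (B \vee \bigvee \Delta)$, and finally via the distributive $\PV$-theorem $(A \vee \bigvee \Delta) \wedge (B \vee \bigvee \Delta) \to (A \wedge B) \vee \bigvee \Delta$. There is no genuine obstacle here; the only thing to check is that the distributive and duplication equivalences over multisets are provable in $\PV$, which is standard because they reduce to finitely many purely propositional identities between $\forall_1$-formulas.
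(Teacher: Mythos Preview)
Your proposal is correct and follows essentially the same route as the paper: parts $(i)$ and $(iii)$ are handled by a single $\PV$-provable implication promoted to a flow via Lemma~\ref{ImplicationToFlow}(i) and glued via Lemma~\ref{Gluing}(i), while parts $(ii)$ and $(iv)$ are obtained by duplicating the antecedent, applying the two hypotheses in sequence through the congruence rule Lemma~\ref{ImplicationToFlow}(ii), and finishing with the distributive law. The paper only spells out $(ii)$ and declares $(iv)$ ``similar'', whereas you write out $(iv)$ explicitly and sketch $(ii)$; but the underlying chain of flows is the same in both accounts.
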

\begin{proof}
For $(i)$ and $(iii)$, as the implications $[(\bigwedge \Gamma \wedge (A \wedge B)) \to (\bigwedge \Gamma \wedge A)]$, $[(\bigwedge \Gamma \wedge (A \wedge B)) \to (\bigwedge \Gamma \wedge B)]$, $[(\bigvee \Delta \vee A) \to (\bigvee \Delta \vee (A \vee B))]$ and $[(\bigvee \Delta \vee B) \to (\bigvee \Delta \vee (A \vee B))]$ are all provable in $\PV$, using Lemma \ref{ImplicationToFlow} and Lemma \ref{Gluing}, we reach what we wanted.
For $(ii)$, if $\Gamma \rhd_{\alpha} \Delta, A $ then $\bigwedge \Gamma \rhd_{\alpha} \bigvee \Delta \vee A $, by definition. By Lemma \ref{ImplicationToFlow}, we reach $\bigwedge \Gamma \rhd_{\alpha} (\bigvee \Delta \vee A) \wedge \bigwedge \Gamma$. Similarly, we have $\bigwedge \Gamma \rhd_{\alpha} \bigvee \Delta \vee B$ and by Lemma \ref{ImplicationToFlow}, we reach
$\bigwedge \Gamma \wedge (\bigvee \Delta \vee A) \rhd_{\alpha} (\bigvee \Delta \vee B) \wedge (\bigvee \Delta \vee A)$.
Therefore,
$\bigwedge \Gamma \rhd_{\alpha} (\bigvee \Delta \vee B) \wedge (\bigvee \Delta \vee A)$, by part $(i)$ in Lemma \ref{Gluing}.
Finally, as
$(\bigvee \Delta \vee B) \wedge (\bigvee \Delta \vee A)  \to \bigvee \Delta \vee (A \wedge B)$ is provable in $\PV$, by Lemma \ref{ImplicationToFlow} and Lemma \ref{Gluing}, we reach
$\bigwedge \Gamma \rhd_{\alpha} \bigvee \Delta \vee (A \wedge B)$.
The proof for $(iv)$ is similar.
\end{proof}
Having the required lemmas, we are now ready to prove the following theorem as the main extraction technique that witnesses the proofs in $\TI(\forall_1, \prec)$ by $\alpha$-flows.
\begin{theorem}\label{SoundnessTheorem}
Let $\Gamma(\bar{x}) \cup \Delta(\bar{x}) \subseteq \forall_1$. Then, $\TI(\forall_1, \prec) \vdash \bigwedge \Gamma(\bar{x}) \rightarrow \bigvee \Delta(\bar{x})$ iff $\Gamma(\bar{x}) \rhd_{\alpha} \Delta(\bar{x})$.
\end{theorem}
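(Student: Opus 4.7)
The plan is to prove both directions. For $(\Leftarrow)$, I will work directly inside $\TI(\forall_1, \prec)$, converting a single $\alpha$-flow into a proof via one application of the transfinite induction scheme. For $(\Rightarrow)$, I will first invoke Lemma \ref{CutConsequence} to reduce $\TI(\forall_1, \prec)$-provability of $\bigwedge \Gamma \to \bigvee \Delta$ to a $\mathbf{G}_1$-proof $\pi$ of $\Gamma \Rightarrow \Delta$ all of whose formulas are in $\forall_1$, and then induct on $\pi$ using the calculus of flows built up in Lemmas \ref{ImplicationToFlow}--\ref{ConjAndDisj}.

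For $(\Leftarrow)$, I fix an $\alpha$-flow $(H(\gamma, \bar{x}), \beta)$ witnessing $\Gamma \rhd_{\alpha} \Delta$ and reason in $\TI(\forall_1, \prec)$ under the assumption $\bigwedge \Gamma$. Setting $A(\delta) := (\delta \preceq \beta \to H(\delta, \bar{x}))$, which lies in $\forall_1$ because $H$ does, I verify progressiveness $\forall \delta \, [\forall \gamma \prec \delta \, A(\gamma) \to A(\delta)]$ by cases: the case $\beta \prec \delta$ is vacuous, while under $\delta \preceq \beta$ the subcase $\delta = \mathbf{0}$ uses the flow's first clause $\PV \vdash H(\mathbf{0}, \bar{x}) \leftrightarrow \bigwedge \Gamma$, and the subcase $\delta \succeq \mathbf{1}$ uses that $\gamma \prec \delta \preceq \beta$ forces $\gamma \preceq \beta$, so the hypothesis yields $\forall \gamma \prec \delta \, H(\gamma, \bar{x})$, from which the middle clause of the flow delivers $H(\delta, \bar{x})$. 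Applying the $\TI(\forall_1, \prec)$-scheme at $\theta = \beta$ gives $A(\beta)$, hence $H(\beta, \bar{x})$, which is $\PV$-equivalent to $\bigvee \Delta$.

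For $(\Rightarrow)$, I fix $\pi$ and induct on its rules, proving that every sequent $\Pi \Rightarrow \Sigma$ appearing in $\pi$ satisfies $\Pi \rhd_{\alpha} \Sigma$. The axioms and the structural rules (weakening, contraction, identity, equality, and the quantifier-free $\PV$-theorems) each amount to a $\PV$-provable implication between the corresponding conjunction and disjunction, so Lemma \ref{ImplicationToFlow}(i) dispatches them. Weak cut is the transitivity of flows, Lemma \ref{Gluing}(i); weak induction is Lemma \ref{Gluing}(ii); and the propositional rules $L{\wedge}, R{\wedge}, L{\vee}, R{\vee}$ are Lemma \ref{ConjAndDisj}. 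The rule $L\forall$ reduces by transitivity to the induction hypothesis after applying Lemma \ref{ImplicationToFlow}(i) to $\PV \vdash \forall x \, A(x) \to A(t)$. The genuinely new case is $R\forall$: given an $\alpha$-flow $(H(\gamma, \bar{x}, y), \beta)$ witnessing $\Pi \rhd_{\alpha} \Sigma, A(y)$ with $y$ not free in $\Pi$ or $\Sigma$, I take $H'(\gamma, \bar{x}) := \forall y \, H(\gamma, \bar{x}, y)$. Closure of $\forall_1$ under $\forall$ keeps $H'$ in $\forall_1$, and the boundary equivalences for $H'$ follow from those for $H$ via the $\PV$-provable law that $\forall y$ distributes over disjunctions in which $y$ does not appear outside $A(y)$; the middle clause for $H'$ follows by commuting $\forall y$ past the uniform middle condition for $H$.

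The main obstacle will be the $R\forall$ step: verifying that universally closing the witness formula over the eigenvariable preserves all three flow clauses requires care in observing that the middle condition for $H$ is stated uniformly in $y$ and that the boundary formulas contain $y$ only inside $A(y)$, which is precisely what allows the quantifier to commute. Beyond this the argument is a bookkeeping induction; the technically most substantive case, namely the induction rule, has already been packaged in Lemma \ref{Gluing}(ii), and the absence of existential formulas from $\pi$ (guaranteed by Lemma \ref{CutConsequence}) means I never need to confront $L\exists$ or $R\exists$.
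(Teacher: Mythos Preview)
Your proof is correct and follows essentially the same approach as the paper: the $(\Leftarrow)$ direction applies transfinite induction to (a mild reformulation of) the flow formula, and the $(\Rightarrow)$ direction proceeds by induction on a $\forall_1$-restricted $\mathbf{G}_1$-proof obtained from Lemma~\ref{CutConsequence}, dispatching each rule via the flow-calculus lemmas, with the $R\forall$ case handled by universally closing the flow over the eigenvariable exactly as the paper does. Your treatment is slightly more explicit in two places---spelling out the auxiliary induction formula $A(\delta) := (\delta \preceq \beta \to H(\delta,\bar x))$ for $(\Leftarrow)$, and noting that $L\exists$, $R\exists$ cannot occur in a $\forall_1$-only proof---but these are refinements of the same argument rather than a different route.
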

\begin{proof}
We first prove the easier direction. Assume $\Gamma(\bar{x}) \rhd_{\alpha} \Delta(\bar{x})$ and the pair $(H(\gamma, \bar{x}), \beta)$ is an $\alpha$-flow from $\bigwedge \Gamma(\bar{x})$ to $\bigvee \Delta(\bar{x})$. As $\PV \vdash \forall \; 1 \preceq \delta \preceq \beta \; [\forall \gamma \prec \delta \; H(\gamma, \bar{x}) \rightarrow  H(\delta, \bar{x})]$ and $\TI(\forall_1, \prec)$ extends $\PV$, we have
\[
\TI(\forall_1, \prec) \vdash \forall \; 1 \preceq \delta \preceq \beta \; [\forall \gamma \prec \delta \; H(\gamma, \bar{x}) \rightarrow  H(\delta, \bar{x})].
\]
Then, as $H(\gamma, \bar{x}) \in \forall_1$, by the transfinite induction in $\TI(\forall_1, \prec)$, we reach $\TI(\forall_1, \prec) \vdash H(0, \bar{x}) \rightarrow H(\beta, \bar{x})$.
Finally, using the $\PV$-provable equivalences $\bigwedge \Gamma(\bar{x}) \leftrightarrow H(0, \bar{x})$ and
$H(\beta, \bar{x}) \leftrightarrow \bigvee \Delta(\bar{x})$, we reach $\TI(\forall_1, \prec) \vdash \bigwedge \Gamma(\bar{x}) \rightarrow \bigvee \Delta(\bar{x})$.\\
For the other direction, assume $\TI(\forall_1, \prec) \vdash \bigwedge \Gamma(\bar{x}) \rightarrow \bigvee \Delta(\bar{x})$. By Lemma \ref{CutConsequence}, $\Gamma(\bar{x}) \Rightarrow \Delta(\bar{x})$ has a $\mathbf{G}_1$-proof only consisting of $\forall_1$-formulas. By induction on this proof, we show that for any sequent $\Sigma \Rightarrow \Lambda$ in the proof, we have $\Sigma \rhd_{\alpha} \Lambda$.\\
For the axioms, as they are provable in $\PV$, using Lemma \ref{ImplicationToFlow}, there is nothing to prove. The case of structural rules (except for the weak cut) is easy. Weak cut and weak induction are addressed in Lemma \ref{Gluing}. The conjunction and disjunction rules are proved in Lemma \ref{ConjAndDisj}.
For the right universal quantifier rule, if $\Sigma(\bar{x}) \Rightarrow \Lambda(\bar{x}), \forall z  B(\bar{x}, z)$ is proved from $\Sigma(\bar{x}) \Rightarrow \Lambda(\bar{x}), B(\bar{x}, z)$, then by induction hypothesis, $\Sigma(\bar{x}) \rhd_{\alpha} \Lambda(\bar{x}), B(\bar{x}, z)$. Therefore, there exists an $\alpha$-flow $(H(\gamma, \bar{x}, z), \beta)$ from $\bigwedge \Sigma(\bar{x})$ to $B(\bar{x}, z) \vee \bigvee \Lambda(\bar{x})$. Define $I(\gamma, \bar{x})=\forall z H(\gamma, \bar{x}, z)$ and note that $I(\bar{x}, z) \in \forall_1$. It is easy to see that $(I(\gamma, \bar{x}), \beta)$ is an $\alpha$-flow from $\forall z [\bigwedge \Sigma(\bar{x})]$ to $\forall z [B(\bar{x}, z) \vee \bigvee \Lambda(\bar{x})]$, as $\PV$-provability of $\forall \gamma \prec \delta H(\gamma, z, \bar{x}) \rightarrow H(\delta, z, \bar{x})$
implies the $\PV$-provability of
$\forall \gamma \prec \delta \forall z H(\gamma, z, \bar{x}) \rightarrow \forall z H(\delta, z, \bar{x})$.
Finally, as $z$ does not occur as a free variable in $\Sigma(\bar{x}) \cup \Lambda(\bar{x})$, we have the $\PV$-equivalence between $\forall z [\bigwedge \Sigma(\bar{x})]$ and $\bigwedge \Sigma(\bar{x})$ and similarly between $\forall z [B(\bar{x}, z) \vee \bigvee \Lambda(\bar{x})]$ and $\bigvee \Lambda(\bar{x}) \vee \forall z  B(\bar{x}, z)$. Using Lemma \ref{ImplicationToFlow} and Lemma \ref{Gluing}, we can prove $\bigwedge \Sigma(\bar{x}) \rhd_{\alpha} \bigvee \Lambda(\bar{x}) \vee \forall z  B(\bar{x}, z)$.
For the left universal quantifier rule, if $\Sigma(\bar{x}), \forall z B(\bar{x}, z) \Rightarrow \Lambda(\bar{x})$ is proved from $\Sigma(\bar{x}), B(\bar{x}, s(\bar{x})) \Rightarrow \Lambda(\bar{x})$, then by induction hypothesis
$
\Sigma(\bar{x}), B(\bar{x}, s(\bar{x})) \rhd_{\alpha} \Lambda(\bar{x})$. Since $\PV \vdash \bigwedge \Sigma(\bar{x}) \wedge \forall z B(\bar{x}, z) \rightarrow \bigwedge \Sigma(\bar{x}) \wedge B(\bar{x}, s(\bar{x}))$, by Lemma \ref{ImplicationToFlow}
and Lemma \ref{Gluing}, we reach
$\Sigma(\bar{x}), \forall z B(\bar{x}, z) \rhd_{\alpha} \Lambda(\bar{x})$.
\end{proof}

\begin{corollary}\label{MainCor} Let $\alpha$ be an ordinal with the ptime representation $\mathfrak{O}$. Then,
$\PA+\bigcup _{\beta \in \mathcal{O}}\TI(\prec_\beta) \vdash \bigwedge \Gamma(\bar{x}) \rightarrow \bigvee \Delta(\bar{x})$ iff $\Gamma(\bar{x}) \rhd_{\alpha} \Delta(\bar{x})$, for $\Gamma(\bar{x}) \cup \Delta(\bar{x}) \subseteq \forall_1$.
\end{corollary}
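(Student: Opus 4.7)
The statement is a direct marriage of the two main results already assembled in this section: Theorem \ref{SoundnessTheorem} gives a soundness/completeness correspondence between $\alpha$-flows and provability in the auxiliary system $\TI(\forall_1, \prec)$, while Corollary \ref{PAToTI} tells us that, at the $\Pi^0_2$ level, $\TI(\forall_1, \prec)$ is indistinguishable from the full theory $\PA+\bigcup_{\beta \in \mathcal{O}}\TI(\prec_\beta)$. The plan is therefore to verify that the implications $\bigwedge \Gamma(\bar x) \to \bigvee \Delta(\bar x)$, with $\Gamma \cup \Delta \subseteq \forall_1$, fall (after universal closure over $\bar x$) within the $\Pi^0_2$ class of the paper, and then to let these two results do all the work.

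For the direction from right to left, suppose $\Gamma(\bar x) \rhd_{\alpha} \Delta(\bar x)$. Theorem \ref{SoundnessTheorem} yields $\TI(\forall_1, \prec) \vdash \bigwedge \Gamma(\bar x) \to \bigvee \Delta(\bar x)$. Since $\PA+\bigcup_{\beta \in \mathcal{O}}\TI(\prec_\beta)$ proves full-induction and unrestricted transfinite induction, it contains $\TI(\forall_1, \prec)$ as a subtheory, and the implication is then provable there as well. No further work is needed in this direction.

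For the converse, assume that $\PA+\bigcup_{\beta \in \mathcal{O}}\TI(\prec_\beta)$ proves $\bigwedge \Gamma(\bar x) \to \bigvee \Delta(\bar x)$; then it also proves the universal closure $\forall \bar x\,(\bigwedge \Gamma(\bar x) \to \bigvee \Delta(\bar x))$. Writing each formula in $\Gamma$ and $\Delta$ in its canonical $\forall_1$-prenex form $\forall \bar z\, C(\bar z, \bar x)$ with $C$ quantifier-free, standard first-order manipulations (negating the antecedent, collecting existentials from $\Gamma$ and universals from $\Delta$) bring the closure into the shape $\forall \bar x\, \forall \bar w\, \exists \bar z\, E(\bar x, \bar w, \bar z)$ with $E$ quantifier-free. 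This is a $\Pi^0_2$-formula in the sense fixed in Section \ref{Preliminaries}, because a quantifier-free matrix trivially satisfies the ``all inner quantifiers are bounded'' requirement. Corollary \ref{PAToTI} then supplies $\TI(\forall_1, \prec) \vdash \forall \bar x\,(\bigwedge \Gamma(\bar x) \to \bigvee \Delta(\bar x))$, and Theorem \ref{SoundnessTheorem} converts this into $\Gamma(\bar x) \rhd_{\alpha} \Delta(\bar x)$.

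The only step that asks for any care is the logical rearrangement showing that the universal closure of the implication genuinely lands in the paper's $\Pi^0_2$ class; this is routine prenexing and not a real obstacle. All the substantive mathematical content of the corollary has already been discharged in Theorem \ref{SoundnessTheorem} (the flow-based witnessing of $\TI(\forall_1,\prec)$) and in Corollary \ref{PAToTI} (the $\Pi^0_2$-reduction of $\PA+\bigcup_{\beta \in \mathcal{O}}\TI(\prec_\beta)$ to $\TI(\forall_1,\prec)$).
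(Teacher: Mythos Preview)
Your proposal is correct and follows essentially the same approach as the paper's own proof, which simply notes that any implication $\bigwedge \Gamma(\bar x) \to \bigvee \Delta(\bar x)$ with $\Gamma \cup \Delta \subseteq \forall_1$ is logically equivalent to a $\Pi^0_2$-formula and then invokes Theorem~\ref{SoundnessTheorem} together with Corollary~\ref{PAToTI}. You have merely spelled out the prenexing step and separated the two directions (using the trivial inclusion $\TI(\forall_1,\prec) \subseteq \PA+\bigcup_{\beta}\TI(\prec_\beta)$ for right-to-left rather than the full $\Pi^0_2$-equivalence), but the substance is identical.
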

\begin{proof}
As any implication in the form $\bigwedge \Gamma(\bar{x}) \rightarrow \bigvee \Delta(\bar{x})$ is logically equivalent to a $\Pi^0_2$-formula, the claim is a consequence of Theorem \ref{SoundnessTheorem} and Corollary \ref{PAToTI}.
\end{proof}

\begin{corollary}\label{MainCorConcrete} 
Let $\mathfrak{O}_{0}$ be the ptime representation for $\epsilon_0$ introduced in Subsection \ref{Notation}. Then,
$\PA \vdash \bigwedge \Gamma(\bar{x}) \rightarrow \bigvee \Delta(\bar{x})$ iff $\Gamma(\bar{x}) \rhd_{\epsilon_0} \Delta(\bar{x})$, for $\Gamma(\bar{x}) \cup \Delta(\bar{x}) \subseteq \forall_1$.
\end{corollary}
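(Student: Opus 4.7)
The plan is to derive this as a direct specialization of Corollary \ref{MainCor} to the case $\alpha = \epsilon_0$ with the ptime representation $\mathfrak{O}_{0}$ constructed in Subsection \ref{Notation}. Corollary \ref{MainCor} gives us, for this choice, that
\[
\PA+\bigcup _{\beta \in \mathcal{O}_{0}}\TI(\prec_\beta) \vdash \bigwedge \Gamma(\bar{x}) \rightarrow \bigvee \Delta(\bar{x}) \;\;\text{iff}\;\; \Gamma(\bar{x}) \rhd_{\epsilon_0} \Delta(\bar{x}).
\]
So it is enough to show that the theory $\PA+\bigcup _{\beta \in \mathcal{O}_{0}}\TI(\prec_\beta)$ proves exactly the same formulas as $\PA$ itself. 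The inclusion $\PA \subseteq \PA+\bigcup _{\beta \in \mathcal{O}_{0}}\TI(\prec_\beta)$ gives the forward direction for free.

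For the nontrivial direction, I will appeal to Gentzen's classical result that $\epsilon_0$ is the proof-theoretic ordinal of $\PA$. Concretely, for every fixed $\beta \prec \epsilon_0$, the scheme $\TI(\prec_\beta)$ (with induction formulas ranging over all of $\mathcal{L}_{\PV}$) is provable in $\PA$. Since any single proof in $\PA+\bigcup _{\beta \in \mathcal{O}_{0}}\TI(\prec_\beta)$ uses only finitely many instances of $\TI(\prec_\beta)$, each with a specific ordinal constant $\beta \prec \epsilon_0$, the whole proof can be carried out inside $\PA$. Hence the two theories prove the same formulas, and combining this with Corollary \ref{MainCor} gives the desired equivalence.

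The only subtlety, and what I expect to be the main point requiring care, is to verify that Gentzen's result applies to the concrete ptime ordinal notation $\mathfrak{O}_{0}$ rather than to some abstract representation. This is essentially a matter of checking that the predicate $\prec_{0}$ on $\mathcal{O}_{0}$ formalised in $\PV$ (and hence in $\PA$) coincides, as a definable well-ordering of order type $\epsilon_0$, with one of the standard Cantor-normal-form notations used in the proof-theoretic literature. Since the representation in Subsection \ref{Notation} is built directly from Cantor normal forms and the relevant basic properties (totality, discreteness, ordinal arithmetic identities) are already shown to be $\PV$-provable, this identification is routine and $\PA$ can carry out Gentzen's argument internally to obtain $\TI(\prec_\beta)$ for each constant $\beta \prec \epsilon_0$.
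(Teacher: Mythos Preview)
Your approach is correct and is exactly the implicit argument behind the paper's corollary: the paper states Corollary~\ref{MainCorConcrete} without proof, as a direct specialization of Corollary~\ref{MainCor} to $\alpha=\epsilon_0$, relying on Gentzen's theorem that $\PA$ already proves $\TI(\prec_\beta)$ for every $\beta\prec\epsilon_0$ in the standard Cantor-normal-form notation, which is precisely what $\mathfrak{O}_0$ formalizes. Your remark that one must check the concrete representation $\mathfrak{O}_0$ matches the notation used in Gentzen's argument is a fair point of care, but as you note it is routine since $\mathfrak{O}_0$ is built directly from Cantor normal forms.
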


\subsection{Ordinal Local Search Programs} \label{OrdinalLocalSearch}

In this subsection, we will first introduce the notion of an ordinal local search program as a formalized version of the transfinite ptime modifications over an initial ptime value that we explained before. We will then use these programs to witness some provable statements in the theory $\PA+\bigcup_{\beta \in \mathcal{O}}\TI(\prec_{\beta})$. 

\begin{definition}
Let $T$ be a theory over the language $\mathcal{L}_{\PV}$. A \emph{total search problem of $T$} is a quantifier-free formula $A(\bar{x}, \bar{y})$ such that $T \vdash \forall \bar{x} \exists \bar{y} A(\bar{x}, \bar{y})$. A total search problem is called an \emph{$\NP$-search problem}, if there are sequences of polynomials $\bar{r}$ such that $\PV \vdash A(\bar{x}, \bar{y}) \to |\bar{y}| \leq \bar{r}(|\bar{x}|)$, where $|\bar{y}| \leq \bar{r}(|\bar{x}|)$ is an abbreviation for $\bigwedge_i (|y_i| \leq r_i(|\bar{x}|))$. We denote the class of all these total search (resp., $\NP$-search) problems of $T$ by $\TSP(T)$ (resp. $\TFNP(T)$).
\end{definition}

\begin{definition}\label{t30-13}
Let $\alpha$ be an ordinal, $\mathfrak{O}$ be its ptime representation, $A(\bar{x}, \bar{y})$ be a quantifier-free formula in $\mathcal{L}_{\PV}$ and $\beta \in \mathcal{O}$. By an \emph{$\LS(\preceq_{\beta})$-program for $A(\bar{x}, \bar{y})$}, we mean the following data:
an initial sequence of $\mathcal{L}_{\PV}$-terms $\bar{i}(\bar{x})$,
a quantifier-free $\mathcal{L}_{\PV}$-formula $G(\gamma, \bar{x}, \bar{z})$,
a sequence of $\mathcal{L}_{\PV}$-terms $\bar{N}(\gamma, \bar{x}, \bar{z})$,
an $\mathcal{L}_{\PV}$-term $q(\gamma, \bar{x}, \bar{z})$,
a sequence of $\mathcal{L}_{\PV}$-terms $\bar{p}(\bar{x}, \bar{z})$,
such that:
\begin{description}
\item[$\bullet$]
$\PV \vdash G(\beta, \bar{x}, \bar{i}(\bar{x}))$,
\item[$\bullet$]
$\PV \vdash \gamma \neq 0 \to q(\gamma, \bar{x}, \bar{z}) \prec \gamma$,
\item[$\bullet$]
$\PV \vdash \gamma \neq 0 \to [G(\gamma, \bar{x}, \bar{z}) \to G(q(\gamma, \bar{x}, \bar{z}), \bar{x}, \bar{N}(\gamma, \bar{x}, \bar{z}))]$,
\item[$\bullet$]
$\PV \vdash G(0, \bar{x}, \bar{z}) \to A(\bar{x}, \bar{p}(\bar{x}, \bar{z})) $.
\end{description}
By $\LS(\preceq_{\beta})$, we mean the class of all formulas $A(\bar{x}, \bar{y})$ for which there exists a $\LS(\preceq_{\beta})$-program. By $\PLS(\preceq_{\beta})$, we mean the class $\LS(\preceq_{\beta}) \cap \TFNP(Th(\mathbb{N}))$.
\end{definition}

Membership $A(\bar{x}, \bar{y}) \in \LS(\preceq_{\beta})$ implies $\forall \bar{x} \exists \bar{y}A(\bar{x}, \bar{y})$ and the $\LS(\preceq_{\beta})$-program actually provides an algorithm to compute $\bar{y}$ from $\bar{x}$. To see this, denote $G(\gamma, \bar{x}, \bar{z})$ by $G_{\gamma}$. The algorithm starts at the level $\beta$ with an initial value $\bar{i}(\bar{x})$ satisfying the property $G_{\beta}$. Then, using the feasible function $q$, it finds a lower level to go to and uses the modification $\bar{N}$ to update any value with the property $G_{\gamma}$ to a value satisfying the property $G_{q(\gamma)}$. Finally, reaching the zeroth level, the algorithm uses $\bar{p}$ to compute $\bar{y}$ satisfying $A$ from any value with the property $G_0$. 

The next theorem uses $\LS(\preceq_{\beta})$-programs ($\PLS(\preceq_{\beta})$-programs) to witness the total search ($\NP$-search) problems of $\PA+\bigcup_{\beta \in \mathcal{O}}\TI(\prec_{\beta})$. The idea is using Herbrand's theorem, Theorem \ref{Herbrand}, applied on $\PV$ to push the data extraction of Corollary \ref{MainCor} a bit further to reach an ordinal local search program for total search problems.

\begin{theorem}\label{OrdFlow}
Let $\alpha$ be an ordinal with the ptime representation $\mathfrak{O}$. Then
    $\TSP(\PA+\bigcup_{\beta \in \mathcal{O}}\TI(\prec_{\beta}))= \bigcup_{\beta \in \mathcal{O}} \LS(\preceq_{\beta})$ and 
    $\TFNP(\PA+\bigcup_{\beta \in \mathcal{O}}\TI(\prec_{\beta}))= \bigcup_{\beta \in \mathcal{O}} \PLS(\preceq_{\beta})$.
\end{theorem}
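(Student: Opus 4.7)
The plan is to reduce the theorem to Corollary \ref{MainCor} combined with Herbrand's theorem (Theorem \ref{Herbrand}) for the universal theory $\PV$; I will prove the $\TSP$-equality first and then observe that the $\TFNP$-equality follows formally. For the easy inclusion $\bigcup_\beta \LS(\preceq_\beta) \subseteq \TSP(\PA+\bigcup_\beta \TI(\prec_\beta))$, given an $\LS(\preceq_\beta)$-program with data $(\bar{i}, G, \bar{N}, q, \bar{p})$ for $A(\bar{x}, \bar{y})$, I would reason inside $\PA + \TI(\prec_{\beta+1})$ via transfinite induction on $\gamma \preceq \beta$ applied to $B(\gamma, \bar{x}) := \forall \bar{z}\, [G(\gamma, \bar{x}, \bar{z}) \to \exists \bar{y}\, A(\bar{x}, \bar{y})]$. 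The base case $\gamma = 0$ follows from the output clause producing $\bar{p}(\bar{x}, \bar{z})$, while the step at $\gamma \neq 0$ uses the descent clause $G(\gamma, \bar{x}, \bar{z}) \to G(q(\gamma, \bar{x}, \bar{z}), \bar{x}, \bar{N}(\gamma, \bar{x}, \bar{z}))$ together with $q(\gamma, \bar{x}, \bar{z}) \prec \gamma$. Instantiating $B$ at $\gamma = \beta$ with $\bar{z} = \bar{i}(\bar{x})$ and invoking $G(\beta, \bar{x}, \bar{i}(\bar{x}))$ yields $\forall \bar{x}\, \exists \bar{y}\, A(\bar{x}, \bar{y})$.

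For the converse, suppose $A(\bar{x}, \bar{y}) \in \TSP(\PA+\bigcup_\beta \TI(\prec_\beta))$. Then the theory proves $\forall \bar{y}\, \neg A(\bar{x}, \bar{y}) \to \bot$, an implication between $\forall_1$-formulas (recall that $\bot$ is counted among the atomics). Corollary \ref{MainCor} therefore produces an $\alpha$-flow $(H(\gamma, \bar{x}), \beta)$ between them. Placing $H(\gamma, \bar{x})$ into prenex form as $\forall \bar{z}\, H'(\gamma, \bar{x}, \bar{z})$ with $H'$ quantifier-free, the three flow clauses unpack, after contrapositives, into the $\PV$-provability of $\exists \bar{z}\, \neg H'(\beta, \bar{x}, \bar{z})$, of $\forall \bar{z}\, [\neg H'(0, \bar{x}, \bar{z}) \to \exists \bar{y}\, A(\bar{x}, \bar{y})]$, and of $\forall\, 1 \preceq \delta \preceq \beta\, \forall \bar{z}\, [\neg H'(\delta, \bar{x}, \bar{z}) \to \exists \gamma \prec \delta\, \exists \bar{z}'\, \neg H'(\gamma, \bar{x}, \bar{z}')]$. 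I would then invoke Herbrand (Theorem \ref{Herbrand}) --- legitimate because $\PV$ is universally axiomatized --- to extract an $\mathcal{L}_{\PV}$-term $\bar{i}(\bar{x})$ witnessing the first, a term $\bar{p}(\bar{x}, \bar{z})$ witnessing the second, and terms $q(\delta, \bar{x}, \bar{z}), \bar{N}(\delta, \bar{x}, \bar{z})$ witnessing the third. Setting $G(\gamma, \bar{x}, \bar{z}) := (\gamma \preceq \beta) \wedge \neg H'(\gamma, \bar{x}, \bar{z})$, the tuple $(\bar{i}, G, \bar{N}, q, \bar{p})$ will then verify Definition \ref{t30-13}.

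The one delicate point --- what I regard as the main, albeit minor, obstacle --- is that the Herbrand extraction only guarantees $q(\delta, \bar{x}, \bar{z}) \prec \delta$ conditionally on the antecedent of the third clause, whereas Definition \ref{t30-13} demands $q(\gamma, \bar{x}, \bar{z}) \prec \gamma$ provably for \emph{every} $\gamma \neq 0$. I would patch this by redefining $q$ (and $\bar{N}$ accordingly) to return $0$ (respectively $\bar{z}$) on inputs with $\gamma \succ \beta$ or with $G(\gamma, \bar{x}, \bar{z})$ false, so that $q \prec \gamma$ holds unconditionally for $\gamma \neq 0$, while the descent clause $G(\gamma) \to G(q(\gamma))$ still follows from the Herbrand witness in the relevant regime $0 \prec \gamma \preceq \beta$ where $G$ actually holds. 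Finally, the $\TFNP$-equality is automatic: since $\PLS(\preceq_\beta) = \LS(\preceq_\beta) \cap \TFNP(\mathrm{Th}(\mathbb{N}))$ and the polynomial bound on $\bar{y}$ is intrinsic to the formula $A$, intersecting both sides of the $\TSP$-equality with the $\NP$-search condition and using the soundness of $\PA+\bigcup_\beta \TI(\prec_\beta)$ yields the stated equality.
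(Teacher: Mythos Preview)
Your proposal is correct and, for the substantive direction $\TSP \subseteq \bigcup_\beta \LS(\preceq_\beta)$, matches the paper's proof almost exactly: both obtain an $\alpha$-flow from $\forall\bar y\,\neg A$ to $\bot$ via Corollary~\ref{MainCor}, write its body in prenex form, negate, apply Herbrand's theorem to extract the terms $\bar i,\bar N,q,\bar p$, and patch $q$ by a case distinction so that the descent condition $q(\gamma,\bar x,\bar z)\prec\gamma$ holds unconditionally for $\gamma\neq 0$. Your identification of this patch as the only delicate point is exactly right, and your fix is the paper's fix.

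The one genuine difference is the easy direction $\LS(\preceq_\beta)\subseteq\TSP$. You argue directly inside $\PA+\TI(\prec_{\beta+1})$ by transfinite induction on the formula $B(\gamma,\bar x)=\forall\bar z\,[G(\gamma,\bar x,\bar z)\to\exists\bar y\,A(\bar x,\bar y)]$, which is perfectly valid since the theory admits transfinite induction for arbitrary formulas. The paper instead stays within the flow framework: from the $\LS$-program it builds the $\forall_1$-formula $H(\gamma,\bar x)=\forall\bar z\,\neg G(\gamma,\bar x,\bar z)\wedge\forall\bar y\,\neg A(\bar x,\bar y)$, verifies that $(H,\beta)$ is an $\alpha$-flow from $\forall\bar y\,\neg A$ to $\bot$, and then invokes Corollary~\ref{MainCor} once more. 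Your route is shorter and more transparent; the paper's route has the aesthetic advantage of making the correspondence between $\LS$-programs and $\alpha$-flows visible in both directions, so that the whole theorem is seen to factor cleanly through Corollary~\ref{MainCor}. Either approach is fine. Your deduction of the $\TFNP$ equality by intersecting with the $\NP$-search condition and invoking soundness is also correct and is what the paper means by ``the second is just a consequence.''
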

\begin{proof}
We only prove the first equality. The second is just a consequence. For the first direction, assume that $A(\bar{x}, \bar{y})$ has a $\LS(\preceq_{\beta})$-program. Set $H(\gamma, \bar{x})=\forall \bar{z} \neg G(\gamma, \bar{x}, \bar{z}) \wedge \forall \bar{y} \neg A(\bar{x}, \bar{y})$ and note that $H \in \forall_1$. We claim that $(H(\gamma, \bar{x}), \beta)$ is an $\alpha$-flow from $\forall \bar{y} \neg A(\bar{x}, \bar{y})$ to $\bot$. First, as $\PV \vdash G(0, \bar{x}, \bar{z}) \to A(\bar{x}, \bar{p}(\bar{x}, \bar{z})) $, we have $\PV \vdash \forall \bar{y} \neg A(\bar{x}, \bar{y}) \to \forall \bar{z} \neg G(0, \bar{x}, \bar{z})$ and hence $\PV \vdash \forall \bar{y} \neg A(\bar{x}, \bar{y}) \leftrightarrow H(0, \bar{x})$. Secondly, as $\PV \vdash G(\beta, \bar{x}, \bar{i}(\bar{x}))$, we reach $\PV \vdash \forall \bar{z} \neg G(\beta, \bar{x}, \bar{z}) \leftrightarrow \bot$ and hence $\PV \vdash \bot \leftrightarrow H(\beta, \bar{x})$. Finally, using $\PV \vdash \gamma \neq 0 \to q(\gamma, \bar{x}, \bar{z}) \prec \gamma$,
and
\[
\PV \vdash \gamma \neq 0 \to [G(\gamma, \bar{x}, \bar{z}) \to G(q(\gamma, \bar{x}, \bar{z}), \bar{x}, \bar{N}(\gamma, \bar{x}, \bar{z}))],
\]
it is easy to see that 
\[
\PV \vdash \forall \; 1 \preceq \delta \preceq \beta \; [\neg G(q(\delta, \bar{x}, \bar{z}), \bar{x}, \bar{N}(\delta, \bar{x}, \bar{z})) \rightarrow \neg G(\delta, \bar{x}, \bar{z})]
\]
and hence we reach 
\[
\PV \vdash \forall \; 1 \preceq \delta \preceq \beta \; [\forall \gamma \prec \delta \; \forall \bar{z} \neg G(\gamma, \bar{x}, \bar{z}) \rightarrow \forall \bar{z} \neg G(\delta, \bar{x}, \bar{z})].
\]
 The latter implies $
\PV \vdash \forall \; 1 \preceq \delta \preceq \beta \; [\forall \gamma \prec \delta \; H(\gamma, \bar{x}) \rightarrow H(\delta, \bar{x})]$. Therefore, $(H(\gamma, \bar{x}), \beta)$ is an $\alpha$-flow from $\forall \bar{y} \neg A(\bar{x}, \bar{y})$ to $\bot$.
Hence, $\PA+\bigcup_{\beta \in \mathcal{O}}\TI(\prec_{\beta}) \vdash \forall \bar{y} \neg A(\bar{x}, \bar{y}) \to \bot$, by Corollary \ref{MainCor} and thus, we reach $\PA+\bigcup_{\beta \in \mathcal{O}}\TI(\prec_{\beta}) \vdash \forall \bar{x} \exists \bar{y} \, A(\bar{x}, \bar{y})$. For the converse, assume that $\PA+\bigcup_{\beta \in \mathcal{O}}\TI(\prec_{\beta}) \vdash \forall \bar{x} \exists \bar{y} A(\bar{x}, \bar{y})$, where $A(\bar{x}, \bar{y}) \in \mathcal{L}_{\PV}$ is quantifier-free. As $\PA+\bigcup_{\beta \in \mathcal{O}}\TI(\prec_{\beta}) \vdash \forall \bar{y} \neg A(\bar{x}, \bar{y}) \to \bot$, by Corollary \ref{MainCor}, $\forall \bar{y} \neg A(\bar{x}, \bar{y}) \rhd_{\alpha} \bot$. Hence, there exist $H(\gamma, \bar{x}) \in \forall_1$ and $\beta \in \mathcal{O}$ such that 
$\PV \vdash \forall \bar{y} \neg A(\bar{x}, \bar{y}) \leftrightarrow H(0, \bar{x})$, $\PV \vdash H(\beta, \bar{x}) \leftrightarrow \bot$ and
\[
\PV \vdash \forall \; 1 \preceq \delta \preceq \beta \; [\forall \gamma \prec \delta \; H(\gamma, \bar{x}) \rightarrow H(\delta, \bar{x})].
\]
As $H \in \forall_1$, there exists a quantifier-free formula $I(\gamma, \bar{x}, \bar{z})$ such that $H(\gamma, \bar{x})$ and $\forall \bar{z} I(\gamma, \bar{x}, \bar{z})$ are equivalent over $\PV$. On the other hand, as the implications are provable in $\PV$, we can witness the existential quantifiers by ptime functions. Hence, there are $\mathcal{L}_{\PV}$-terms $\bar{Y}(\bar{x}, \bar{z})$, $\bar{Z}(\gamma, \bar{x}, \bar{z})$, $\Delta(\gamma, \bar{x}, \bar{z})$ and $\bar{W}(\bar{x})$ such that
\begin{itemize}
\item[$\bullet$]
$\PV \vdash \neg A(\bar{x}, \bar{Y}(\bar{x}, \bar{z})) \rightarrow I(0, \bar{x}, \bar{z})$,
\item[$\bullet$]
$\PV \vdash  I(\beta, \bar{x}, \bar{W}(\bar{x})) \rightarrow \bot$,
\item[$\bullet$]
$\PV \vdash \forall 1 \preceq \delta \preceq \beta \; [[(\Delta(\delta, \bar{x}, \bar{z}) \prec \delta \rightarrow I(\Delta(\delta, \bar{x}, \bar{z}), \bar{x}, \bar{Z}(\delta, \bar{x}, \bar{z}))] \rightarrow  I(\delta, \bar{x}, \bar{z})]$. 
\end{itemize}
Define $G(\delta, \bar{x}, \bar{z})=\neg I(\delta, \bar{x}, \bar{z}) \wedge (\delta \preceq \beta)$,
\[
q(\delta, \bar{x}, \bar{z})=
\begin{cases}
\Delta(\delta, \bar{x}, \bar{z}) & \neg I(\delta, \bar{x}, \bar{z}) \wedge (\delta \preceq \beta) \\
0 & \text{otherwise}
\end{cases} 
\]
$\bar{i}(\bar{x})= \bar{W}(\bar{x})$ and $\bar{p}(\bar{x}, \bar{z})=\bar{Y}(\bar{x}, \bar{z})$. It is easy to see that this new data is an $\LS(\preceq_{\beta})$-program for $A(\bar{x}, \bar{y})$.
\end{proof}

Applying Theorem \ref{OrdFlow} to $\alpha=\epsilon_0$, we reach the following Corollary, originally proved in \cite{Be}.

\begin{corollary}\label{ConcreteCor}
Let $\mathfrak{O}_{0}$ be the ptime representation of the ordinal $\epsilon_0$ introduced in Subsection \ref{Notation}. Then
    $\TSP(\PA)= \bigcup_{\beta \in \mathcal{O}_{0}} \LS(\preceq_{\beta})$ and 
    $\TFNP(\PA)= \bigcup_{\beta \in \mathcal{O}_{0}} \PLS(\preceq_{\beta})$.
\end{corollary}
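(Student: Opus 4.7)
The plan is to reduce Corollary \ref{ConcreteCor} to Theorem \ref{OrdFlow} by specialising $\alpha$ to $\epsilon_0$ and using the ptime representation $\mathfrak{O}_0$ introduced in Subsection \ref{Notation}. Applying Theorem \ref{OrdFlow} directly yields
\[
\TSP(\PA+\textstyle\bigcup_{\beta \in \mathcal{O}_0}\TI(\prec_\beta))=\textstyle\bigcup_{\beta \in \mathcal{O}_0}\LS(\preceq_\beta)
\]
and the analogous equality for $\TFNP$ and $\PLS(\preceq_\beta)$. Hence the corollary is equivalent to the statement that the extension $\PA+\bigcup_{\beta \in \mathcal{O}_0}\TI(\prec_\beta)$ and $\PA$ prove the same total-search (and $\NP$-search) problems, i.e., agree on $\Pi^0_2$-sentences over $\mathcal{L}_{\PV}$.

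One direction is immediate: every theorem of $\PA$ is a theorem of the extension, so $\TSP(\PA) \subseteq \TSP(\PA+\bigcup_{\beta \in \mathcal{O}_0}\TI(\prec_\beta))$, and similarly for $\TFNP$. For the reverse inclusion I would invoke Gentzen's classical result that $\PA$ proves $\TI(\prec_\beta)$ for every $\beta \prec \epsilon_0$ and every formula. Because the paper's convention is that $\PA$ denotes $\PV$ extended by induction for \emph{all} $\mathcal{L}_{\PV}$-formulas (and is a conservative extension of ordinary $\PA$), Gentzen's theorem yields transfinite induction up to any $\beta \prec \epsilon_0$ for exactly the formulas appearing in $\TI(\prec_\beta)$. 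Consequently, every axiom of $\PA+\bigcup_{\beta \in \mathcal{O}_0}\TI(\prec_\beta)$ is already provable in $\PA$, so the two theories coincide, and the desired equalities follow.

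The step I expect to be the only real point of care is making sure that Gentzen's provability of transfinite induction transfers to the specific ptime representation $\mathfrak{O}_0$ rather than to some standard primitive-recursive presentation. Since $\mathfrak{O}_0$ is defined from the Cantor normal form and since $\PV$ already proves the basic ordinal-arithmetic identities packaged in Definition \ref{t3-4}, one can verify inside $\PA$ that for every fixed $\beta \in \mathcal{O}_0$ the initial segment $(\mathcal{O}_0\!\upharpoonright\!\beta,\prec_0)$ is isomorphic to the standard segment of the ordinals below $\beta$. Pulling Gentzen's theorem back along this $\PA$-provable isomorphism gives $\TI(\prec_\beta)$ in the sense of $\mathfrak{O}_0$ for any $\mathcal{L}_{\PV}$-formula, which closes the argument.
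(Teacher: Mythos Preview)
Your argument is correct and matches the paper's intended approach: the paper simply states ``Applying Theorem \ref{OrdFlow} to $\alpha=\epsilon_0$'' without further comment, so the identification of $\PA+\bigcup_{\beta\in\mathcal{O}_0}\TI(\prec_\beta)$ with $\PA$ via Gentzen's theorem is left entirely implicit there. Your discussion of transferring Gentzen's result to the specific representation $\mathfrak{O}_0$ is more careful than the paper itself, which takes this as understood.
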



\section{$k$-Flows and Bounded Arithmetic} \label{kFlowsAndArithmetic}
In this section, we will modify the method developed for the strong theories of arithmetic in Section \ref{OrdinalFlowsandArithmetic} to also cover the bounded and hence weaker theories of arithmetic. The structure of the present section is similar to that of Section \ref{OrdinalFlowsandArithmetic}. After recalling the usual sequent calculi for the theories $S^k_2$ and $T^k_2$ in Subsection \ref{SequntCalculiForBounded}, the next subsection, Subsection \ref{SubsectionKFlows} will be devoted to investigate a suitable version of a flow for bounded arithmetic called a $k$-flow. Roughly speaking, a $k$-flow is an exponentially long uniform sequence of $\PV$-provable implications between $\mathcal{L}_{\PV}$-formulas in the class $\hat{\Pi}^b_k$. After proving some basic properties of $k$-flows, we will conclude the subsection by proving a witnessing theorem, transforming the proofs of the implications between $\hat{\Pi}^b_k$-formulas in $S^k_2$ and $T^k_2$ to some types of $k$-flows. Finally, in Subsection \ref{SubsectionReduction}, we will introduce the appropriate notion of a local search program to witness the $\PV$-provable implications further and find a complete witnessing for the theories $S^k_2$ and $T^k_2$.
\subsection{Sequent Calculi for Bounded Arithmetic} \label{SequntCalculiForBounded}

To recall the usual sequent calculi for $S^k_2$ and $T^k_2$, introduced in \cite{BussThesis}, first consider the following rules:
\begin{flushleft}
  		\textbf{Bounded Quantifier Rules:}
\end{flushleft}
\begin{center}
  	\begin{tabular}{c c}
  		\AxiomC{$\Gamma, A(s)  \Rightarrow \Delta $}
  		\RightLabel{{\footnotesize $L\forall^{\leq} $}} 
  		\UnaryInfC{$ \Gamma, s \leq t, \forall y \leq t \; A(y) \Rightarrow \Delta $}
  		\DisplayProof
	  		&
	   	\AxiomC{$\Gamma, z \leq t  \Rightarrow A(z), \Delta $}
  		\RightLabel{{\footnotesize $R\forall^{\leq} $}} 
  		\UnaryInfC{$ \Gamma \Rightarrow \forall y \leq t \; A(y), \Delta $}
  		\DisplayProof
	   		\\[3 ex]
   		\AxiomC{$\Gamma, z \leq t, A(z) \Rightarrow \Delta $}
  		\RightLabel{{\footnotesize $L\exists^{\leq} $}} 
  		\UnaryInfC{$ \Gamma, \exists y \leq t \; A(y) \Rightarrow \Delta $}
  		\DisplayProof
	   		&
   		\AxiomC{$\Gamma \Rightarrow A(s), \Delta  $}
  		\RightLabel{{\footnotesize $R\exists^{\leq} $}} 
  		\UnaryInfC{$ \Gamma, s \leq t, \Rightarrow \exists y \leq t \; A(y), \Delta$}
  		\DisplayProof
   			\\[3 ex]
	\end{tabular}
\end{center}
\begin{flushleft}
  		\textbf{Induction Rules:}
\end{flushleft}

\begin{center}
    	\begin{tabular}{c c}
\AxiomC{$\Gamma, A(\lfloor \frac{z}{2} \rfloor) \Rightarrow A(z), \Delta$}
\RightLabel{\footnotesize$\PInd_k$} 
		\UnaryInfC{$\Gamma, A(0) \Rightarrow A(t), \Delta$}
		\DisplayProof
  &
\AxiomC{$\Gamma, A(z) \Rightarrow A(z+1), \Delta$}
\RightLabel{\footnotesize$\Ind_k$} 
		\UnaryInfC{$\Gamma, A(0) \Rightarrow A(t), \Delta$}
		\DisplayProof
	\end{tabular}
\end{center}
In the rules $(R\forall^{\leq})$ and $(L\exists^{\leq})$ as well as in the induction rules, the variable $z$ should not appear in the consequence of the rule. Moreover, in the induction rules $(\PInd_k)$ and $(\Ind_k)$, the index $k$ means that the formula $A(z)$ is restricted to the class $\hat{\Pi}^b_k$.\\
The system $\mathbf{LS^k_2}$ (resp. $\mathbf{LT^k_2}$) for $S^k_2$ (resp. $T^k_2$) is defined as the system $\mathbf{LPV}$ plus the bounded quantifier rules and the rule $(\PInd_k)$ (resp. $(\Ind_k)$). 
For some technical reasons, we prefer to work with the alternative systems where the cut and the induction rules are weakened. Define the system $\mathbf{wLS^k_2}$ (resp. $\mathbf{wLT^k_2}$) similar to $\mathbf{LS^k_2}$ (resp. $\mathbf{LT^k_2}$) with the difference that in the former the quantifier rules in $\mathbf{LPV}$ are \emph{omitted} and the cut and the induction rule $(\PInd_k)$ (resp. $(\Ind_k)$) are \emph{replaced} by the weak cut and the weak induction rule $(\wPInd_k)$ (resp. $(\wInd_k)$) depicted below:
\begin{center}
	\begin{tabular}{c c}
	    \AxiomC{$\Gamma \Rightarrow A$}
	    \AxiomC{$A \Rightarrow \Delta$}
	    \RightLabel{\footnotesize$wCut$} 
		\BinaryInfC{$\Gamma \Rightarrow \Delta$}
		\DisplayProof
	\end{tabular}
\end{center}	
\begin{center}
	\begin{tabular}{c c}
 \AxiomC{$\Gamma, A(\lfloor\frac{z}{2}\rfloor) \Rightarrow A(z)$}
	    \RightLabel{\footnotesize$\wPInd_k$} 
		\UnaryInfC{$\Gamma, A(0) \Rightarrow A(s)$}
		\DisplayProof
  \hspace{7pt}
  &
	    \AxiomC{$\Gamma, A(z) \Rightarrow A(z+1)$}
	    \RightLabel{\footnotesize$\wInd_k$} 
		\UnaryInfC{$\Gamma, A(0) \Rightarrow A(s)$}
		\DisplayProof
	
	\end{tabular}
\end{center}	
In the weak induction rules, we have the similar constraints as before, namely that $A \in \hat{\Pi}^b_k$ and $z$ does not appear in the consequence of the rules. Note that the only point modified in the weak induction rules is the missing context $\Delta$. 

The following theorem ensures that the system $\mathbf{wLS^k_2}$ (resp. $\mathbf{wLT^k_2}$) is complete for the sequents of $\hat{\Pi}^b_k$-formulas. Notice that the lemma does not claim the full completeness as the system  $\mathbf{wLS^k_2}$ (resp. $\mathbf{wLT^k_2}$) is clearly weak to introduce any unbounded quantifier. 
\begin{lemma}\label{kCutConsequence}
For any $\Gamma \cup \Delta \subseteq \hat{\Pi}^b_k$:
\begin{description}
    \item[$\bullet$]
If $S^k_2 \vdash \bigwedge \Gamma \rightarrow \bigvee \Delta$, then $\Gamma \Rightarrow \Delta$ has a $\mathbf{wLS^k_2}$-proof only consisting of $\hat{\Pi}^b_k$-formulas.
     \item[$\bullet$]
If $T^k_2 \vdash \bigwedge \Gamma \rightarrow \bigvee \Delta$, then $\Gamma \Rightarrow \Delta$ has a $\mathbf{wLT^k_2}$-proof only consisting of $\hat{\Pi}^b_k$-formulas.
\end{description}
\end{lemma}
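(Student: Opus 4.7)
The proof is directly parallel to that of Lemma~\ref{CutConsequence}, with $\hat{\Pi}^b_k$ in place of $\forall_1$, $\mathbf{LS^k_2}$ (resp.\ $\mathbf{LT^k_2}$) in place of $\mathbf{G}_0$, and $\mathbf{wLS^k_2}$ (resp.\ $\mathbf{wLT^k_2}$) in place of $\mathbf{G}_1$. The plan is first to normalize a given $S^k_2$-proof (resp.\ $T^k_2$-proof) of $\bigwedge \Gamma \to \bigvee \Delta$ into a proof in $\mathbf{LS^k_2}$ (resp.\ $\mathbf{LT^k_2}$) consisting only of $\hat{\Pi}^b_k$-formulas, and then to simulate this proof step by step inside the corresponding weak system.

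For the normalization, one appeals to the standard free-cut elimination for bounded arithmetic (see \cite{BussThesis,BussProofTheory}). Since both the end-sequent and the induction formulas lie in $\hat{\Pi}^b_k$, the surviving cuts anchor to $\hat{\Pi}^b_k$-formulas, and together with the closure of $\hat{\Pi}^b_k$ under $\wedge$, $\vee$, and bounded universal quantification, one may arrange that every formula appearing in the resulting proof is in $\hat{\Pi}^b_k$. In particular, the unbounded quantifier rules inherited from $\mathbf{LPV}$ are never used, which is exactly what is required since $\mathbf{wLS^k_2}$ and $\mathbf{wLT^k_2}$ omit them.

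For the simulation, axioms, propositional rules, bounded quantifier rules, and structural rules other than cut appear verbatim in the weak system, so they transfer without modification. The cut rule is derivable from weak cut by precisely the construction displayed in the proof of Lemma~\ref{CutConsequence}, and all intermediate formulas there are built from $\hat{\Pi}^b_k$-formulas by $\wedge$ and $\vee$, hence remain in $\hat{\Pi}^b_k$. For the induction rule $(\PInd_k)$ applied to $A \in \hat{\Pi}^b_k$ with a non-empty succedent context $\Delta$, we use the absorbed induction formula $A'(x) := A(x) \vee \bigvee \Delta$, which belongs to $\hat{\Pi}^b_k$ by closure under disjunction: from $\Gamma, A(\lfloor z/2 \rfloor) \Rightarrow A(z), \Delta$ one derives $\Gamma, A'(\lfloor z/2 \rfloor) \Rightarrow A'(z)$ by propositional manipulations (right disjunction introduction followed by case analysis on the left), then applies $(\wPInd_k)$ to obtain $\Gamma, A'(0) \Rightarrow A'(t)$, and finally recovers $\Gamma, A(0) \Rightarrow A(t), \Delta$ by a (derivable) cut with the $\mathbf{LPV}$-provable sequent $A(t) \vee \bigvee \Delta \Rightarrow A(t), \Delta$. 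The $T^k_2$-case is identical with $(\Ind_k)$ and $(\wInd_k)$ in place of $(\PInd_k)$ and $(\wPInd_k)$.

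The main technical obstacle is the normalization step, i.e., verifying that free-cut elimination can be carried out while keeping every formula inside $\hat{\Pi}^b_k$; this is a standard but somewhat delicate fact of bounded-arithmetic proof theory. Once it is granted, the simulation step is a direct adaptation of the arguments used to prove Lemma~\ref{CutConsequence}.
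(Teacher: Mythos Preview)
Your proposal is correct and follows essentially the same approach as the paper: invoke the standard free-cut elimination result to obtain an $\mathbf{LS^k_2}$- (resp.\ $\mathbf{LT^k_2}$-) proof consisting only of $\hat{\Pi}^b_k$-formulas and using only bounded quantifier rules, then simulate cut and induction by their weak versions exactly as in Lemma~\ref{CutConsequence}, absorbing the side context $\Delta$ into the induction formula via $A(x)\vee\bigvee\Delta$. The paper in fact skips the simulation details entirely, merely pointing back to Lemma~\ref{CutConsequence}; your write-up spells them out a bit more (and correctly adapts the induction step to the slightly simpler shape of $(\wPInd_k)$ and $(\wInd_k)$, where no analogue of the $(**)$ step is needed).
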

\begin{proof}
It is a well-known consequence of the cut reduction theorem for $\mathbf{LS^k_2}$ (resp. $\mathbf{LT^k_2}$) that if $\bigwedge \Gamma \to \bigvee \Delta$ is provable in $S^k_2$ (resp. $T^k_2$), it has a proof in $\mathbf{LS^k_2}$ (resp. $\mathbf{LT^k_2}$) only consisting of $\hat{\Pi}^b_k$-formulas and only using bounded quantifier rules instead of the usual unbounded quantifier rules in $\mathbf{LPV}$ \cite{BussThesis,JanBook}. Therefore, the only thing remained to prove is simulating the cut and the induction rules over $\hat{\Pi}^b_k$-formulas by their weak versions applied over the same family of formulas. This simulation is almost identical to the one presented in the proof of Lemma \ref{CutConsequence} and hence will be skipped here.
\end{proof}

\subsection{$k$-Flows} \label{SubsectionKFlows}
In this subsection, we will first introduce a $k$-flow as a  uniform term-length sequence of $\PV$-provable implications between $\hat{\Pi}^b_k$-formulas. Then, we will develop a high-level calculus for $k$-flows to witness the provability in theories $S^k_2$ and $T^k_2$.
\begin{definition}\label{kDefFlow}
Let $A(\bar{x}), B(\bar{x}) \in \hat{\Pi}^b_k$ be two $\mathcal{L}_{\PV}$-formulas and $t(\bar{x})$ be an $\mathcal{L}_{\PV}$-term. A \emph{$k$-flow} from $A(\bar{x})$ to $B(\bar{x})$ with the length $t(\bar{x})$ is a pair $(H(u, \bar{x}), t(\bar{x}))$, where $H(u, \bar{x}) \in \hat{\Pi}^b_k$ and:
\begin{description}
\item[$\bullet$]
$\PV \vdash H(0, \bar{x}) \leftrightarrow A(\bar{x})$.
\item[$\bullet$]
$\PV \vdash  H(t(\bar{x}), \bar{x}) \leftrightarrow B(\bar{x})$.
\item[$\bullet$]
$\PV \vdash \forall u < t(\bar{x}) \; [H(u, \bar{x}) \rightarrow H(u+1, \bar{x})]$.
\end{description}
A $k$-flow is called \emph{polynomial} if $t(\bar{x})=q(|\bar{x}|)$, for some polynomial $q$, where by equality, we mean the syntactical equality between the terms.
If there exists a $k$-flow from $A(\bar{x})$ to $B(\bar{x})$ with the length $t(\bar{x})$, we write $A(\bar{x}) \rhd^{t(\bar{x})}_{k} B(\bar{x})$. If we intend to emphasize on the existence of the $k$-flow regardless of its length, we write $A(\bar{x}) \rhd_k B(\bar{x})$ and if the $k$-flow is polynomial $A(\bar{x}) \rhd^p_{k} B(\bar{x})$. Moreover, if $\Gamma \cup \Delta \subseteq \hat{\Pi}^b_k$, by $\Gamma \rhd_k \Delta$ (resp. $\Gamma \rhd^p_k \Delta$), we mean $\bigwedge \Gamma \rhd_k \bigvee \Delta$ (resp. $\bigwedge \Gamma \rhd^p_k \bigvee \Delta$).
\end{definition}

Similar to the situation with the ordinal flows, it is also reasonable to provide a high-level calculus to work with the $k$-flows. The following series of lemmas realize this goal. 

\begin{lemma}\label{Padding} (Padding)
Let $A(\bar{x}), B(\bar{x}) \in \hat{\Pi}^b_k$ and $t(\bar{x}), s(\bar{x})$ be two $\mathcal{L}_\PV$-terms such that $\PV \vdash t(\bar{x}) \leq s(\bar{x})$. If $A(\bar{x}) \rhd_k^{t(\bar{x})} B(\bar{x})$, then $A(\bar{x}) \rhd_k^{s(\bar{x})} B(\bar{x})$. Therefore, without loss of generality, we can always assume that the length $t(\bar{x})$ of a $k$-flow is $\PV$-monotone, i.e., $\PV \vdash \bigwedge_{i=1}^n (x_i \leq y_i) \to t(\bar{x}) \leq t(\bar{y})$.
\end{lemma}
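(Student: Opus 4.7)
The plan is to construct a padded $k$-flow by extending the given one so that, once the clock exceeds the original length $t(\bar{x})$, the formula stays pinned at its final value $B(\bar{x})$. Concretely, starting from a $k$-flow $(H(u,\bar{x}), t(\bar{x}))$ from $A$ to $B$, I would set
\[
H'(u,\bar{x}) \; := \; H(\min(u, t(\bar{x})), \bar{x}),
\]
where $\min$ and $t$ are $\mathcal{L}_{\PV}$-terms. Since the class $\hat{\Pi}^b_k$ is closed under substitution of $\mathcal{L}_{\PV}$-terms into free variables (the substitution only refines what sits inside the bounded quantifier matrix), $H'$ still lies in $\hat{\Pi}^b_k$.

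The verification then splits into the three clauses of Definition \ref{kDefFlow}. For the endpoints I would use the $\PV$-provable identity $\min(0, t(\bar{x})) = 0$ together with the hypothesis $\PV \vdash t(\bar{x}) \leq s(\bar{x})$, which yields $\min(s(\bar{x}), t(\bar{x})) = t(\bar{x})$ in $\PV$; combined with the $\PV$-equivalences $H(0,\bar{x}) \leftrightarrow A(\bar{x})$ and $H(t(\bar{x}),\bar{x}) \leftrightarrow B(\bar{x})$, this gives $\PV \vdash H'(0, \bar{x}) \leftrightarrow A(\bar{x})$ and $\PV \vdash H'(s(\bar{x}), \bar{x}) \leftrightarrow B(\bar{x})$. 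For the step clause I would argue in $\PV$ by cases on whether $u < t(\bar{x})$ or $u \geq t(\bar{x})$: in the first case both minima advance by one and the step implication of the original flow applies verbatim; in the second case both minima equal $t(\bar{x})$, so $H'(u,\bar{x})$ and $H'(u+1,\bar{x})$ coincide as terms and the implication is trivial.

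For the monotonicity corollary, I would observe that any $\mathcal{L}_{\PV}$-term $t(\bar{x})$ admits a $\PV$-provable monotone upper bound: namely define $\tilde{t}(\bar{x}) := \max\{t(\bar{y}) : \bar{y} \leq \bar{x}\}$, which is ptime-computable by bounded search over a polynomial-size box of a ptime function, and hence available as an $\mathcal{L}_{\PV}$-term by Cobham's characterization. Then $\PV$ proves both $t(\bar{x}) \leq \tilde{t}(\bar{x})$ and $\bar{x} \leq \bar{y} \to \tilde{t}(\bar{x}) \leq \tilde{t}(\bar{y})$, and applying the first part of the lemma with $s := \tilde{t}$ replaces the flow's length by a monotone term. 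I do not anticipate any real obstacle here; the only mild annoyance is syntactic bookkeeping to ensure that $\min$, $\max$, and the substitution into $H$ behave as expected inside $\hat{\Pi}^b_k$ and inside $\PV$, but these are standard ptime manipulations already secured by the definition of $\PV$.
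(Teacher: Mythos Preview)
Your padding construction is correct and essentially the same as the paper's: the paper defines $H'(u,\bar{x})$ by cases as $H(u,\bar{x})$ for $u\le t(\bar{x})$ and $B(\bar{x})$ for $u>t(\bar{x})$, which is $\PV$-equivalent to your $H(\min(u,t(\bar{x})),\bar{x})$ since $H(t(\bar{x}),\bar{x})\leftrightarrow B(\bar{x})$ in $\PV$. The endpoint and step verifications are identical in spirit.

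However, your argument for the monotonicity corollary has a genuine gap. You propose $\tilde t(\bar{x}) := \max\{t(\bar{y}) : \bar{y}\le\bar{x}\}$ and claim it is ptime-computable ``by bounded search over a polynomial-size box''. But the set $\{\bar{y}:\bar{y}\le\bar{x}\}$ has $\prod_i(x_i+1)$ elements, which is exponential in $|\bar{x}|$, not polynomial; there is no reason the maximum of a ptime function over this range should itself be ptime, and Cobham's characterization does not give it to you. So $\tilde t$ is not available as an $\mathcal{L}_{\PV}$-term in general.

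The paper instead uses the standard bounded-arithmetic fact that every $\mathcal{L}_{\PV}$-term $t(\bar{x})$ satisfies $\PV\vdash t(\bar{x})\le 2^{q(|\bar{x}|)}$ for some polynomial $q$; the term $2^{q(|\bar{x}|)}$ is then the desired $\PV$-monotone upper bound, and one applies the first part of the lemma with $s(\bar{x})=2^{q(|\bar{x}|)}$. You should replace your $\tilde t$ with this bound.
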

\begin{proof}
Let $(H(u, \bar{x}),t(\bar{x}))$ be a $k$-flow from $A(\bar{x})$ to $B(\bar{x})$. Then,  define 
\[
H'(u, \bar{x})=
\begin{cases}
H(u, \bar{x}) &  u \leq t(\bar{x})\\
B(\bar{x}) & u > t(\bar{x})
\end{cases} 
\]
Notice that $H'(u, \bar{x}) \in \hat{\Pi}^b_k$.
It is easy to prove that $(H'(u, \bar{x}), s(\bar{x}))$ is a $k$-flow from $A(\bar{x})$ to $B(\bar{x})$. The only thing worth emphasizing is the role of 
the assumption $\PV \vdash t(\bar{x}) \leq s(\bar{x})$ in the proof. This assumption together with the definition of $H'(u, \bar{x})$ shows
$\PV \vdash  H'(s(\bar{x}), \bar{x}) \leftrightarrow B(\bar{x})$ which is one of the conditions of being a $k$-flow. This observation completes the proof of the first part of the claim. 
For its second part, note that for any term $t(\bar{x})$, there exists a polynomial $q$ such that $\PV \vdash t(\bar{x}) \leq 2^{q(|\bar{x}|)}$ \cite{BussThesis,JanBook}. As $2^{q(|\bar{x}|)}$ is $\PV$-monotone, it is enough to use the first part to extend a $k$-flow with the length $t(\bar{x})$ to a $k$-flow with the length $2^{q(|\bar{x}|)}$. For polynomial $k$-flows, as the length $t(\bar{x})$ is in the form $q(|\bar{x}|)$, for some polynomial $q$, it is already $\PV$-monotone and hence there is nothing to prove.
\end{proof}

\begin{lemma}\label{kImplicationToFlow} Let $A(\bar{x}), B(\bar{x}), C(\bar{x}) \in \hat{\Pi}^b_k$. Then:
\begin{description}
 \item[$(i)$]
If $\PV \vdash A(\bar{x}) \to B(\bar{x})$, then $A(\bar{x}) \rhd^p_k B(\bar{x})$.
 \item[$(ii)$]
If $A(\bar{x}) \rhd_k B(\bar{x}) $, then $A(\bar{x}) \circ C(\bar{x}) \rhd_k B(\bar{x}) \circ C(\bar{x}) $, for any $\circ \in \{\wedge, \vee\}$. A similar claim also holds for $\rhd^p_k$.
\end{description}
\end{lemma}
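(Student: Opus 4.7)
The plan is to mirror Lemma 4.5 from the ordinal setting, noting that the proof simplifies considerably here because the length of a $k$-flow is an $\mathcal{L}_{\PV}$-term rather than an ordinal, so no gluing or arithmetic of lengths is required. Both parts will be handled by exhibiting an explicit witness pair $(H(u, \bar{x}), t(\bar{x}))$ and verifying the three clauses of Definition \ref{kDefFlow} directly.

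For part $(i)$, I would take $t(\bar{x}) = 1$, viewed as the constant polynomial $q(|\bar{x}|)$ with $q \equiv 1$ so that the resulting flow is automatically polynomial, and set
\[
H(u, \bar{x}) \;\equiv\; (u = 0 \to A(\bar{x})) \wedge (u \neq 0 \to B(\bar{x})).
\]
Since $A, B \in \hat{\Pi}^b_k$, the atomic formula $u = 0$ and its negation lie in $\hat{\Pi}^b_k$, and $\hat{\Pi}^b_k$ is closed under $\wedge$ and $\vee$, we obtain $H \in \hat{\Pi}^b_k$. The boundary clauses $\PV \vdash H(0, \bar{x}) \leftrightarrow A(\bar{x})$ and $\PV \vdash H(1, \bar{x}) \leftrightarrow B(\bar{x})$ are immediate, and the step clause $\forall u < 1\,[H(u, \bar{x}) \to H(u+1, \bar{x})]$ reduces to the single instance $H(0, \bar{x}) \to H(1, \bar{x})$, which is exactly the hypothesis $A(\bar{x}) \to B(\bar{x})$.

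For part $(ii)$, starting from a $k$-flow $(H(u, \bar{x}), t(\bar{x}))$ witnessing $A \rhd_k B$, I would keep the length $t(\bar{x})$ unchanged and set $I(u, \bar{x}) = H(u, \bar{x}) \circ C(\bar{x})$ for the relevant connective $\circ \in \{\wedge, \vee\}$. Membership $I \in \hat{\Pi}^b_k$ follows from closure under $\wedge$ and $\vee$; the boundary clauses transfer from those for $H$ by attaching $C(\bar{x})$ to both sides of the $\PV$-equivalences; and the step implication $\PV \vdash H(u, \bar{x}) \to H(u+1, \bar{x})$ propagates verbatim to $I(u, \bar{x}) \to I(u+1, \bar{x})$ because $C(\bar{x})$ is independent of $u$ and is carried along on both sides. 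Since the length is unchanged, the polynomial case for $\rhd^p_k$ is inherited for free.

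There is no genuine obstacle; the only point meriting a moment's care is checking the class membership $H \in \hat{\Pi}^b_k$ in part $(i)$, which is immediate from the closure properties recalled in Section \ref{Preliminaries} (and trivial in the base case $k = 0$, where $H$ is already quantifier-free).
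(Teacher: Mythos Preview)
Your proposal is correct and takes essentially the same approach as the paper, which simply refers back to Lemma~\ref{ImplicationToFlow} and whose construction you have faithfully reproduced (with the harmless cosmetic change of writing $u\neq 0$ in place of $u=1$). The observation that the length is preserved in part~$(ii)$, so that the polynomial case comes for free, is exactly the point.
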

\begin{proof}
The proof is similar to that of Lemma \ref{ImplicationToFlow}.
\end{proof}

\begin{lemma}\label{Bounded}(Bounded variables)
Let $A(\bar{x}, y), B(\bar{x}, y) \in \hat{\Pi}^b_k$ be two $\mathcal{L}_{\PV}$-formulas and $s(\bar{x})$ be an $\mathcal{L}_{\PV}$-term (not depending on $y$). If $A(\bar{x}, y) \rhd_k B(\bar{x}, y)$, then there exists a formula $I(u, y, \bar{x}) \in \hat{\Pi}^b_k$ and an $\mathcal{L}_{\PV}$-term $r(\bar{x})$ (not depending on $y$) such that:
\begin{description}
\item[$\bullet$]
$\PV \vdash I(0, y, \bar{x}) \leftrightarrow A(\bar{x}, y)$.
\item[$\bullet$]
$\PV \vdash \forall y \leq s(\bar{x}) [I(r(\bar{x}), y, \bar{x}) \leftrightarrow B(\bar{x}, y)]$.
\item[$\bullet$]
$\PV \vdash I(u, y, \bar{x}) \rightarrow I(u+1, y, \bar{x})$.
\item[$\bullet$]
$\PV \vdash r(\bar{x}) \geq 1$.
\end{description}
If we also have $A(\bar{x}, y) \rhd^p_k B(\bar{x}, y)$, then the term $r(\bar{x})$ can be chosen in the form $q(|\bar{x}|)$, for some polynomial $q$.
\end{lemma}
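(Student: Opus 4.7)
The plan is to start with a witnessing $k$-flow $(H(u,\bar{x},y), t(\bar{x},y))$ from $A(\bar{x},y)$ to $B(\bar{x},y)$ and then surgically remove the $y$-dependence from the length, replacing it with a uniform bound $r(\bar{x})$ by ``freezing'' the flow at its endpoint once $u$ exceeds the actual $y$-dependent length. First, I would invoke Lemma \ref{Padding} to assume without loss of generality that $t$ is $\PV$-monotone in all its variables. Then, since $y \leq s(\bar{x})$, monotonicity of $t$ yields that $t(\bar{x}, y) \leq t(\bar{x}, s(\bar{x}))$ provably in $\PV$, so it is natural to set $r(\bar{x}) = t(\bar{x}, s(\bar{x}))+1$, which satisfies both $r(\bar{x}) \geq 1$ and $t(\bar{x},y) < r(\bar{x})$ for $y \leq s(\bar{x})$.

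Next I would define the padded formula
\[
I(u, y, \bar{x}) \;=\; \bigl(u \leq t(\bar{x}, y) \wedge H(u, \bar{x}, y)\bigr) \;\vee\; \bigl(t(\bar{x}, y) < u \wedge B(\bar{x}, y)\bigr).
\]
Since $H, B \in \hat{\Pi}^b_k$ and the bounds $u \leq t(\bar{x},y)$ and $t(\bar{x},y) < u$ are quantifier-free, closure of $\hat{\Pi}^b_k$ under conjunction and disjunction places $I$ in $\hat{\Pi}^b_k$. The four required $\PV$-provabilities then reduce to straightforward case analysis: $u = 0$ forces the first disjunct and recovers $H(0,\bar{x},y) \leftrightarrow A(\bar{x},y)$; for $y \leq s(\bar{x})$ and $u = r(\bar{x})$, we have $u > t(\bar{x},y)$, so the second disjunct collapses to $B(\bar{x},y)$; and the monotonicity step $I(u) \to I(u+1)$ splits into three subcases (both below, threshold crossing at $u = t(\bar{x},y)$, both above), where the middle case uses the $\PV$-provable equivalence $H(t(\bar{x},y), \bar{x}, y) \leftrightarrow B(\bar{x}, y)$ supplied by the original flow.

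For the polynomial refinement, assume $t(\bar{x}, y) = q(|\bar{x}|, |y|)$ syntactically. The term $s(\bar{x})$, being an $\mathcal{L}_\PV$-term, satisfies $\PV \vdash s(\bar{x}) \leq 2^{p(|\bar{x}|)}$ for some polynomial $p$, hence $\PV \vdash |s(\bar{x})| \leq p(|\bar{x}|)+1$. Taking $q'(|\bar{x}|) = q(|\bar{x}|, p(|\bar{x}|)+1) + 1$ gives a polynomial in $|\bar{x}|$ that $\PV$-provably dominates $t(\bar{x}, s(\bar{x}))$ (using the $\PV$-provable monotonicity of polynomials), so choosing $r(\bar{x}) = q'(|\bar{x}|)$ and repeating the construction of $I$ yields the polynomial version.

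The only delicate point in the argument is the second bullet: one must be careful that $I(r(\bar{x}), y, \bar{x}) \leftrightarrow B(\bar{x},y)$ is required only under the hypothesis $y \leq s(\bar{x})$, not unconditionally. Without the bound $y \leq s(\bar{x})$ there is no way to pin the length uniformly in $y$, which is precisely why this hypothesis appears in the statement; making sure that the construction of $r$ exploits this bound correctly (via $\PV$-monotonicity of $t$ after padding) is the main thing to verify, and everything else is routine bookkeeping in $\hat{\Pi}^b_k$.
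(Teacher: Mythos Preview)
Your proposal is correct and follows essentially the same approach as the paper: pad the flow so the length is $\PV$-monotone, freeze the flow at $B$ once $u$ exceeds the $y$-dependent length, and bound that length uniformly using $y \leq s(\bar{x})$. The only cosmetic differences are that the paper writes $I$ as a case-split rather than an explicit disjunction and takes $r(\bar{x}) = t(2^{q_s(|\bar{x}|)}, \bar{x})$ (bounding $s(\bar{x})$ first) rather than your $t(\bar{x}, s(\bar{x})) + 1$; both choices work, and your ``$+1$'' cleanly handles the $r(\bar{x}) \geq 1$ requirement without invoking the extra padding assumption $t \geq 1$ that the paper uses.
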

\begin{proof}
Assume $(H(u, y, \bar{x}), t(y, \bar{x}))$ is a $k$-flow from $A(\bar{x}, y)$ to $B(\bar{x}, y)$. Using Lemma \ref{Padding}, we can assume that $t(y, \bar{x})$ is $\PV$-monotone and $\PV \vdash t(y, \bar{x}) \geq 1$. Define
\[
I(u, y, \bar{x})=
\begin{cases}
H(u, y, \bar{x}) &  u \leq t(y, \bar{x})\\
B(y, \bar{x}) & u > t(y,\bar{x})
\end{cases} 
\]
and notice that $I(u, y, \bar{x}) \in \hat{\Pi}^b_k$.
Recall from the basic facts in bounded arithmetic that for the term $s(\bar{x})$, there is a polynomial $q_s$ such that $\PV \vdash |s(\bar{x})| \leq q_s(|\bar{x}|)$ \cite{BussThesis,JanBook}. Define $r(\bar{x})=t(2^{q_s(|\bar{x}|)}, \bar{x})$ and note that
$\PV \vdash y \leq s(\bar{x}) \to t(y, \bar{x}) \leq r(\bar{x})$, as $t(y, \bar{x})$ is $\PV$-monotone and $\PV \vdash r(\bar{x}) \geq 1$.
We claim that $I(u, y, \bar{x})$ and $r(\bar{x})$ work.
The first and the third claims in the statement of the lemma are the trivial consequences of the fact that $(H(u, y, \bar{x}), t(y, \bar{x}))$ is a $k$-flow from $A(\bar{x}, y)$ to $B(\bar{x}, y)$. For the second, notice that as $\PV \vdash y \leq s(\bar{x}) \to t(y, \bar{x}) \leq r(\bar{x})$, we can use the definition of $I(y, \bar{x})$ to see that the formula $I(r(\bar{x}), \bar{x})$ is $\PV$-equivalent to $B(y, \bar{x})$.  \\
For the polynomial case, if $(H(u, y, \bar{x}), t(y, \bar{x}))$ is a polynomial $k$-flow from $A(\bar{x}, y)$ to $B(\bar{x}, y)$, then there is a polynomial $q_t$ such that $t(y, \bar{x})=q_t(|y|, |\bar{x}|)$. Therefore, $r(\bar{x})=q_t(q_s(|x|)+1, |\bar{x}|)$ which implies that $r(\bar{x})$ is in the form $q_r(|\bar{x}|)$, for some poynomial $q_r$.
\end{proof}

\begin{lemma}\label{kGlu}
Let $\Gamma(\bar{x}) \cup \{A(\bar{x}), B(\bar{x}), C(\bar{x}), D(y, \bar{x})\} \subseteq \hat{\Pi}^b_k$. Then:
\begin{description}
\item[$(i)$](weak gluing)
If $A(\bar{x}) \rhd_k B(\bar{x})$ and $ B(\bar{x}) \rhd_k C(\bar{x})$ then $A(\bar{x}) \rhd_k C(\bar{x})$. A similar claim also holds for $\rhd^p_k$.
\item[$(ii)$](polynomial strong gluing)
If $\Gamma(\bar{x}), D(\lfloor\frac{y}{2}\rfloor, \bar{x}) \rhd^p_k D(y, \bar{x})$, then we have $\Gamma(\bar{x}), D(0, \bar{x}) \rhd^p_k  D(s(\bar{x}), \bar{x})$, for any $\mathcal{L}_{\PV}$-term $s(\bar{x})$.
\item[$(iii)$](strong gluing)
If $ \Gamma(\bar{x}), D(y, \bar{x}) \rhd_k D(y+1, \bar{x})$, then $\Gamma(\bar{x}), D(0, \bar{x}) \rhd_k  D(s(\bar{x}), \bar{x})$, for any $\mathcal{L}_{\PV}$-term $s(\bar{x})$.
\end{description}
\end{lemma}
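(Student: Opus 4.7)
The plan is to adapt the gluing constructions from Lemma 4.5 (for ordinal flows) to the bounded setting, with the crucial aid of Lemma 4.9 (Bounded variables) to strip the dependence on $y$ from the sub-flow lengths in parts (ii) and (iii).

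For part (i), given $k$-flows $(H(u, \bar{x}), t(\bar{x}))$ from $A$ to $B$ and $(H'(u, \bar{x}), t'(\bar{x}))$ from $B$ to $C$, I would concatenate them via
\[
H''(u, \bar{x}) \;:=\; (u \leq t(\bar{x}) \to H(u, \bar{x})) \wedge (t(\bar{x}) \leq u \to H'(u \dotminus t(\bar{x}), \bar{x}))
\]
and take the length $t(\bar{x}) + t'(\bar{x})$. Quantifier-freeness of the case conditions keeps $H''$ in $\hat{\Pi}^b_k$. The boundary equivalences at $u = 0$ and $u = t+t'$ are immediate, and the step property holds intra-slice by assumption and at $u = t(\bar{x})$ because $\PV \vdash H(t, \bar{x}) \leftrightarrow B(\bar{x}) \leftrightarrow H'(0, \bar{x})$. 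When both given flows are polynomial, so is the concatenation.

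For part (ii), I would first apply Lemma 4.9 to the polynomial $k$-flow witnessing $\bigwedge \Gamma \wedge D(\lfloor y/2 \rfloor, \bar{x}) \rhd_k^p D(y, \bar{x})$, with the bound $s(\bar{x})$, to obtain a formula $I(u, y, \bar{x}) \in \hat{\Pi}^b_k$ and a polynomial $r(\bar{x}) = q_r(|\bar{x}|) \geq 1$ such that $\PV$ proves $I(0, y, \bar{x}) \leftrightarrow \bigwedge \Gamma \wedge D(\lfloor y/2 \rfloor, \bar{x})$, the equivalence $\forall y \leq s(\bar{x}) \, [I(r(\bar{x}), y, \bar{x}) \leftrightarrow D(y, \bar{x})]$, and the step property of $I$. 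Setting $m(\bar{x}) := |s(\bar{x})|$ and $y(i, \bar{x}) := \lfloor s(\bar{x})/2^i \rfloor$, one has $y(0, \bar{x}) = s(\bar{x})$, $y(m(\bar{x}), \bar{x}) = 0$, and $\lfloor y(i, \bar{x})/2 \rfloor = y(i+1, \bar{x})$, all $\PV$-provably. I would then concatenate $m(\bar{x})$ copies of $I$ obtained by substituting $y := y(m - 1 - j, \bar{x})$ for $j = 0, \ldots, m-1$, via
\[
J(u, \bar{x}) \;:=\; \bigl(u < rm \to \bigwedge \Gamma \wedge I(u - r\lfloor u/r \rfloor,\, y(m - 1 - \lfloor u/r \rfloor, \bar{x}), \bar{x})\bigr) \wedge \bigl(u \geq rm \to D(s(\bar{x}), \bar{x})\bigr),
\]
of length $r(\bar{x}) m(\bar{x})$, writing $r = r(\bar{x})$ and $m = m(\bar{x})$. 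Quantifier-freeness of the case conditions keeps $J$ in $\hat{\Pi}^b_k$. The boundary and step conditions check as in Lemma 4.5(ii): within a slice $jr \leq u < (j+1)r$ the step follows from the step property of $I$, while at a slice transition one uses $I(r, y(m-1-j, \bar{x}), \bar{x}) \leftrightarrow D(y(m-1-j, \bar{x}), \bar{x})$ together with $I(0, y(m-2-j, \bar{x}), \bar{x}) \leftrightarrow \bigwedge \Gamma \wedge D(\lfloor y(m-2-j, \bar{x})/2 \rfloor, \bar{x}) = \bigwedge \Gamma \wedge D(y(m-1-j, \bar{x}), \bar{x})$. The total length $rm$ is bounded by $q_r(|\bar{x}|) \cdot q_s(|\bar{x}|)$, where $\PV \vdash |s(\bar{x})| \leq q_s(|\bar{x}|)$, so padding via Lemma 4.7 yields a polynomial $k$-flow.

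Part (iii) follows the same template, but iterates the flow $s(\bar{x})$ times instead of $|s(\bar{x})|$ times, using the sequence $0, 1, \ldots, s(\bar{x})$: apply Lemma 4.9 to $\bigwedge \Gamma \wedge D(y, \bar{x}) \rhd_k D(y+1, \bar{x})$ with bound $s(\bar{x})$ to get $I(u, y, \bar{x})$ and term $r(\bar{x})$, and then define
\[
J(u, \bar{x}) \;:=\; \bigl(u < rs \to \bigwedge \Gamma \wedge I(u - r\lfloor u/r \rfloor,\, \lfloor u/r \rfloor, \bar{x})\bigr) \wedge \bigl(u \geq rs \to D(s(\bar{x}), \bar{x})\bigr)
\]
of length $r(\bar{x}) s(\bar{x})$, a general $\mathcal{L}_{\PV}$-term. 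The main obstacle is part (ii): one must index the halving sequence $y(i, \bar{x})$ correctly so that the right-hand side of each sub-flow matches (up to $\bigwedge \Gamma$) the left-hand side of the next, while simultaneously preserving both the polynomial bound on the total length and $\hat{\Pi}^b_k$-membership of $J$. The $y$-independent length provided by Lemma 4.9 is essential here: without it, different sub-flows could carry lengths that vary with $y$, destroying the polynomial bound after $|s(\bar{x})|$-many compositions.
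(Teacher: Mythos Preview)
Your proposal is correct and follows essentially the same approach as the paper: part (i) by concatenation, and parts (ii)–(iii) by first applying the Bounded Variables lemma to obtain a sub-flow whose length is independent of $y$, then gluing $|s(\bar{x})|$ (resp.\ $s(\bar{x})$) shifted copies indexed by the halving (resp.\ successor) sequence. The only differences are cosmetic bookkeeping: the paper offsets the concatenation in (i) by one extra step, applies the Bounded Variables lemma to $E(y,\bar{x}) := \bigwedge\Gamma \wedge D(y,\bar{x})$ rather than carrying $\bigwedge\Gamma$ externally, and uses the bound $2s(\bar{x})$ (rather than $s(\bar{x})$) together with an indexing function $Y(z,\bar{x})$ defined as $|s(\bar{x})|+1\dotminus z$ iterations of halving on $2s(\bar{x})$, which makes the endpoint arithmetic $Y(0)=0$, $Y(|s|)=s$ slightly cleaner than your $y(i,\bar{x})=\lfloor s(\bar{x})/2^i\rfloor$ but is otherwise the same idea.
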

\begin{proof}
For $(i)$, as $A(\bar{x}) \rhd_k B(\bar{x})$ and $ B(\bar{x}) \rhd_k C(\bar{x})$, there exist $k$-flows $(H(u, \bar{x}), t(\bar{x}))$ and $(H'(u, \bar{x}), t'(\bar{x}))$, from $A(\bar{x})$ to $B(\bar{x})$ and from $B(\bar{x})$ to $C(\bar{x})$, respectively. Set $t''(\bar{x})=t(\bar{x})+t'(\bar{x})+1$ and 
\[
H''(u, \bar{x})=
\begin{cases}
H(u, \bar{x}) &  u \leq t(\bar{x})\\
H'(u \dotminus (t(\bar{x}) + 1), \bar{x}) & u > t(\bar{x})
\end{cases} 
\]
Notice that $H''(u, \bar{x})$ is clearly a $\hat{\Pi}^b_k$-formula. We claim that $(H''(u, \bar{x}), t''(\bar{x}))$ is a $k$-flow from $A(\bar{x})$ to $C(\bar{x})$ as depicted in the following figure, (for simplicity, in the figure, we sometimes drop the free variables $\bar{x}$):
\[\small
\begin{tikzcd}
	{A(\bar{x})} && {B(\bar{x})} & {B(\bar{x})} && {C(\bar{x})} \\
	{H(0)} & \cdots & {H(t)} \\
	&&& {H'(0)} & \cdots & {H'(t')} \\
	{H''(0)} & \cdots & {H''(t)} & {H''(t+1)} & \cdots & {H''(t+t'+1)}
	\arrow[from=1-1, to=1-3]
	\arrow[from=2-2, to=2-3]
	\arrow[from=3-5, to=3-6]
	\arrow[from=1-4, to=1-6]
	\arrow[from=1-3, to=1-4]
	\arrow["\equiv"{marking}, draw=none, from=1-1, to=2-1]
	\arrow["\equiv"{marking}, draw=none, from=1-3, to=2-3]
	\arrow["\equiv"{marking}, draw=none, from=1-4, to=3-4]
	\arrow["\equiv"{marking}, draw=none, from=1-6, to=3-6]
	\arrow[from=4-2, to=4-3]
	\arrow[from=4-3, to=4-4]
	\arrow[from=4-5, to=4-6]
	\arrow[from=2-1, to=2-2]
	\arrow[from=4-1, to=4-2]
	\arrow[from=3-4, to=3-5]
	\arrow[from=4-4, to=4-5]
\end{tikzcd}\]
First, it is trivial that $H''(0, \bar{x})$ is $\PV$-equivalent to $H(0, \bar{x})$ which is $\PV$-equivalent to $A(\bar{x})$. Similarly, $H''(t''(\bar{x}), \bar{x})$ is $\PV$-equivalent to $H'(t'(\bar{x}), \bar{x})$ which is $\PV$-equivalent to $C(\bar{x})$. To prove $\PV \vdash \forall u < t''(\bar{x}) \; [H''(u, \bar{x}) \to H''(u+1, \bar{x})]$, the cases $u < t(\bar{x})$ and $t(\bar{x}) < u < t''(\bar{x})$ are reduced to a similar claim for $H$ and $H'$. For $u=t(\bar{x})$, note that $H''(t(\bar{x}), \bar{x})$ is $\PV$-equivalent to $H(t(\bar{x}), \bar{x})$ and $H''(t(\bar{x})+1, \bar{x})$ is $\PV$-equivalent to $H'(0, \bar{x})$. As both formulas are $\PV$-equivalent to $B(\bar{x})$, the proof is complete. Finally, note that if the $k$-flows $(H(u, \bar{x}), t(\bar{x}))$ and $(H'(u, \bar{x}), t'(\bar{x}))$ are polynomial, there are polynomials $q$ and $q'$ such that $t(\bar{x})=q(|\bar{x}|)$ and $t'(\bar{x})=q'(|\bar{x}|)$. Hence, $t''(\bar{x})=q(|\bar{x}|)+q'(|\bar{x}|)+1$. Therefore, the $k$-flow $(H''(u, \bar{x}), t''(\bar{x}))$ is also polynomial.

For $(ii)$,
as $\Gamma(\bar{x}), D(\lfloor \frac{y}{2} \rfloor, \bar{x}) \rhd^p_k D(y, \bar{x})$, by Lemma \ref{kImplicationToFlow}, we have 
$\bigwedge \Gamma(\bar{x}) \wedge D(\lfloor \frac{y}{2} \rfloor, \bar{x}) \rhd^p_k \bigwedge \Gamma \wedge D(y, \bar{x})$. For simplicity, denote $\bigwedge \Gamma(\bar{x}) \wedge D(y, \bar{x})$ by $E(y, \bar{x})$. Therefore, we have $E(\lfloor \frac{y}{2} \rfloor, \bar{x}) \rhd^p_k E(y, \bar{x})$. First, we want to prove $E(0, \bar{x}) \rhd^p_k E(s(\bar{x}), \bar{x})$. Roughly speaking, the idea is gluing the polynomial $k$-flows from $E(\lfloor \frac{y}{2} \rfloor, \bar{x})$ to $E(y, \bar{x})$, one after another, starting from $y=s(\bar{x})$ till reaching $E(0, \bar{x})$:
\[\begin{tikzcd}
	{E(0, \bar{x})} & \cdots & {E(\lfloor \frac{\lfloor \frac{s(\bar{x})}{2} \rfloor}{2} \rfloor, \bar{x})} & {E(\lfloor \frac{s(\bar{x})}{2} \rfloor, \bar{x})} & {E(s(\bar{x}), \bar{x})}
	\arrow[from=1-4, to=1-5]
	\arrow[from=1-3, to=1-4]
	\arrow[from=1-2, to=1-3]
	\arrow[from=1-1, to=1-2]
\end{tikzcd}\]
Notice that the result of this gluing extends the length of the $k$-flow by $|s(\bar{x})|$ which is bounded by a polynomial and hence acceptable. More formally, using Lemma \ref{Bounded} for the formulas $E(\lfloor \frac{y}{2} \rfloor, \bar{x})$ and $E(y, \bar{x})$ and the term $2s(\bar{x})$ (the choice of $2s(\bar{x})$ instead of $s(\bar{x})$ is rather technical) and using the fact that $E(\lfloor \frac{y}{2} \rfloor, \bar{x}) \rhd^p_k E(y, \bar{x})$, we reach a pair $(H'(u, y, \bar{x}), t'(\bar{x}))$ such that:
\begin{description}
\item[$(1)$]
$\PV \vdash H'(0, y, \bar{x}) \leftrightarrow E(\lfloor \frac{y}{2} \rfloor,\bar{x})$,
\item[$(2)$]
$\PV \vdash \forall y \leq 2s(\bar{x}) [H'(t'(\bar{x}), y, \bar{x}) \leftrightarrow E(y, \bar{x})]$,
\item[$(3)$]
$\PV \vdash H'(u, y, \bar{x}) \rightarrow H'(u+1, y, \bar{x})$,
\item[$(4)$]
$\PV \vdash t'(\bar{x}) \geq 1$,
\end{description}
and $t'(\bar{x})=q_{t'}(|\bar{x}|)$, for some polynomial $q_{t'}$. Define the function $Y(z, \bar{x})$ as the result of $|s(\bar{x})|+1\dotminus z$ many iterations of the operation $n \mapsto \lfloor\frac{n}{2}\rfloor$ on $2s(\bar{x})$. Note that the function is clearly polynomial time computable. Therefore, we can define it recursively in $\PV$ and represent it by an $\mathcal{L}_{\PV}$-term. This term is $\PV$-provably bounded by $2s(\bar{x})$, i.e., $\PV \vdash Y(z, \bar{x}) \leq 2s(\bar{x})$ and we have $Y(0, \bar{x})=0$, $Y(|s(\bar{x})|, \bar{x})=s(\bar{x})$ and if $z \leq |s(\bar{x})|$, then $Y(z, \bar{x})=\lfloor \frac{Y(z+1, \bar{x})}{2} \rfloor$, all provable in $\PV$. Now, define
\[
I(u, \bar{x})=H'(u \dotminus t'(\bar{x})\lfloor \frac{u}{t'(\bar{x})} \rfloor, Y(\lfloor \frac{u}{t'(\bar{x})} \rfloor+1), \bar{x}).
\]
Note that $I(u, \bar{x})$ is well-defined as $t'(\bar{x})$ is greater than zero, provably in $\PV$. It is trivial that $I(u, \bar{x}) \in \hat{\Pi}^b_k$. Set $r(\bar{x})=t'(\bar{x})|s(\bar{x})|$. We claim that the pair $(I(u, \bar{x}), r(\bar{x}))$ is a $k$-flow from $E(0, \bar{x})$ to $E(s(\bar{x}),\bar{x})$ as depicted in the following figure. For simplicity, we drop the free variables $\bar{x}$ in the figure.
\[ \small
\begin{tikzcd}
	\cdots & {E(\lfloor \frac{s}{2} \rfloor)} &&& {E(s)} \\
	& {H'(t',\lfloor \frac{s}{2} \rfloor) \equiv H'(0,s)} & {H'(1, s)} & \cdots & {H'(t',s)} \\
	\cdots & {I(|\lfloor \frac{s}{2} \rfloor|t')} & {I(|\lfloor \frac{s}{2} \rfloor|t'+1)} & \cdots & {I(|s|t')}
	\arrow["\equiv"{marking}, draw=none, from=1-5, to=2-5]
	\arrow[from=2-4, to=2-5]
	\arrow["\equiv"{marking}, draw=none, from=2-5, to=3-5]
	\arrow[from=1-2, to=1-5]
	\arrow[from=3-1, to=3-2]
	\arrow["\equiv"{marking}, draw=none, from=2-2, to=3-2]
	\arrow[from=3-4, to=3-5]
	\arrow[from=2-2, to=2-3]
	\arrow[from=2-3, to=2-4]
	\arrow[from=3-2, to=3-3]
	\arrow[from=3-3, to=3-4]
	\arrow[from=1-1, to=1-2]
	\arrow["\equiv"{marking}, draw=none, from=1-2, to=2-2]
	\arrow["\equiv"{marking}, draw=none, from=2-3, to=3-3]
\end{tikzcd}\]
To prove, we first claim that 
\[
\PV \vdash \forall z \leq |s(\bar{x})| \, [I(t'(\bar{x})z, \bar{x}) \leftrightarrow E(Y(z, \bar{x}), \bar{x})] \quad \quad 
(*)
\]
The reason is that by definition, $I(t'(\bar{x})z, \bar{x})=H'(0,Y(z+1, \bar{x}), \bar{x})$ and the latter is $\PV$-equivalent to 
$E(\lfloor \frac{Y(z+1, \bar{x})}{2}\rfloor, \bar{x})$, by the property $(1)$ above.
Finally, since for any $z \leq |s(\bar{x})|$, we have $Y(z, \bar{x})=\lfloor \frac{Y(z+1, \bar{x})}{2} \rfloor$ provably in $\PV$, we reach the $\PV$-equivalence with $E(Y(z, \bar{x}), \bar{x})$.\\
Now, we prove that $(I(u, \bar{x}), r(\bar{x}))$ is a $k$-flow from $E(0, \bar{x})$ to $E(s(\bar{x}),\bar{x})$.
First, note that $I(0, \bar{x})$ is $\PV$-equivalent to $E(0, \bar{x})$, by substituting $z=0$ in $(*)$ and using the $\PV$-provable fact that $Y(0, \bar{x})=0$. Secondly, note that $I(r(\bar{x}), \bar{x})$ is $\PV$-equivalent to $E(s(\bar{x}), \bar{x})$, by substituting $z=|s(\bar{x})|$ in $(*)$ and using the $\PV$-provable fact that $Y(|s(\bar{x})|, \bar{x})=s(\bar{x})$.
Thirdly, to prove 
$\PV \vdash \forall u < t'(\bar{x}) \; [I(u, \bar{x}) \rightarrow I(u+1, \bar{x})]$,
there are two cases to consider: Either $u+1$ divides $t'(\bar{x})$ or not. In the latter case, we have $\lfloor \frac{u+1}{t'(\bar{x})} \rfloor=\lfloor \frac{u}{t'(\bar{x})} \rfloor$. By definition
$I(u, \bar{x})$ is $H'(u \dotminus t'(\bar{x})\lfloor \frac{u}{t'(\bar{x})} \rfloor, Y(\lfloor \frac{u}{t'(\bar{x})} \rfloor+1), \bar{x})$ while $I(u+1, \bar{x})$ is $H'(u+1 \dotminus t'(\bar{x})\lfloor \frac{u+1}{t'(\bar{x})} \rfloor, Y(\lfloor \frac{u+1}{t'(\bar{x})} \rfloor+1), \bar{x})$. Therefore, the former proves the latter by property $(3)$ above. 
For the first case, if
$t'(\bar{x}) | u+1$, then there exists $z \leq |s(\bar{x})|$ such that $u+1=t'(\bar{x})z$. Therefore, $I(u+1, \bar{x})$ is $I(t'(\bar{x})z, \bar{x})$ which is $\PV$-equivalent to $E(Y(z, \bar{x}), \bar{x})$ by $(*)$, and hence $\PV$-equivalent to 
$H'(t'(\bar{x}), Y(z, \bar{x}), \bar{x})$ by $(2)$, as $Y(z, \bar{x})$ is $\PV$-provably bounded by $2s(\bar{x})$. As $I(u, \bar{x})$ is  $H'(t'(\bar{x}) \dotminus 1, Y(z, \bar{x}), \bar{x})$ by definition, by $(3)$, the formula $I(u, \bar{x})$ implies $I(u+1, \bar{x})$ in $\PV$.\\
So far, we showed that
$(I(u, \bar{x}), r(\bar{x}))$ is a $k$-flow from $E(0, \bar{x})$ to $E(s(\bar{x}),\bar{x})$. Again, 
recall that for the term $s(\bar{x})$, there is a polynomial $q_s$ such that $\PV \vdash |s(\bar{x})| \leq q_s(|\bar{x}|)$ \cite{BussThesis,JanBook}. 
Hence, $\PV \vdash r(\bar{x}) \leq q_s(|\bar{x}|)q_{t'}(|\bar{x}|)$. Therefore, using Lemma \ref{Padding}, we can prove the existence of a $k$-flow with the length $q_s(|\bar{x}|)q_{t'}(|\bar{x}|)$ from 
$E(0, \bar{x})$ to $E(s(\bar{x}), \bar{x})$
which implies $E(0, \bar{x}) \rhd^p_k E(s(\bar{x}), \bar{x})$. Now, to complete the proof of $(ii)$, by the definition of $E(y, \bar{x})$, we have $\bigwedge \Gamma(\bar{x}) \wedge D(0, \bar{x}) \rhd^p_k \bigwedge \Gamma(\bar{x}) \wedge D(s(\bar{x}), \bar{x})$. As $\PV \vdash \bigwedge \Gamma(\bar{x}) \wedge D(s(\bar{x}), \bar{x}) \to D(s(\bar{x}), \bar{x}) $, by Lemma \ref{kImplicationToFlow}, we have $ \bigwedge \Gamma(\bar{x}) \wedge D(s(\bar{x}), \bar{x}) \rhd^p_k D(s(\bar{x}), \bar{x}) $. Hence, by the weak gluing, the part $(i)$ in the present lemma, we reach $\bigwedge \Gamma(\bar{x}) \wedge D(0, \bar{x}) \rhd^p_k D(s(\bar{x}), \bar{x})$.  \\
The proof of $(iii)$ is similar to that of $(ii)$ and even easier. In this case, one must again define $E(y, \bar{x})$ as $\bigwedge \Gamma(\bar{x}) \wedge D(y, \bar{x})$ and then glue the $k$-flows from $E(y, \bar{x})$ to $E(y+1, \bar{x})$, one after another, for all $0 \leq y < s(\bar{x})$.
\end{proof}
\begin{lemma}\label{kConAndDis}(Conjunction and Disjunction Rules)
Let $\Gamma \cup \Delta \cup \{A, B\} \subseteq \hat{\Pi}^b_k$. Then:
\begin{description}
\item[$(i)$]
If $\Gamma, A \rhd_k \Delta$ or $\Gamma, B \rhd_k \Delta$ then $\Gamma, A \wedge B \rhd_k \Delta$.
\item[$(ii)$]
If $\Gamma \rhd_k A, \Delta$ and $\Gamma \rhd_k B, \Delta$ then $\Gamma \rhd_k A \wedge B, \Delta$.
\item[$(iii)$]
If $\Gamma \rhd_k A, \Delta$ or $\Gamma \rhd_k B, \Delta$ then $\Gamma \rhd_k A \vee B, \Delta$.
\item[$(iv)$]
If $\Gamma, A \rhd_k \Delta$ and $\Gamma, B \rhd_k \Delta$ then $\Gamma, A \vee B \rhd_k \Delta$.
\end{description}
A similar claim also holds for $\rhd^p_k$.
\end{lemma}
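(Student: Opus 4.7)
The plan is to transcribe the proof of Lemma \ref{ConjAndDisj} into the $k$-flow setting, using Lemma \ref{kImplicationToFlow} in place of Lemma \ref{ImplicationToFlow} and the weak gluing part $(i)$ of Lemma \ref{kGlu} in place of the gluing from Lemma \ref{Gluing}. Since those analogues are already in hand, no genuinely new construction is needed: the role of the ordinal indexing disappears entirely, and everything reduces to checking that the boolean combinations remain in $\hat{\Pi}^b_k$ and that the composition of finitely many $k$-flows is a $k$-flow.

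For $(i)$ and $(iii)$, I would observe that each of the implications
\[
\bigwedge \Gamma \wedge (A \wedge B) \to \bigwedge \Gamma \wedge A, \qquad \bigwedge \Gamma \wedge (A \wedge B) \to \bigwedge \Gamma \wedge B,
\]
\[
\bigvee \Delta \vee A \to \bigvee \Delta \vee (A \vee B), \qquad \bigvee \Delta \vee B \to \bigvee \Delta \vee (A \vee B)
\]
is provable in $\PV$, so Lemma \ref{kImplicationToFlow}$(i)$ converts it into a (polynomial) $k$-flow, and then the weak gluing of Lemma \ref{kGlu}$(i)$ composes it with the given $k$-flow to yield the desired one. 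Part $(iv)$ is handled by essentially the same strategy as part $(ii)$, so I focus on $(ii)$.

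For $(ii)$, the plan is to build the target $k$-flow in three composed stages. Starting from the two hypotheses $\bigwedge \Gamma \rhd_k \bigvee \Delta \vee A$ and $\bigwedge \Gamma \rhd_k \bigvee \Delta \vee B$, Lemma \ref{kImplicationToFlow}$(ii)$ (conjoining with a context) produces
\[
\bigwedge \Gamma \rhd_k (\bigvee \Delta \vee A) \wedge \bigwedge \Gamma \quad \text{and} \quad \bigwedge \Gamma \wedge (\bigvee \Delta \vee A) \rhd_k (\bigvee \Delta \vee B) \wedge (\bigvee \Delta \vee A).
\]
Weak gluing via Lemma \ref{kGlu}$(i)$ then joins these into $\bigwedge \Gamma \rhd_k (\bigvee \Delta \vee A) \wedge (\bigvee \Delta \vee B)$. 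Finally, since $(\bigvee \Delta \vee A) \wedge (\bigvee \Delta \vee B) \to \bigvee \Delta \vee (A \wedge B)$ is $\PV$-provable, another application of Lemma \ref{kImplicationToFlow}$(i)$ followed by weak gluing gives $\bigwedge \Gamma \rhd_k \bigvee \Delta \vee (A \wedge B)$, as desired. Since each building block stays in $\hat{\Pi}^b_k$ under conjunctions and disjunctions, the resulting $H$-formula is still in $\hat{\Pi}^b_k$.

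For the polynomial version, exactly the same chain of operations works: Lemma \ref{kImplicationToFlow} already delivers polynomial $k$-flows from $\PV$-provable implications, and Lemma \ref{kGlu}$(i)$ explicitly preserves the polynomial bound (the lengths simply add and increase by a constant). I do not expect any real obstacle here; the only point requiring a little care is bookkeeping the associativity/commutativity of $\wedge$ and $\vee$ when invoking Lemma \ref{kImplicationToFlow}$(ii)$, since the lemma is stated for a single context formula, but this is absorbed by applying Lemma \ref{kImplicationToFlow}$(i)$ to the corresponding $\PV$-provable reshuffling implications before each gluing step.
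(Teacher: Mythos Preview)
Your proposal is correct and follows exactly the paper's approach: the paper's proof is the single sentence ``The argument is identical to that of Lemma \ref{ConjAndDisj},'' and you have faithfully transcribed that argument into the $k$-flow setting, invoking Lemma \ref{kImplicationToFlow} and Lemma \ref{kGlu}$(i)$ in place of their ordinal-flow counterparts.
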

\begin{proof}
The argument is identical to that of Lemma \ref{ConjAndDisj} claiming the same fact for the ordinal flows.
\end{proof}

\begin{lemma}\label{Neg}(Negation Rules) If $\Gamma \cup \Delta \subseteq \hat{\Pi}^b_k$ and $A, \neg A \in \hat{\Pi}^b_k$, then:
\begin{description}
\item[$(i)$]
If $\Gamma, A \rhd_k \Delta$ then $\Gamma \rhd_k \neg A, \Delta$.
\item[$(ii)$]
If $\Gamma \rhd_k A, \Delta$ then $\Gamma, \neg A \rhd_k \Delta$.
\end{description}
A similar claim also holds for $\rhd^p_k$.
\end{lemma}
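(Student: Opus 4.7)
The plan is to derive both parts directly from the calculus of $k$-flows already established, exploiting the hypothesis that both $A$ and $\neg A$ lie in $\hat{\Pi}^b_k$ so that everything stays inside the allowed formula class.

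For part $(i)$, assume $\Gamma, A \rhd_k \Delta$, i.e.\ $\bigwedge \Gamma \wedge A \rhd_k \bigvee \Delta$. First, applying Lemma \ref{kImplicationToFlow}(ii) with $C = \neg A$ (which is in $\hat{\Pi}^b_k$ by hypothesis) and $\circ = \vee$, I obtain
\[
(\bigwedge \Gamma \wedge A) \vee \neg A \;\rhd_k\; \bigvee \Delta \vee \neg A .
\]
Second, the propositional tautology $\bigwedge \Gamma \to (\bigwedge \Gamma \wedge A) \vee \neg A$ is trivially $\PV$-provable, so by Lemma \ref{kImplicationToFlow}(i) I get $\bigwedge \Gamma \rhd^p_k (\bigwedge \Gamma \wedge A) \vee \neg A$. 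Weak gluing (Lemma \ref{kGlu}(i)) then yields $\bigwedge \Gamma \rhd_k \neg A \vee \bigvee \Delta$, which is exactly $\Gamma \rhd_k \neg A, \Delta$.

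For part $(ii)$, the argument is dual. Assume $\bigwedge \Gamma \rhd_k A \vee \bigvee \Delta$. By Lemma \ref{kImplicationToFlow}(ii) with $C = \neg A$ and $\circ = \wedge$, I obtain
\[
\bigwedge \Gamma \wedge \neg A \;\rhd_k\; (A \vee \bigvee \Delta) \wedge \neg A .
\]
Then $\PV \vdash (A \vee \bigvee \Delta) \wedge \neg A \to \bigvee \Delta$ is a propositional tautology, so Lemma \ref{kImplicationToFlow}(i) gives $(A \vee \bigvee \Delta) \wedge \neg A \rhd^p_k \bigvee \Delta$, and weak gluing produces $\bigwedge \Gamma \wedge \neg A \rhd_k \bigvee \Delta$, i.e.\ $\Gamma, \neg A \rhd_k \Delta$.

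The polynomial version goes through with no change: in both parts the extra $k$-flow introduced via Lemma \ref{kImplicationToFlow}(i) is already polynomial, while Lemma \ref{kImplicationToFlow}(ii) preserves the polynomial property of the input $k$-flow, and weak gluing in Lemma \ref{kGlu}(i) preserves $\rhd^p_k$. Hence if the hypothesis is stated with $\rhd^p_k$, the conclusion comes out with $\rhd^p_k$ as well. There is no real obstacle here; the only point requiring attention is that every intermediate formula built by the construction ($\neg A \vee (\bigwedge \Gamma \wedge A)$, $(A \vee \bigvee \Delta) \wedge \neg A$, etc.) stays inside $\hat{\Pi}^b_k$, which is guaranteed precisely by the assumption $\neg A \in \hat{\Pi}^b_k$ combined with the closure of $\hat{\Pi}^b_k$ under conjunction and disjunction.
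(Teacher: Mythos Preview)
Your proof is correct and follows essentially the same approach as the paper: for part $(i)$ the steps are identical (disjoin with $\neg A$ via Lemma~\ref{kImplicationToFlow}(ii), prepend the $\PV$-tautology via Lemma~\ref{kImplicationToFlow}(i), then weak-glue), and for part $(ii)$ you spell out the dual argument that the paper leaves to the reader as ``similar.'' Your remarks on the polynomial case and on closure under $\hat{\Pi}^b_k$ are accurate and match what the paper relies on implicitly.
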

\begin{proof}
We only prove the claim for $\rhd_k$. The case for $\rhd_k^p$ is identical. For $(i)$, assume $\Gamma, A \rhd_k \Delta$ which means $\bigwedge \Gamma \wedge A \rhd_k \bigvee \Delta$. As $\neg A \in \hat{\Pi}^b_k$, by Lemma \ref{kImplicationToFlow}, we have $(\bigwedge \Gamma \wedge A) \vee \neg A \rhd_k \bigvee \Delta \vee \neg A$. Since 
$
\PV \vdash \bigwedge \Gamma \rightarrow (\bigwedge \Gamma \wedge A) \vee \neg A$,
by Lemma \ref{kImplicationToFlow}, we have $\bigwedge \Gamma \rhd_k (\bigwedge \Gamma \wedge A) \vee \neg A$. Hence, by
weak gluing, Lemma \ref{kGlu}, we have $\bigwedge \Gamma \rhd_k \bigvee \Delta \vee \neg A$. The proof for $(ii)$ is similar.
\end{proof}

\begin{lemma}\label{quantifier}(Bounded Universal Quantifier)
Let $A(\bar{x}), B(\bar{x}, y) \in \hat{\Pi}^b_k$ and $s(\bar{x})$ be an $\mathcal{L}_{\PV}$-term. If $A(\bar{x}), (y \leq s(\bar{x})) \rhd_k B(\bar{x}, y)$, then $A(\bar{x}) \rhd_k \forall y \leq s(\bar{x}) B(y, \bar{x})$. The same also holds for $\rhd_k^p$.
\end{lemma}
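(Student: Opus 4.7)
The plan is to apply Lemma \ref{Bounded} directly to strip the $y$-dependence out of the length of the given $k$-flow, and then to universally quantify each formula along the flow over $y \leq s(\bar{x})$. More precisely, consider the hypothesis $A(\bar{x}) \wedge (y \leq s(\bar{x})) \rhd_k B(\bar{x}, y)$; regarding the antecedent and consequent as $\hat{\Pi}^b_k$-formulas in the variables $(\bar{x}, y)$, Lemma \ref{Bounded} applied with bound $s(\bar{x})$ yields a $\hat{\Pi}^b_k$-formula $I(u, y, \bar{x})$ and a term $r(\bar{x})$ independent of $y$ with $r(\bar{x}) \geq 1$, such that $\PV$ proves $I(0, y, \bar{x}) \leftrightarrow A(\bar{x}) \wedge (y \leq s(\bar{x}))$, proves $\forall y \leq s(\bar{x})\,[I(r(\bar{x}), y, \bar{x}) \leftrightarrow B(\bar{x}, y)]$, and proves $I(u, y, \bar{x}) \to I(u+1, y, \bar{x})$.

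Now I would define $H(u, \bar{x}) := \forall y \leq s(\bar{x})\, I(u, y, \bar{x})$, which lies in $\hat{\Pi}^b_k$ because $I \in \hat{\Pi}^b_k$ and $\hat{\Pi}^b_k$ is closed under bounded universal quantification. The claim is that $(H(u, \bar{x}), r(\bar{x}))$ is a $k$-flow from $A(\bar{x})$ to $\forall y \leq s(\bar{x})\, B(\bar{x}, y)$. At $u = 0$ the formula $H(0, \bar{x})$ is $\PV$-equivalent to $\forall y \leq s(\bar{x})\,[A(\bar{x}) \wedge (y \leq s(\bar{x}))]$, which collapses to $A(\bar{x})$ in $\PV$ since $A(\bar{x})$ does not depend on $y$ and the bound $s(\bar{x}) \geq 0$ makes the range $\PV$-provably nonempty. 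At $u = r(\bar{x})$ the formula $H(r(\bar{x}), \bar{x})$ is $\PV$-equivalent to $\forall y \leq s(\bar{x})\, B(\bar{x}, y)$ by the second clause from Lemma \ref{Bounded}. The monotonicity $\PV \vdash \forall u < r(\bar{x})\,[H(u, \bar{x}) \to H(u+1, \bar{x})]$ is inherited from the analogous property of $I$ by commuting universal quantification with implication.

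For the polynomial case, the second part of Lemma \ref{Bounded} lets us take $r(\bar{x})$ of the form $q(|\bar{x}|)$, and the same construction shows $A(\bar{x}) \rhd_k^p \forall y \leq s(\bar{x})\, B(\bar{x}, y)$. I do not expect any real obstacle here: the lemma is essentially a bookkeeping exercise that was precisely anticipated by the formulation of Lemma \ref{Bounded}, whose role is exactly to uniformize the length of a $k$-flow in the bounded parameter $y$ so that a quantifier may then be placed in front of every stage.
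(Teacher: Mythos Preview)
Your proof is correct and follows essentially the same strategy as the paper: apply Lemma~\ref{Bounded} to eliminate the $y$-dependence from the length, then place a bounded universal quantifier over $y$ in front of each stage of the flow. The only cosmetic difference is that the paper first invokes Lemma~\ref{Neg} to move the hypothesis $y \leq s(\bar{x})$ to the right, obtaining $A(\bar{x}) \rhd_k (y \leq s(\bar{x}) \to B(\bar{x}, y))$, so that the $u=0$ endpoint from Lemma~\ref{Bounded} is literally $A(\bar{x})$; you instead keep $y \leq s(\bar{x})$ on the left and recover the equivalence $\forall y \leq s(\bar{x})\,[A(\bar{x}) \wedge (y \leq s(\bar{x}))] \leftrightarrow A(\bar{x})$ by the trivial nonemptiness of the bounded range, which is equally fine.
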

\begin{proof}
Again, we only prove the claim for $\rhd_k$. The proof for $\rhd^p_k$ is identical. Since $A(\bar{x}), (y \leq s(\bar{x})) \rhd_k B(\bar{x}, y)$ and $y \leq s(\bar{x})$ is quantifier-free, by Lemma \ref{Neg}, we have $A(\bar{x}) \rhd_k (y \leq s(\bar{x}) \to B(\bar{x}, y))$. Note that $(y \leq s(\bar{x}) \to B(\bar{x}, y))$ is defined as $\neg (y \leq s(\bar{x})) \vee B(\bar{x}, y)$, as the negation is not primitive in the language. Use Lemma \ref{Bounded} for the formulas $A(\bar{x})$ and $(y \leq s(\bar{x}) \to B(\bar{x}, y))$ and the term $s(\bar{x})$. Therefore, we have a formula $I(u, y, \bar{x}) \in \hat{\Pi}^b_k$ and a term $r(\bar{x})$ such that: 
\begin{description}
\item[$\bullet$]
$\PV \vdash I(0, y, \bar{x}) \leftrightarrow A(\bar{x})$.
\item[$\bullet$]
$\PV \vdash \forall y \leq s(\bar{x}) [I(r(\bar{x}), y, \bar{x}) \leftrightarrow (y \leq s(\bar{x}) \to B(\bar{x}, y))]$.
\item[$\bullet$]
$\PV \vdash I(u, y, \bar{x}) \rightarrow I(u+1, y, \bar{x})$.
\end{description}
It is easy to see that the pair $\big (\forall y \leq s(\bar{x}) I(u, y, \bar{x}), r(\bar{x})\big)$ is a $k$-flow from $A(\bar{x})$ to $\forall y \leq s(\bar{x}) B(\bar{x}, y)$. 
\end{proof}

Now we are ready to use $k$-flows to witness the provable implications between $\hat{\Pi}^b_k$-formulas in $S^k_2$ and $T^k_2$.
\begin{theorem}\label{kMain}(Soundness and Completeness)
Let $\Gamma(\bar{x}) \cup \Delta(\bar{x}) \subseteq \hat{\Pi}^b_k$. Then:
\begin{description}
\item[$(i)$]
$S^k_2 \vdash \bigwedge \Gamma(\bar{x}) \rightarrow \bigvee \Delta(\bar{x})$ iff $ \Gamma(\bar{x}) \rhd^p_k \Delta(\bar{x})$.
\item[$(ii)$]
$T^k_2 \vdash \bigwedge \Gamma(\bar{x}) \rightarrow \bigvee \Delta(\bar{x})$ iff $ \Gamma(\bar{x}) \rhd_k \Delta(\bar{x})$.
\end{description}
\end{theorem}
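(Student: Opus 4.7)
The plan is to mirror the template of Theorem 4.9 from the ordinal section, replacing transfinite induction by (polynomial) induction on $\hat{\Pi}^b_k$-formulas and the ordinal-flow lemmas by their $k$-flow counterparts (Lemmas 4.8--4.13). Note that both equivalences have an easy ``soundness'' direction (from the existence of a flow to provability) and a harder ``completeness'' direction (from provability to the construction of a flow).

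For the soundness direction, suppose $\big(H(u,\bar{x}), t(\bar{x})\big)$ is a $k$-flow from $\bigwedge\Gamma(\bar{x})$ to $\bigvee\Delta(\bar{x})$. Since $H \in \hat{\Pi}^b_k$, the single-step implication $\forall u < t(\bar{x})\,[H(u,\bar{x})\to H(u+1,\bar{x})]$ together with $H(0,\bar{x})\leftrightarrow\bigwedge\Gamma(\bar{x})$ yields $H(t(\bar{x}),\bar{x})$ by an application of $\hat{\Pi}^b_k$-$\Ind$, which $T^k_2$ has by definition; then $\PV$-equivalence with $\bigvee\Delta(\bar{x})$ finishes. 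In the $S^k_2$ case the flow is polynomial, $t(\bar{x})=q(|\bar{x}|)$, and one can invoke $\hat{\Pi}^b_k$-$\LInd$ instead (provable in $S^k_2$) after choosing a $\PV$-term $r(\bar{x})$ such as $2^{q(|\bar{x}|)}$ with $|r(\bar{x})|\geq q(|\bar{x}|)$, which brings the length of the chain within the reach of $\LInd$.

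For the completeness direction, apply Lemma 4.6 to extract a $\mathbf{wLS^k_2}$-proof (resp.\ $\mathbf{wLT^k_2}$-proof) of $\Gamma\Rightarrow\Delta$ consisting only of $\hat{\Pi}^b_k$-formulas, and induct on its structure, showing for each sequent $\Sigma\Rightarrow\Lambda$ in the proof tree that $\Sigma\rhd^p_k\Lambda$ (resp.\ $\Sigma\rhd_k\Lambda$). The axioms, being $\PV$-theorems, are handled by Lemma 4.9(i); the structural rules other than weak cut use Lemma 4.9(ii); weak cut is exactly the weak gluing of Lemma 4.11(i); the weak polynomial induction rule $\wPInd_k$ is polynomial strong gluing, Lemma 4.11(ii); the weak induction rule $\wInd_k$ is strong gluing, Lemma 4.11(iii); propositional rules are Lemma 4.12; the bounded universal right rule is Lemma 4.13; and the bounded universal left rule reduces to the $\PV$-provable implication $\forall y\leq t\,A(y)\wedge s\leq t\to A(s)$ combined with weak gluing. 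The bounded existential rules are dual: $R\exists^{\leq}$ reduces to $A(s)\wedge s\leq t\to\exists y\leq t\,A(y)$ provable in $\PV$, and $L\exists^{\leq}$ is dual to Lemma 4.13 and can be proved by an analogous argument using the negation rules (Lemma 4.14) and the fact that in any $\hat{\Pi}^b_k$-only proof the body of an existential introduction must itself lie in a class low enough to keep $\exists y\leq t\,A(y)$ inside $\hat{\Pi}^b_k$.

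I expect the main obstacle to lie in two places. First, the careful bookkeeping required in the soundness direction for $S^k_2$: one must match the polynomial length $q(|\bar{x}|)$ of the flow against the precise form of induction $S^k_2$ admits, which is a well-known but technically delicate step. Second, the $L\exists^{\leq}$ case of the completeness induction is the only place where the parallel with the ordinal-flow proof genuinely breaks down: there is no analogue in Theorem 4.9 because $\TI(\forall_1,\prec)$-proofs need not contain bounded existentials. Absorbing the existential into a $k$-flow without leaving the class $\hat{\Pi}^b_k$ requires a variant of Lemma 4.13 for $\exists$, proved by the same padding and variable-bounding techniques. All the remaining cases are either immediate from the high-level calculus we have built or trivial $\PV$-manipulations.
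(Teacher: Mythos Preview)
Your proposal is correct and follows essentially the same route as the paper: the easy direction uses $\hat{\Pi}^b_k$-$\LInd$ (for $S^k_2$) or $\hat{\Pi}^b_k$-$\Ind$ (for $T^k_2$) exactly as you describe, and the hard direction proceeds by induction on a $\mathbf{wLS^k_2}$- or $\mathbf{wLT^k_2}$-proof consisting of $\hat{\Pi}^b_k$-formulas, dispatching each rule with the corresponding $k$-flow lemma. In particular, your treatment of $L\exists^{\leq}$ via the negation lemma, the bounded-universal lemma, and the observation that $\exists y\leq s\,B\in\hat{\Pi}^b_k$ forces $B\in\hat{\Sigma}^b_{k-1}$ (so $\neg B\in\hat{\Pi}^b_k$) is exactly what the paper does.
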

\begin{proof}
We only prove $(i)$. The proof of $(ii)$ is similar. First, we prove the easier completeness part. If $\Gamma(\bar{x}) \rhd^p_{k} \Delta(\bar{x})$, then by Definition \ref{kDefFlow}, there exist a polynomial $q$, and a formula $H(u, \bar{x}) \in \hat{\Pi}^b_k$ such that:
\begin{description}
\item[$\bullet$]
$\PV \vdash H(0, \bar{x}) \leftrightarrow \bigwedge \Gamma(\bar{x})$,
\item[$\bullet$]
$\PV \vdash H(q(|\bar{x}|), \bar{x}) \leftrightarrow \bigvee \Delta(\bar{x})$,
\item[$\bullet$]
$\PV \vdash \forall u < q(|\bar{x}|) \; [H(u, \bar{x}) \rightarrow H(u+1, \bar{x})] $.
\end{description}
Using Lemma \ref{Padding}, without loss of generality, we can also assume that $\PV \vdash q(|\bar{x}|) \geq 1$. As $\PV$ is a subtheory of $S^k_2$, we also have all the above provabilities for $S^k_2$. Hence,
$
S^k_2 \vdash \forall u < q(|\bar{x}|) \; [H(u, \bar{x}) \rightarrow H(u+1, \bar{x})].
$
Since $H(u, \bar{x}) \in \hat{\Pi}^k_2$, by the $\hat{\Pi}^b_k-\LInd$ axiom, we have,
$
S^k_2 \vdash H(0, \bar{x}) \rightarrow H(|2^{q(|\bar{x}|)\dotminus 1}|, \bar{x})$. As $\PV \vdash q(|\bar{x}|) \geq 1$, we have $\PV \vdash |2^{q(|\bar{x}|)\dotminus 1}|=|q(\bar{x})|$. Hence, $S^k_2 \vdash H(0, \bar{x}) \rightarrow H(q(|\bar{x}|), \bar{x})$.
Therefore, $S^k_2 \vdash \bigwedge \Gamma(\bar{x}) \rightarrow \bigvee \Delta(\bar{x})$.\\
For soundness, assume $S^k_2 \vdash \bigwedge \Gamma(\bar{x}) \rightarrow \bigvee \Delta(\bar{x})$. By Lemma \ref{kCutConsequence}, $\Gamma(\bar{x}) \Rightarrow \Delta(\bar{x})$ has a $\mathbf{wLS^k_2}$-proof only consisting of $\hat{\Pi}^b_k$-formulas.
By induction on this proof, we show that for any sequent $\Sigma \Rightarrow \Lambda$ in the proof, we have $\Sigma \rhd^p_k \Lambda$.
For the axioms, as they are provable in $\PV$, using Lemma \ref{kImplicationToFlow}, there is nothing to prove. The case of structural rules (except for the weak cut) is easy. Weak cut and $(\wPInd_k)$ are addressed in Lemma \ref{kGlu}. The conjunction and disjunction rules are proved in Lemma \ref{kConAndDis} and the rule $(R\forall^{\leq})$ is addressed in Lemma \ref{quantifier}. Therefore, there are only three cases to consider.
If the last rule is
\begin{center}
  	\begin{tabular}{c c}
  		\AxiomC{$\Sigma(\bar{x}, y), B(\bar{x}, s(\bar{x}, y)) \Rightarrow \Lambda(\bar{x}, y) $}
  		\RightLabel{{\footnotesize $L\forall^{\leq} $}} 
  		\UnaryInfC{$\Sigma(\bar{x}, y), s(\bar{x}, y) \leq t(\bar{x}, y), \forall y \leq t(\bar{x}, y) B(\bar{x}, y) \Rightarrow \Lambda(\bar{x}, y)$}
  		\DisplayProof
	\end{tabular}
\end{center}
by the induction hypothesis, we have
$\Sigma(\bar{x}, y), B(\bar{x}, s(\bar{x}, y)) \rhd^p_k \Lambda(\bar{x}, y) $.
Since 
\[
\bigwedge \Sigma(\bar{x}, y) \wedge (s(\bar{x}, y) \leq t(\bar{x}, y)) \wedge \forall y \leq t(\bar{x}, y) B(\bar{x}, y)
\]
implies $\bigwedge \Sigma(\bar{x}, y) \wedge B(\bar{x}, s(\bar{x}, y))$ in $\PV$,
by Lemma \ref{kImplicationToFlow} and weak gluing, Lemma \ref{kGlu}, we have
\[
\Sigma(\bar{x}, y), s(\bar{x}, y) \leq t(\bar{x}, y), \forall y \leq t(\bar{x}, y) B(\bar{x}, y) \rhd^p_k \Lambda(\bar{x}, y).
\]
The case for the rule $R\exists^{\leq} $ is similar to the previous case. Finally, if the last rule is
\begin{center}
  	\begin{tabular}{c c}
  		
   	\AxiomC{$\Sigma(\bar{x}), z \leq s(\bar{x}), B(\bar{x}, z) \Rightarrow \Lambda(\bar{x})$}
  	 \RightLabel{{\footnotesize $L\exists^{\leq} $}} 
  		\UnaryInfC{$\Sigma(\bar{x}), \exists y \leq s(\bar{x}) B(\bar{x}, y) \Rightarrow\Lambda(\bar{x})$}
  		\DisplayProof

	\end{tabular}
\end{center}
by the induction hypothesis, we have $\Sigma(\bar{x}), z \leq s(\bar{x}), B(\bar{x}, z) \rhd^p_k \Lambda(\bar{x})$. Since $\exists y \leq s(\bar{x}) B(\bar{x}, y) \in \hat{\Pi}^b_k$ and it starts with an existential quantifier, it must belong to $\hat{\Sigma}^b_{k-1}$. Hence, both $\neg B(\bar{x}, z)$ and $\neg \exists y \leq s(\bar{x}) B(\bar{x}, y)=\forall y \leq s(\bar{x}) \neg B(\bar{x}, y)$ are in $\hat{\Pi}^b_k$. Therefore, by Lemma \ref{Neg},
\[
\Sigma(\bar{x}), z \leq s(\bar{x}) \rhd^p_k \Lambda(\bar{x}), \neg B(\bar{x}, z).
\]
By using the fact that the names of the parameters are not important in $k$-flows and employing Lemma \ref{quantifier}, we have
$
\Sigma(\bar{x}) \rhd^p_k \Lambda(\bar{x}), \forall y \leq s(\bar{x}) \; \neg B(\bar{x}, y)$.
Finally again by Lemma \ref{Neg}, we reach
$
\Sigma(\bar{x}), \exists y \leq s(\bar{x}) B(\bar{x}, y) \rhd^p_k \Lambda(\bar{x})$.
\end{proof}

\subsection{Reductions and $\PLS_{(k, l)}$-programs} \label{SubsectionReduction}

In Subsection \ref{SubsectionKFlows}, we transformed the $S^k_2$-provable (resp. $T^k_2$-provable) implications between $\hat{\Pi}^b_k$-formulas into exponentially (resp., polynomially) long uniform sequences of $\PV$-provable implications between $\hat{\Pi}^b_k$-formulas. Having that characterization at hand, one can use the universality of $\PV$ to employ generalized Herbrand's theorem and push the characterization of Theorem \ref{kMain} even further to witness \emph{all} essentially existential quantifiers in the $\PV$-provable implications by polynomial-time computable functions. 
Instead of following this rather \emph{absolute} approach, in this subsection, we will employ a \emph{relative} approach to witness all the essentially existential quantifiers up to a given level $l \leq k$. The idea is simple. First, by moving the $\PV$-provable implications from $\PV$ to $\PV_{k-l+1}$, we will pretend that all the $\mathcal{L}_{\PV}$-formulas in $\hat{\Pi}^b_{k-l} \cup \hat{\Sigma}^b_{k-l}$ are quantifier-free in $\mathcal{L}_{\PV_{k-l+1}}$. Therefore, only $l$ many alternating quantifiers are left to peel off for which we use the generalized Herbrand's theorem. In choosing the right value for $l$, there is a clear trade-off between the complexity of the witnessing functions on the one hand and the complexity of the witnessing more alternating quantifiers, on the other. For the smaller values of $l$, the latter would be quite easy as evidenced by Theorem \ref{Herbrand}. However, the cost to pay is the higher complexity of the witnessing functions that now live in the higher level of the polynomial hierarchy, i.e., in the class $\Box^p_{k-l+1}$. For the higher values of $l$, the situation is reverse. For instance, if $l=k$, then all the witnessing functions are polynomial time as they live in $\PV_{k-k+1}=\PV$. However, the generalized Herbrand's theorem must witness $k$ many quantifier alternations that is combinatorially too complex to deal with.
In the present subsection, we will lean towards the lower values for $l$ and will only apply the relative approach to two instances of $l=1$ and $l=2$ to avoid the high witnessing complexity. However, it is worth emphasizing that the main base, i.e., Theorem \ref{kMain} is there and one can use it for any value of $l$ by employing the right instance of Herbrand's theorem. We only cover these two cases to show that how interesting the concrete consequences can be. For $l=1$, we will show that some well-known witnessing theorems in bounded arithmetic are just special cases of our witnessing theorem. For $l=2$, the witnessing results are all new. 

\subsubsection{The game interpretation}

Let $k \geq l \geq 1$ be two numbers, $G(\bar{x}, y_1, y_2, \ldots, y_l)$ be a quantifier-free $\mathcal{L}_{\PV_k}$-formula and $t(\bar{x})$ be an $\mathcal{L}_{\PV_k}$-term. We call the pair $(G(\bar{x}, y_1, y_2, \ldots, y_l), t(\bar{x}))$ a \emph{$(k, l)$-game} (a game, for short) and we interpret it as a uniform family of $l$-turn games between two players parameterized by the variables $\bar{x}$. To emphasize this parameter role, we sometimes write $G_{\bar{x}}(y_1, y_2, \ldots, y_l)$ for $G(\bar{x}, y_1, y_2, \ldots, y_l)$ and if the variables are clear from the context, we use the shorthand $(G_{\bar{x}}, t(\bar{x}))$ for an instance of the game and $(G, t)$ for the uniform family itself. Given the value $\bar{a}$ for $\bar{x}$, the game $G_{\bar{a}}$ starts with the first player, denoted by I, playing the number $b_1 \leq t(\bar{a})$ for $y_1$. Then, the second player, denoted by II, plays $b_2 \leq t(\bar{a})$ for $y_2$ and so on. The resulting tuple $(b_1, b_2, \ldots, b_l)$ is called a \emph{play} of the game. For a play, $(b_1, b_2, \ldots, b_l)$, if $G(\bar{a}, b_1, b_2, \ldots, b_l)$ holds, the \emph{first player} wins the game and otherwise, the second player is the winner. A play $(b_1, \ldots, b_l)$ is called a \emph{winning play} for the first (second) player, if it makes the first (second) player wins. It is an easy and well-known fact that the first player has a \emph{winning strategy} in $G_{\bar{a}}$ iff $\exists y_1 \leq t(\bar{a}) \forall y_2 \leq t(\bar{a}) \exists y_3 \leq t(\bar{a}) \ldots G(\bar{a}, y_1, \ldots, y_l)$ holds. As we are always interested in the first player in this subsection, by a winning play and a winning strategy, we always mean them for the first player. Having two $(k, l)$-games $(G(\bar{x}, y_1, y_2, \ldots, y_l), t(\bar{x}))$ and $(H(\bar{x}, z_1, z_2, \ldots, z_l), s(\bar{x}))$, a natural question to ask is the following. Let the existence of a winning strategy in $(G_{\bar{x}}, t(\bar{x}))$ implies the existence of a winning strategy in $(H_{\bar{x}}, s(\bar{x}))$, for any $\bar{x}$, i.e., the implication
\[
\exists y_1 \leq t(\bar{x}) \forall y_2 \leq t(\bar{x}) \ldots G(\bar{x}, y_1, \ldots, y_l) \to 
\]
\[
\exists z_1 \leq s(\bar{x}) \forall z_2 \leq s(\bar{x}) \ldots H(\bar{x}, z_1, \ldots, z_l), \quad \quad (\dagger)
\]
hold. Then, does it mean that we can find an \emph{explicit} way to use a winning strategy for $(G_{\bar{x}}, t(\bar{x}))$ to design a winning strategy for $(H_{\bar{x}}, s(\bar{x}))$? One can even sharpen the question by asking if having a proof of the implication $(\dagger)$
in the theory $\PV_k$ helps to provide an explicit and \emph{relatively simple} transformation between the winning strategies. Fortunately, as $\PV_k$ is a universal theory, the extraction of the explicit transformation between the winning strategies is possible and it is simply the content of Herbrand's theorem, Theorem \ref{Herbrand} (up to some small modifications). We will explain the details for the two case $l=1$ and $l=2$, below.

\subsubsection{The case $l=1$}
Let $(G(\bar{x}, y), t(\bar{x}))$ and $(H(\bar{x}, z), s(\bar{x}))$ be two $(k, 1)$-games.
The most trivial way to reduce the winning strategy of the latter to that of the former is via a function $f(\bar{x}, y)$ that maps any move 
$y \leq t(\bar{x})$ in $(G, t)$ to a move $z \leq s(\bar{x})$ in $(H, s)$ such that if the play $y$ is a winning play in $(G, t)$, then the play $z=f(\bar{x}, y)$ is a winning play in $(H, s)$. Moreover, as we expect the reduction to be simple and verifiable, we expect that everything happens inside a base theory, in our case $\PV_k$. More formally:
\begin{definition}\label{1Reduction}
Let $(G(\bar{x}, y), t(\bar{x}))$ and $(H(\bar{x}, z), s(\bar{x}))$ be two $(k, 1)$-games. A \emph{$(k, 1)$-reduction} from $(H(\bar{x}, z), s(\bar{x}))$ to $(G(\bar{x}, y), t(\bar{x}))$ is an $\mathcal{L}_{\PV_k}$-term $f(\bar{x}, y)$ such that:
\begin{itemize}
    \item[$\bullet$]
$\PV_k \vdash \forall y \leq t(\bar{x}) [f(\bar{x}, y) \leq s(\bar{x})]$,
    \item[$\bullet$]
$\PV_k \vdash \forall y \leq t(\bar{x}) [G(\bar{x}, y) \to H(\bar{x}, f(\bar{x}, y))]
$. 
\end{itemize}
\end{definition}
Naturally, we expect a connection between the provability of 
\[
\exists y \leq t(\bar{x}) G(\bar{x}, y) \to \exists z \leq s(\bar{x}) H(\bar{x}, z)
\]
in $\PV_k$ and the existence of a $(k, 1)$-reduction. This is the content of the following modification of Herbrand's theorem.
\begin{theorem}\label{1ImpToRed}
For any two $(k, 1)$-games $(G(\bar{x}, y), t(\bar{x}))$ and $(H(\bar{x}, z), s(\bar{x}))$, the following are equivalent:
\begin{itemize}
    \item[$\bullet$]
$\PV_k \vdash \exists y \leq t(\bar{x}) G(\bar{x}, y) \to \exists z \leq s(\bar{x}) H(\bar{x}, z)$
    \item[$\bullet$]
There is a $(k, 1)$-reduction from  $(H(\bar{x}, z), s(\bar{x}))$ to $(G(\bar{x}, y), t(\bar{x}))$.
\end{itemize}
\end{theorem}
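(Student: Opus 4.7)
The plan is to prove the two directions separately, with the backward direction being a routine unfolding of the definitions and the forward direction relying on the first bullet of Herbrand's theorem (Theorem \ref{Herbrand}).

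For the direction from the existence of a $(k,1)$-reduction $f(\bar{x},y)$ to the provability claim, I would simply reason inside $\PV_k$: assuming $\exists y \leq t(\bar{x})\, G(\bar{x},y)$, pick such a $y$. Then the first defining clause of a $(k,1)$-reduction gives $f(\bar{x},y) \leq s(\bar{x})$, and the second gives $H(\bar{x},f(\bar{x},y))$, so $f(\bar{x},y)$ witnesses $\exists z \leq s(\bar{x})\, H(\bar{x},z)$.

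For the forward direction, the key step is to rewrite the implication so that Herbrand's theorem can be applied. Observe that the formula $\exists y \leq t(\bar{x})\, G(\bar{x},y) \to \exists z \leq s(\bar{x})\, H(\bar{x},z)$ is logically equivalent to $\forall y \, \exists z \, [(y \leq t(\bar{x}) \wedge G(\bar{x},y)) \to (z \leq s(\bar{x}) \wedge H(\bar{x},z))]$. Treating $y$ as a free variable and reading off the $\exists z$ part, Herbrand's theorem (first bullet of Theorem \ref{Herbrand}) applied to this quantifier-free matrix yields an $\mathcal{L}_{\PV_k}$-term $f(\bar{x},y)$ such that $\PV_k$ proves $(y \leq t(\bar{x}) \wedge G(\bar{x},y)) \to (f(\bar{x},y) \leq s(\bar{x}) \wedge H(\bar{x},f(\bar{x},y)))$.

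The only subtlety, and the point I would flag as the main (albeit minor) technical obstacle, is that the extracted term $f$ is only guaranteed to produce values bounded by $s(\bar{x})$ on those inputs that already satisfy $y \leq t(\bar{x}) \wedge G(\bar{x},y)$, whereas Definition \ref{1Reduction} demands the bound for every $y \leq t(\bar{x})$. To repair this, I would pass to the modified term $f'(\bar{x},y) := \min(f(\bar{x},y),\, s(\bar{x}))$, which is still an $\mathcal{L}_{\PV_k}$-term since $\PV_k$ contains $\PV$ and in particular a ptime minimum. Unconditionally, $\PV_k \vdash f'(\bar{x},y) \leq s(\bar{x})$; and whenever $y \leq t(\bar{x}) \wedge G(\bar{x},y)$ holds, the inequality $f(\bar{x},y) \leq s(\bar{x})$ gives $f'(\bar{x},y) = f(\bar{x},y)$, so $H(\bar{x},f'(\bar{x},y))$ follows from what Herbrand delivered. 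Both clauses of Definition \ref{1Reduction} are then verified for $f'$, completing the extraction.
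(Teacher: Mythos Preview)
Your proof is correct and follows essentially the same approach as the paper: both directions are handled the same way, with Herbrand's theorem applied to the prenexed implication and then a cosmetic adjustment to force the extracted term below $s(\bar{x})$ unconditionally. The only difference is that the paper sets the adjusted term to $0$ when $g(\bar{x},y) > s(\bar{x})$ rather than using $\min$, but this is immaterial.
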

\begin{proof}
One direction is trivial. For the other, assume 
\[
\PV_k \vdash \exists y \leq t(\bar{x}) G(\bar{x}, y) \to \exists z \leq s(\bar{x}) H(\bar{x}, z).
\]
Therefore, $\PV_k \vdash \forall y \exists z\, [(y \leq t(\bar{x}) \wedge G(\bar{x}, y)) \to (z \leq s(\bar{x}) \wedge H(\bar{x}, z))]$. By Herbrand's theorem, Theorem \ref{Herbrand}, there exists an $\mathcal{L}_{\PV_k}$-term $g(\bar{x}, y)$ such that
\[
\PV_k \vdash  [y \leq t(\bar{x}) \wedge G(\bar{x}, y)] \to [g(\bar{x}, y) \leq s(\bar{x}) \wedge H(\bar{x}, g(\bar{x}, y))].
\]
Define
\[
f(\bar{x}, y)=
\begin{cases}
g(\bar{x}, y) &  g(\bar{x}, y) \leq s(\bar{x})\\
0 & g(\bar{x}, y) > s(\bar{x})
\end{cases} 
\]
It is easy to represent $f(\bar{x}, y)$ as an $\mathcal{L}_{\PV_k}$-term. By definition, it is clear that  $\PV_k \vdash \forall y \leq t(\bar{x})  [f(\bar{x}, y) \leq s(\bar{x})]$. Moreover, it is easy to see that $\PV_k \vdash \forall y \leq t(\bar{x}) [G(\bar{x}, y) \to H(\bar{x}, f(\bar{x}, y))]$.
\end{proof}

As explained in the opening of this subsection, for $l=1$, the combination of witnessing by $k$-flows, moving from $\PV$ to $\PV_k$ and using Theorem \ref{1ImpToRed} provides an explicit witnessing theorem for theories $S^k_2$ and $T^k_2$. This is what we will come back to in Corollary \ref{k1PLSWitnessing}. However, as the combination has a natural form itself, it is worth defining it directly.

\begin{definition}\label{k1PLS}
Let $A(\bar{x}, y) \in \hat{\Pi}^b_{k-1}$ be an $\mathcal{L}_{\PV}$-formula and $t(\bar{x})$ and $r(\bar{x})$ be two $\mathcal{L}_{\PV}$-terms. By a \emph{$\PLS_{(k, 1)}$-program} for $(A(\bar{x}, y), r(\bar{x}))$ with the length $t(\bar{x})$, we mean the following data:
an initial $\mathcal{L}_{\PV_k}$-term $i(\bar{x})$,
a quantifier-free $\mathcal{L}_{\PV_k}$-formula $G(\bar{x}, u, z)$,
an $\mathcal{L}_{\PV_k}$-term $N(\bar{x}, u, z)$ and 
an $\mathcal{L}_{\PV_k}$-term $p(\bar{x}, z)$, such that:
\begin{description}
\item[$\bullet$]
$\PV_k \vdash i(\bar{x}) \leq s(\bar{x})$,
\item[$\bullet$]
$\PV_k \vdash G(\bar{x}, 0,  i(\bar{x}))$,
\item[$\bullet$]
$\PV_k \vdash \forall z \leq s(\bar{x}) [N(\bar{x}, u, z) \leq s(\bar{x})]$,
\item[$\bullet$]
$\PV_k \vdash \forall z \leq s(\bar{x}) \, [G(\bar{x}, u, z) \to G(\bar{x}, u+1, N(\bar{x}, u, z))]$,
\item[$\bullet$]
$\PV_k \vdash \forall z \leq 
 s(\bar{x}) [p(\bar{x}, z) \leq r(\bar{x})]$,
\item[$\bullet$]
$\PV_k \vdash \forall z \leq 
 s(\bar{x})  [G(\bar{x}, t(\bar{x}), z) \to A(\bar{x}, p(\bar{x}, z))] $.
\end{description}
By $\PLS_{(k, 1)}$, we mean the class of all the pairs $(A(\bar{x}, y), r(\bar{x}))$ for which there exists a $\PLS_{(k, 1)}$-program. By $\PLS_{(k, 1)}^p$, we mean the class of all the pairs $(A(\bar{x}, y), r(\bar{x}))$ for which there exists a $\PLS_{(k, 1)}$-program with a polynomial length, i.e., $t(\bar{x})=q(|\bar{x}|)$, for some polynomial $q$.
\end{definition}

It is easy to see that if $(A(\bar{x}, y), r(\bar{x})) \in \PLS_{(k, 1)}$ then $\forall \bar{x} \exists y \leq r(\bar{x}) A(\bar{x}, y)$ holds and the $\PLS_{(k, 1)}$-program actually provides an algorithm to compute $y \leq r(\bar{x})$ from $\bar{x}$. Denoting $G(\bar{x}, u, z)$ by $G^u$, the algorithm starts at the zeroth level with an initial value $i(\bar{x})$ bounded by $s(\bar{x})$ satisfying the property $G^0$. Then, using the modification $N$, it goes from one level to the next updating any value $z \leq s(\bar{x})$ with the property $G^u$ to a value satisfying the property $G^{u+1}$. Note that the modification always respects the bound $s(\bar{x})$. Finally, reaching the level $t(\bar{x})$, the algorithm uses $p$ to compute $y \leq r(\bar{x})$ satisfying $A$ from any value $z \leq s(\bar{x})$ with the property $G^{t(\bar{x})}$. 

There are two points to emphasize here. First, the case $k=1$, where the predicate $G(\bar{x}, y, z)$ and all the functions $i(\bar{x})$, $N(\bar{x}, u, z)$ and $p(\bar{x}, z)$ are polynomial time computable is just another presentation of the well-known polynomial local search problems, ($\PLS$ for short), see \cite{PLS,JanBook}. Therefore, one can simply read $\PLS_{(k, 1)}$-programs as a generalization of $\PLS$ from polynomial time to the $k$-th level of the polynomial hierarchy, where the predicate $G(\bar{x}, u, z)$ and the functions $i(\bar{x})$, $N(\bar{x}, u, z)$ and $p(\bar{x}, z)$ are all allowed to be on the $k$-th level of the hierarchy. It is also worth mentioning that our $\PLS_{(k, 1)}$-programs are similar to but weaker than $\Pi^b_k-\PLS$ problems with $\Pi^b_l$-goals defined in \cite{BBPLS}, where the functions $i(\bar{x})$, $N(\bar{x}, u, z)$ and $p(\bar{x}, z)$ (and not the predicate $G(\bar{x}, u, z)$) must be polynomial-time computable and everything must be provable in $S^1_2$ rather than in $\PV_k$. The second point is about the $\PLS^p_{(k, 1)}$-programs with a polynomial length. For these programs, the algorithm we just provided can efficiently (relative to the level of the polynomial hierarchy, of course) compute the value of $y$ as it only needs to iterate the modification function $N$ for polynomially many times. In other words, we can pack the whole algorithm in one single $\mathcal{L}_{\PV_k}$-term as a formalized version of a $\Box^p_k$-function that computes $y$. We will come back to this observation in Corollary \ref{FunctionsOfS}, where we reprove a well-known witnessing theorem for $S^k_2$, characterizing the $\hat{\Sigma}^b_k$-definable functions of $S^k_2$ as the ones in the $k$-th level of the polynomial hierarchy.

\begin{remark}\label{RemarkOn1Reduction}
Employing the game interpretation we explained before, a $\PLS_{(k, 1)}$ program for $(A(\bar{x}, y), r(\bar{x}))$ with the length $t(\bar{x})$ is nothing but the following three $(k, 1)$-reductions:
\begin{itemize}
\item[$\bullet$]
$i(\bar{x})$ as a $(k, 1)$-reduction from $(G(\bar{x}, 0, z), s(\bar{x}))$ to $(\top,  s(\bar{x}))$.
\item[$\bullet$]
$N(\bar{x}, u, z)$ as a $(k, 1)$-reduction from the game $(G(\bar{x}, u+1, z),
s(\bar{x}))$ to the game $(G(\bar{x}, u, z), s(\bar{x}))$. Notice that $u$ is also a parameter here.
\item[$\bullet$]
$p(\bar{x}, z)$ as a $(k, 1)$-reduction from the game $(A(\bar{x}, y), r(\bar{x}))$ to the game $(G(\bar{x}, t(\bar{x}), z), s(\bar{x}))$. 
\end{itemize}
Notice that the formula $A(\bar{x}, y)$ is not quantifier-free in $\mathcal{L}_{\PV_k}$ and hence we cannot read the pair $(A(\bar{x}, y), r(\bar{x}))$ as a $(k, 1)$-game. However, as $A(\bar{x}, y)$ is in $\hat{\Pi}^b_{k-1}$, it is $\PV_k$-equivalent to a quantifier-free formula and hence we can pretend that it is quantifier-free. 
Having that observation, we can use Theorem \ref{1ImpToRed} to see that there is a $\PLS_{(k, 1)}$ program for $(A(\bar{x}, y), r(\bar{x}))$ with the length $t(\bar{x})$ iff there exist a quantifier-free $\mathcal{L}_{\PV_k}$-formula $G(\bar{x}, u, z)$ and an $\mathcal{L}_{\PV}$-term $s(\bar{x})$ such that:
\begin{itemize}
\item[$\bullet$]
$\PV_{k} \vdash \exists z \leq s(\bar{x}) G(\bar{x}, 0, z)$.
\item[$\bullet$]
$\PV_{k} \vdash \exists z \leq s(\bar{x}) G(\bar{x}, u, z) \rightarrow \exists z \leq s(\bar{x}) G(\bar{x}, u+1, z)$.
\item[$\bullet$]
$\PV_{k} \vdash  \exists z \leq s(\bar{x}) G(\bar{x}, t(\bar{x}), z) \to \exists y \leq r(\bar{x}) A(\bar{x}, y)$.
\end{itemize}
\end{remark}

The next Corollary uses $\PLS_{(k, 1)}$-programs (resp. $\PLS^p_{(k, 1)}$-programs) to witness the theorems of $S^k_2$ (resp. $T^k_2$) as promised before. 

\begin{corollary}\label{k1PLSWitnessing}
Let $k \geq 1$, $A(\bar{x}, y) \in \hat{\Pi}^b_{k-1}$ and $r(\bar{x})$ be an $\mathcal{L}_{\PV}$-term:
\begin{description}
\item[$(i)$]
$S^{k}_2 \vdash \forall \bar{x} \exists y \leq r(\bar{x}) A(\bar{x}, y)$ iff $(A(\bar{x}, y), r(\bar{x})) \in \PLS^p_{(k, 1)}$. 
\item[$(ii)$]
$T^{k}_2 \vdash \forall \bar{x} \exists y \leq r(\bar{x}) A(\bar{x}, y)$ iff $(A(\bar{x}, y), r(\bar{x})) \in \PLS_{(k, 1)}$.
\end{description}
\end{corollary}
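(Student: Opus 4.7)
The plan is to combine Theorem \ref{kMain} (the $k$-flow characterization of $S^k_2$ and $T^k_2$) with Theorem \ref{1ImpToRed} (Herbrand extraction of $(k,1)$-reductions), using the game-theoretic reformulation of $\PLS_{(k,1)}$-programs given in Remark \ref{RemarkOn1Reduction}. I will spell out part $(i)$; part $(ii)$ follows by replacing ``polynomial length'' with ``arbitrary term length'' and $\hat{\Sigma}^b_k$-$\LInd$ with $\hat{\Sigma}^b_k$-$\Ind$ throughout.

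For the forward direction of $(i)$, assume $S^k_2 \vdash \forall \bar{x}\,\exists y \leq r(\bar{x})\, A(\bar{x}, y)$. Since $A \in \hat{\Pi}^b_{k-1}$, its de Morgan negation lies in $\hat{\Sigma}^b_{k-1}$, so $\forall y \leq r(\bar{x})\, \neg A(\bar{x},y)$ is in $\hat{\Pi}^b_k$ and the assumption is equivalent to $S^k_2 \vdash \forall y \leq r(\bar{x})\, \neg A(\bar{x},y) \to \bot$. Theorem \ref{kMain}$(i)$ then yields a polynomial $k$-flow: a formula $H(u, \bar{x}) \in \hat{\Pi}^b_k$ and a term $t(\bar{x})=q(|\bar{x}|)$ such that $H(0,\bar{x})$ is $\PV$-equivalent to $\forall y \leq r(\bar{x})\, \neg A(\bar{x},y)$, $H(t(\bar{x}), \bar{x})$ is $\PV$-equivalent to $\bot$, and $\PV \vdash \forall u < t(\bar{x})\,[H(u,\bar{x}) \to H(u+1,\bar{x})]$.

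The next step is to pass to $\PV_k$, in which every $\hat{\Pi}^b_k$-formula is equivalent to one of the form $\forall z \leq s(\bar{x},u)\, G(\bar{x}, u, z)$ with $G$ quantifier-free in $\mathcal{L}_{\PV_k}$. By the uniform-bounding trick of Lemma \ref{Bounded}, the bound $s(\bar{x},u)$ can be replaced by a single $\mathcal{L}_{\PV}$-term $s(\bar{x})$ dominating it for all $u \leq t(\bar{x})$. Negating throughout and reversing the index by the substitution $G^{\ast}(\bar{x},u,z) := \neg G(\bar{x}, t(\bar{x}) \dotminus u, z)$, the three flow clauses become the following $\PV_k$-provable facts: $\exists z \leq s(\bar{x})\, G^{\ast}(\bar{x}, 0, z)$; the implication $\exists z \leq s(\bar{x})\, G^{\ast}(\bar{x}, u, z) \to \exists z \leq s(\bar{x})\, G^{\ast}(\bar{x}, u+1, z)$; and $\exists z \leq s(\bar{x})\, G^{\ast}(\bar{x}, t(\bar{x}), z) \to \exists y \leq r(\bar{x})\, A(\bar{x}, y)$. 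These are exactly the three one-turn game reductions listed in Remark \ref{RemarkOn1Reduction}, so Theorem \ref{1ImpToRed} applied three times extracts $\mathcal{L}_{\PV_k}$-terms $i(\bar{x})$, $N(\bar{x}, u, z)$ and $p(\bar{x}, z)$ forming a $\PLS^p_{(k,1)}$-program of length $t(\bar{x})$ for $(A(\bar{x},y), r(\bar{x}))$.

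For the reverse direction, a $\PLS_{(k,1)}$-program gives, in $\PV_k$ and hence in $S^k_2$, the base fact $\exists z \leq s(\bar{x})\, G(\bar{x}, 0, z)$, the step $\exists z \leq s(\bar{x})\, G(\bar{x}, u, z) \to \exists z \leq s(\bar{x})\, G(\bar{x}, u+1, z)$, and the final implication $\exists z \leq s(\bar{x})\, G(\bar{x}, t(\bar{x}), z) \to \exists y \leq r(\bar{x})\, A(\bar{x}, y)$. Since any quantifier-free $\mathcal{L}_{\PV_k}$-formula is $\hat{\Sigma}^b_k \cap \hat{\Pi}^b_k$ when read in $\mathcal{L}_{\PV}$, the driving formula $F(u,\bar{x}) := \exists z \leq s(\bar{x})\, G(\bar{x},u,z)$ is $\hat{\Sigma}^b_k$. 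For $(i)$ the polynomial length $t(\bar{x}) = q(|\bar{x}|)$ is reached via $\hat{\Sigma}^b_k$-$\LInd$, provable in $S^k_2$; for $(ii)$ an arbitrary length is handled by $\hat{\Sigma}^b_k$-$\Ind$, provable in $T^k_2$. In both cases $F(t(\bar{x}),\bar{x})$ together with the final clause delivers $\exists y \leq r(\bar{x})\, A(\bar{x}, y)$. The main bookkeeping hurdle will be the normalization of $H(u,\bar{x})$ into the canonical $\forall z \leq s(\bar{x})\, G$ form with a $u$-independent bound $s$, which I expect to be a routine adaptation of Lemma \ref{Bounded}; everything else is straightforward assembly of Theorems \ref{kMain} and \ref{1ImpToRed} with the standard induction principles of bounded arithmetic.
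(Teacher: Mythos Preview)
Your proposal is correct and follows essentially the same route as the paper: Theorem~\ref{kMain} to obtain a (polynomial) $k$-flow from $\forall y\le r(\bar x)\,\neg A(\bar x,y)$ to $\bot$, prenexing the flow formula as $\forall z\le s\,J$ with $J\in\hat\Sigma^b_{k-1}$, passing to $\PV_k$ so that $J$ becomes quantifier-free, negating and reversing the index via $t(\bar x)\dotminus u$, and then invoking Remark~\ref{RemarkOn1Reduction}/Theorem~\ref{1ImpToRed}; the converse via $\hat\Sigma^b_k$-$\LInd$ (resp.\ $\Ind$) is identical to the paper. One cosmetic remark: your appeal to Lemma~\ref{Bounded} for making the bound $s(\bar x,u)$ independent of $u$ is a misattribution---that lemma is about eliminating a bounded free variable from the \emph{length} of a flow, not from a quantifier bound---but the actual maneuver you describe (take a monotone $s'$ and set $s(\bar x)=s'(t(\bar x),\bar x)$, absorbing $z\le s'(\bar x,u)$ into the matrix) is exactly what the paper does in its proof, so no real gap.
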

\begin{proof}
We only prove $(i)$. The proof of $(ii)$ is similar. For the right to left direction, if there exists a $\PLS_{(k, 1)}$-program for $(A(\bar{x}, y), r(\bar{x}))$ with the length $q(|\bar{x}|)$, for some polynomial $q$, using Remark \ref{RemarkOn1Reduction}, there are quantifier-free $\mathcal{L}_{\PV_k}$-formula $G(\bar{x}, u, z)$ and an $\mathcal{L}_{\PV}$-term $s(\bar{x})$ such that:
\begin{itemize}
\item[$\bullet$]
$\PV_{k} \vdash \exists z \leq s(\bar{x}) G(\bar{x}, 0, z)$.
\item[$\bullet$]
$\PV_{k} \vdash \exists z \leq s(\bar{x}) G(\bar{x}, u, z) \rightarrow \exists z \leq s(\bar{x}) G(\bar{x}, u+1, z)$.
\item[$\bullet$]
$\PV_{k} \vdash  \exists z \leq s(\bar{x}) G(\bar{x}, q(|\bar{x}|), z) \to \exists y \leq r(\bar{x}) A(\bar{x}, y)$.
\end{itemize}
Since $\PV_k$ is interpretable in $S^k_2$, mapping all quantifier-free $\mathcal{L}_{\PV_k}$-formulas to $\mathcal{L}_{\PV}$-formulas in $\hat{\Sigma}^b_k$, we can pretend that $G(\bar{x}, u, z) \in \hat{\Sigma}^b_k$ and all the above formulas are also provable in $S^k_2$.
Finally, since the theory $S^k_2$ has the axiom $\hat{\Sigma}^b_k-\LInd$ and $\exists z \leq s(\bar{x}) G(\bar{x}, u, z) \in \hat{\Sigma}^b_k$, we have $S^k_2 \vdash \exists y \leq r(\bar{x}) A(\bar{x}, y)$. 
For the other direction, assume $S^{k}_2 \vdash \forall \bar{x} \exists y \leq r(\bar{x}) A(\bar{x}, y)$. Hence, $S^{k}_2 \vdash \forall y \leq r(\bar{x}) \; \neg A(\bar{x}, y) \rightarrow \bot$. By Theorem \ref{kMain}, $\forall y \leq r(\bar{x}) \; \neg A(\bar{x}, y) \rhd^p_k \bot$.
Therefore,
there exist a polynomial $q$ and a formula $H(u, \bar{x}) \in \hat{\Pi}^b_{k}$ such that:
\begin{itemize}
\item[$\bullet$]
$\PV \vdash H(0, \bar{x}) \leftrightarrow [\forall y \leq r(\bar{x}) \; \neg A(\bar{x}, \bar{y})]$.
\item[$\bullet$]
$\PV \vdash H(q(|\bar{x}|), \bar{x}) \leftrightarrow \bot$.
\item[$\bullet$]
$\PV \vdash \forall u < q(|\bar{x}|) \; [H(u, \bar{x}) \rightarrow H(u+1, \bar{x})]$.
\end{itemize}
Define $H'(u, \bar{x})$ as $[(u \leq q(|\bar{x}|)) \to H(u, \bar{x})]$. It is easy to see that
\begin{itemize}
\item[$\bullet$]
$\PV \vdash [\forall y \leq r(\bar{x}) \; \neg A(\bar{x}, \bar{y})] \to  H'(0, \bar{x})$.
\item[$\bullet$]
$\PV \vdash H'(q(|\bar{x}|), \bar{x}) \to \bot$.
\item[$\bullet$]
$\PV \vdash H'(u, \bar{x}) \rightarrow H'(u+1, \bar{x})$.
\end{itemize}
As $\PV$ has the pairing function, it can encode finite many bounded variables as one bounded variable. Hence, without loss of generality, we can assume that $H'$ is in the prenex bounded form starting with one universal quantifier on $z$, i.e., $H'(u, \bar{x})=\forall z \leq s'(\bar{x}, u) \; I(\bar{x}, u, z)$, where $s'$ is $\PV$-monotone and $I \in \hat{\Sigma}^b_{k-1}$. Define $s(\bar{x})$ as $s'(q(|\bar{x}|), \bar{x})$. Then, it is easy to see that
\[
\PV \vdash H'(u, \bar{x}) \leftrightarrow [\forall z \leq s(\bar{x}) [(z \leq s'(u, \bar{x})) \wedge (u \leq q(|\bar{x}|)) \to I(\bar{x}, u, z)]].
\]
Hence, without loss of generality, we can assume that $H'$ is in the form $\forall z \leq s(\bar{x}) J(\bar{x}, u, z)$, where 
$J$ is in $\hat{\Sigma}^b_{k-1}$. Since $\PV$ is a subtheory of $\PV_{k}$ and in $\PV_{k}$ any formula in $\hat{\Sigma}^b_{k-1}$ is equivalent to a quantifier-free formula, we can assume that $J$ is quantifier-free in the language of $\PV_{k}$ and we have 
\begin{itemize}
\item[$\bullet$]
$\PV_{k} \vdash  [\forall y \leq r(\bar{x}) \; \neg A(\bar{x}, y)] \to \forall z \leq s(\bar{x}) J(\bar{x}, 0, z)$.
\item[$\bullet$]
$\PV_{k} \vdash \forall z \leq s(\bar{x}) J(\bar{x}, q(|\bar{x}|), z) \to \bot$.
\item[$\bullet$]
$\PV_{k} \vdash \forall z \leq s(\bar{x}) J(\bar{x}, u, z) \rightarrow \forall z \leq s(\bar{x}) J(\bar{x}, u+1, z)$.
\end{itemize}
Define $G(\bar{x}, u, z)$ as $\neg J(\bar{x}, q(|\bar{x}|) \dotminus u, z)$ and note that it is a quantifier-free $\mathcal{L}_{\PV_k}$-formula. Therefore, we have
\begin{itemize}
\item[$\bullet$]
$\PV_{k} \vdash \exists z \leq s(\bar{x}) G(\bar{x}, q(|\bar{x}|), z) \to  \exists y \leq r(\bar{x}) A(\bar{x}, y) $.
\item[$\bullet$]
$\PV_{k} \vdash \exists z \leq s(\bar{x}) G(\bar{x}, 0, z)$.
\item[$\bullet$]
$\PV_{k} \vdash \exists z \leq s(\bar{x}) G(\bar{x}, u, z) \rightarrow \exists z \leq s(\bar{x}) G(\bar{x}, u+1, z)$.
\end{itemize}
Finally, it is enough to use Remark \ref{RemarkOn1Reduction} to get a $\PLS_{(k, 1)}$-program for the pair $(A(\bar{x}, y), r(\bar{x}))$ with the length $q(|\bar{x}|)$. Hence, $(A(\bar{x}, y), r(\bar{x})) \in \PLS^p_{(k, 1)}$.
\end{proof}

Note that the second part in Corollary \ref{k1PLSWitnessing}, when applied on $k=1$, reproves the well-known characterization of the $T^1_2$-provable formulas of the form $\forall \bar{x} \exists y \leq r(\bar{x}) A(\bar{x}, y)$, where $A(\bar{x}, y) \in \hat{\Pi}^b_0$, in terms of the usual $\PLS$ problems \cite{PLS,JanBook}. Our result, however, seems a bit weaker than the one proved in \cite{PLS,JanBook}, as in the latter $y$ is not assumed to be bounded and $A(\bar{x}, y)$ can be in $\hat{\Sigma}^b_1$ rather than in our lower class of $\hat{\Pi}^b_0$. However, proving the stronger form from the one we provided is just a standard technique. First, notice that the presence of $r(\bar{x})$ is no restriction, thanks to Parikh theorem. Secondly, to reduce the complexity of $A(\bar{x}, y)$, it is enough to write $A(\bar{x}, y)$ in the form $\exists \bar{z} \leq \bar{x}(\bar{x}) B(\bar{x}, y, \bar{z})$, where $B(\bar{x}, y, \bar{z}) \in \hat{\Pi}^b_0$. Then, using the pairing function available in $\PV$, we can make $y$ and all the variables $\bar{z}$ into one bounded variable $w \leq t(\bar{x})$. Now, we can apply Corollary \ref{k1PLSWitnessing} to compute $w$ by a $\PLS_{(k, 1)}$-program. With this technique, the $\PLS_{(k, 1)}$-program not only computes the intended variable $y$, but it also finds a value for the variables $\bar{z}$. To retrieve our formula $A(\bar{x}, y)$, we can simply keep the computation for $y$ and forget the other values for $\bar{z}$ by reintroducing their existential quantifiers. 
Having this observation about the usual $\PLS$, one can read Corollary \ref{k1PLSWitnessing} as a generalization of the mentioned characterization for $T^1_2$ to cover both $T^k_2$ and $S^k_2$, for any $k \geq 1$. However,  the latter case can be strengthened even further as the polynomial $\PLS_{(k, 1)}$-program provided in Corollary \ref{k1PLSWitnessing} can be simplified to one single $\mathcal{L}_{\PV_k}$-term. This reproves the following well-known witnessing theorem for $S^k_2$ \cite{BussThesis,JanBook}.  
\begin{corollary}\label{FunctionsOfS}
The provably $\hat{\Sigma}^b_k$-definable functions of $S_2^k$ are in $\Box_{k}^p$. Even better, if $S^k_2 \vdash \forall \bar{x} \exists y A(\bar{x}, y)$, where $A(\bar{x}, y) \in \hat{\Sigma}^b_k$, then there exists a function $f \in \Box_k^p$ represented as an $\mathcal{L}_{\PV_k}$-term such that $\PV_k \vdash \forall \bar{x} \, A(\bar{x}, f(\bar{x}))$.
\end{corollary}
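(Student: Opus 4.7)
The plan is to combine the polynomial witnessing of Corollary~\ref{k1PLSWitnessing}(i) with the fact that $\mathcal{L}_{\PV_k}$-terms are closed under bounded recursion on notation, so that the polynomially-many iterations of the modification function in a $\PLS^p_{(k,1)}$-program collapse into a single $\PV_k$-term.

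First, I would reduce to the bounded $\hat{\Pi}^b_{k-1}$ case. By Parikh's theorem, $S^k_2 \vdash \forall \bar{x} \exists y A(\bar{x}, y)$ with $A \in \hat{\Sigma}^b_k$ implies $S^k_2 \vdash \forall \bar{x} \exists y \leq r(\bar{x}) A(\bar{x}, y)$ for some $\mathcal{L}_{\PV}$-term $r$. Writing $A$ in prenex form as $\exists \bar{z} \leq \bar{t}(\bar{x}, y) B(\bar{x}, y, \bar{z})$ with $B \in \hat{\Pi}^b_{k-1}$, and using the $\PV$-definable pairing to pack $(y, \bar{z})$ into a single variable $w$ bounded by some term $s(\bar{x})$, we obtain a $\hat{\Pi}^b_{k-1}$-formula $B'(\bar{x}, w)$ with $S^k_2 \vdash \forall \bar{x} \exists w \leq s(\bar{x}) B'(\bar{x}, w)$, where $\PV$ proves that the first projection $\pi_1(w)$ recovers a valid witness $y \leq r(\bar{x})$ for $A$.

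Next, I apply Corollary~\ref{k1PLSWitnessing}(i) to $(B'(\bar{x}, w), s(\bar{x}))$, obtaining a $\PLS^p_{(k,1)}$-program of polynomial length $q(|\bar{x}|)$: an initial $\mathcal{L}_{\PV_k}$-term $i(\bar{x})$, a quantifier-free $\mathcal{L}_{\PV_k}$-invariant $G(\bar{x}, u, z)$, a modification term $N(\bar{x}, u, z)$ and a projection term $p(\bar{x}, z)$ satisfying the conditions in Definition~\ref{k1PLS} over $\PV_k$. The key step is then to define, inside $\PV_k$, a term $F(\bar{x}, u)$ satisfying $F(\bar{x}, 0) = i(\bar{x})$ and $F(\bar{x}, u+1) = N(\bar{x}, u, F(\bar{x}, u))$, truncated to stay below $s(\bar{x})$. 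Since $q(|\bar{x}|)$ is polynomial in $|\bar{x}|$, one can take a smash-type term $\tau(\bar{x})$ with $|\tau(\bar{x})| \geq q(|\bar{x}|)$ and represent $F$ via bounded recursion on notation on $\tau$; this is available because, by Cobham-style closure at the $k$-th level, every $\Box^p_k$ function (in particular the iteration of $N$) is an $\mathcal{L}_{\PV_k}$-term. A routine $\hat{\Pi}^b_k$-induction, provable in $S^k_2$ and hence already in $\PV_k$ after quantifier-free equivalent reformulation of $G$, gives $G(\bar{x}, u, F(\bar{x}, u))$ for all $u \leq q(|\bar{x}|)$.

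Setting $f(\bar{x}) := \pi_1\bigl(p(\bar{x}, F(\bar{x}, q(|\bar{x}|)))\bigr)$ then yields an $\mathcal{L}_{\PV_k}$-term, hence a $\Box^p_k$-function, with $\PV_k \vdash A(\bar{x}, f(\bar{x}))$. The main obstacle I anticipate is verifying that the bounded recursion on notation packaging $F$ and the induction establishing $G(\bar{x}, u, F(\bar{x}, u))$ can both genuinely be carried out inside the universal theory $\PV_k$ (rather than only in $S^k_2$); this depends on the Cobham-style construction principles built into $\PV_k$ and on the fact that the invariant $G$ is quantifier-free in the language of $\PV_k$, so its preservation along $F$ reduces to an iterated quantifier-free implication, which is handled by the inductive construction of the term itself rather than by an external induction axiom.
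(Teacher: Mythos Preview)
Your proposal is correct and follows essentially the same approach as the paper: reduce to a bounded $\hat{\Pi}^b_{k-1}$ statement via Parikh and pairing, extract a polynomial $\PLS_{(k,1)}$-program from Corollary~\ref{k1PLSWitnessing}(i), collapse the polynomially many iterations of $N$ into a single $\mathcal{L}_{\PV_k}$-term using bounded recursion on notation, and verify the invariant by quantifier-free induction inside $\PV_k$.

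One small point of presentation: your phrasing ``a routine $\hat{\Pi}^b_k$-induction, provable in $S^k_2$ and hence already in $\PV_k$ after quantifier-free equivalent reformulation of $G$'' is unnecessarily roundabout. The formula $G(\bar{x},u,z)$ is \emph{already} quantifier-free in $\mathcal{L}_{\PV_k}$ by the definition of a $\PLS_{(k,1)}$-program, so $G(\bar{x},u,F(\bar{x},u))$ is quantifier-free and you can invoke the quantifier-free $\PInd$ of $\PV_k$ directly, with no detour through $S^k_2$. The paper does exactly this, parametrizing by $w$ so that $|w|$ serves as your counter $u$ and applying $\PInd$ to the quantifier-free formula $G(\bar{x},|w|,M(w,\bar{x}))$; your successor-indexed $F(\bar{x},u)$ is just the reindexing $M(w,\bar{x})$ with $u=|w|$, and your anticipated obstacle dissolves once you state it this way.
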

\begin{proof}
Following the technique we just described, without loss of generality, we can assume that $A(\bar{x}, y)$ has no existential quantifier in its front and hence it is actually in $\hat{\Pi}^{b}_{k-1}$. By Parikh theorem, there exists an $\mathcal{L}_{\PV}$-term $r(\bar{x})$ such that
$S^k_2 \vdash \forall \bar{x} \exists y \leq r(\bar{x}) A(\bar{x}, y)$.
By Corollary \ref{k1PLSWitnessing}, there exists a $\PLS_{(k, 1)}$-program for $(A(\bar{x}, y), r(\bar{x}))$ with the length $q(|\bar{x}|)$, for a polynomial $q$. Let $G(\bar{x}, u, z)$, $i(\bar{x})$, $N(\bar{x}, u, z)$ and $p(\bar{x}, z)$ be the data of the $\PLS_{(k, 1)}$-program. By recursion on notation on $w$, define the function $M(w, \bar{x})$ as
\[
\begin{cases}
M(0, \bar{x})=i(\bar{x}) &  \\
M(w, \bar{x})=N(\bar{x}, |w| \dotminus 1, M(\lfloor \frac{w}{2} \rfloor, \bar{x})) & w > 0\\
\end{cases} 
\]
Recall that the $\mathcal{L}_{\PV_k}$-terms are closed under bounded recursion on notation.
As both $i$ and $N$ are $\mathcal{L}_{\PV_k}$-terms and $i(\bar{x})$ is bounded by $s(\bar{x})$ and $N(\bar{x}, u, z)$ maps any $z \leq s(\bar{x})$ to something below $s(\bar{x})$, we can make sure that the function $M(w, \bar{x})$ is also representable as an $\mathcal{L}_{\PV_k}$-term. Now, define $I(\bar{x}, w, z)$ as $G(\bar{x}, |w|, z)$. Using the properties of the $\PLS_{(k, 1)}$-program, it is clear that
\begin{description}
\item[$\bullet$]
$\PV_k \vdash I(\bar{x}, 0,  i(\bar{x}))$,
\item[$\bullet$]
$\PV_k \vdash \forall z \leq s(\bar{x}) \forall w > 0 \, [I(\bar{x}, \lfloor \frac{w}{2} \rfloor, z) \to I(\bar{x}, w, N(\bar{x}, |w| \dotminus 1, z))]$.
\end{description}
Therefore, as $\PV_k \vdash M(w, \bar{x}) \leq s(\bar{x})$, by using the axiom $\PInd$ on the quantifier-free formula $I(\bar{x}, w, M(w, \bar{x}))$, we can prove
$\PV_k \vdash I(\bar{x}, w, M(w, \bar{x}))$. Substituting $w=\lfloor \frac{2^{q(|\bar{x}|)}}{2} \rfloor$, we reach
\[
\PV_k \vdash I(\bar{x}, |\lfloor \frac{2^{q(|\bar{x}|)}}{2} \rfloor|, M(\lfloor \frac{2^{q(|\bar{x}|)}}{2} \rfloor, \bar{x})).
\]
Using the fact that $\PV_k \vdash |\lfloor \frac{2^{q(|\bar{x}|)}}{2} \rfloor|=q(|\bar{x}|)$, we have
\[
\PV_k \vdash G(\bar{x}, q(|\bar{x}|), M(\lfloor \frac{2^{q(|\bar{x}|)}}{2} \rfloor, \bar{x})).
\]
Therefore, as $p(\bar{x}, z)$ is an $\mathcal{L}_{\PV_k}$-term
and it has the property
\[
\PV_k \vdash \forall z \leq s(\bar{x})[G(\bar{x}, q(|\bar{x}|), z) \to A(\bar{x}, p(\bar{z}, z))],
\]
we can define $f(\bar{x})=p(\bar{x}, M(\lfloor \frac{2^{q(|\bar{x}|)}}{2} \rfloor, \bar{x}))$ as an $\mathcal{L}_{\PV_k}$-term. Therefore, $\PV_k \vdash \forall \bar{x} A(\bar{x}, f(\bar{x}))$. 
\end{proof}

\subsubsection{The case $l=2$}
Let $(G(\bar{x}, y, z), t(\bar{x}))$ and $(H(\bar{x}, v, w), s(\bar{x}))$ be two $(k, 2)$-games. To use a winning strategy for $(G(\bar{x}, y, z), t(\bar{x}))$ to design one for $(H(\bar{x}, v, w), s(\bar{x}))$, the most trivial way is using two functions $f(\bar{x}, y)$ and $g(\bar{x}, y, w)$, where $f(\bar{x}, y)$ reads the first move $y \leq t(\bar{x})$ in $(G, t)$ and computes a first move $v \leq s(\bar{x})$ in $(H, s)$. Then, $g(\bar{x}, y, w)$ reads the second move $w \leq s(\bar{x})$ in $(H, s)$ and computes a second move $z \leq t(\bar{x})$ in $(G, t)$. These computations must be in a way that if the play $(y, z)$ is winning in $(G, t)$, the play $(v, w)$ is winning in $(H, t)$. Expecting the whole reduction process to be simple relative to $\PV_k$, we have:
\begin{definition}
Let $(G(\bar{x}, y, z), t(\bar{x}))$ and $(H(\bar{x}, v, w), s(\bar{x}))$ be two $(k, 2)$-games. By a \emph{deterministic $(k, 2)$-reduction} from $(H, s)$ to $(G, t)$, we mean two $\mathcal{L}_{\PV_k}$-terms $f(\bar{x}, y)$ and $g(\bar{x}, y, w)$ such that:
\begin{itemize}
    \item[$\bullet$]
     $\PV_k \vdash \forall y \leq t(\bar{x}) [f(\bar{x}, y) \leq s(\bar{x})]$.
      \item[$\bullet$]
     $\PV_k \vdash \forall w \leq s(\bar{x}) \forall y \leq t(\bar{x}) [g(\bar{x}, y, w) \leq t(\bar{x})]$.
     \item[$\bullet$]
     $\PV_k \vdash \forall w \leq s(\bar{x}) \forall y \leq t(\bar{x}) [G(\bar{x}, y, g(\bar{x}, y, w)) \to H(\bar{x}, f(\bar{x}, y), w)]$.
\end{itemize}
\end{definition}
In a similar fashion to what we had
n the previous subsubsection, we expect an equivalence between the provability of 
\[
\exists y \leq t(\bar{x}) \forall z \leq t(\bar{x}) G(\bar{x}, y, z) \to 
\exists v \leq s(\bar{x}) \forall w \leq s(\bar{x}) H(\bar{x}, v, w)
\]
in $\PV_k$ and the existence of a deterministic $(k, 2)$-reduction from $(H, s)$ to $(G, t)$. Unfortunately, this expected equivalence does not exist, unless a hardness conjecture in complexity theory fails. Let us first explain this conjecture.\\

Let $U, V \subseteq \mathbb{N}$ be two disjoint $\mathbf{NP}$-sets. We call a polynomial time computable $S \subseteq \mathbb{N}$ a \emph{separator} for $U$ and $V$, if $U \subseteq S$ and $S \cap V=\varnothing$. The hardness conjecture we want to use states that there are two disjoint $\mathbf{NP}$-sets $U$ and $V$ that have no separator. 

\begin{example} \label{Impossibility}
Let $U$ and $V$ be two disjoint $\mathbf{NP}$-sets that have no separator and represent them by the $\mathcal{L}_{\PV}$-formulas $\exists y \leq s_B(x) B(x, y)$ and $\exists y \leq s_C(x) C(x, y)$, respectively, where
$B(x, y)$ and $C(x, z)$ are two quantifier-free $\mathcal{L}_{\PV}$-formulas and $s_B(x)$ and $s_C(x)$ are two $\mathcal{L}_{\PV}$-terms.  First, notice that without loss of generality, we can always assume that $s_B(x)=s_C(x)$ and $\forall x (s_B(x) >0)$ holds in the standard model. The reason is that we can replace $\exists y \leq s_B(x) B(x, y)$ by 
\[
\exists y \leq s_B(x)+s_C(x)+1 \,[y \leq s_B(x) \wedge B(x, y)]
\]
and similarly for $\exists z \leq s_C(x) C(x, z)$. From now on, denote both $s_B(x)$ and $s_C(x)$, by the common name $s(x)$. Moreover, notice that as $U \cap V=\varnothing$, the formula 
$\forall y \leq s(x) \neg B(x, y) \vee \forall z \leq s(x) \neg C(x, z)$ is true, for any value for $x$. Now,
let 
\[
A(x, w, y, z)=(w=0 \to \neg B(x, y)) \wedge (w \neq 0 \to \neg C(x, z)).
\]
It is clear that the formula
\[
\exists w_0w_1 \leq s(x) \forall y_0y_1z_0z_1 \leq s(x) \; [A(x, w_0, y_0, z_0) \vee A(x, w_1, y_1, z_1)]
\]
logically implies 
\[
\exists w \leq s(x) \forall yz \leq s(x) \; A(x, w, y, z)
\]
and hence the implication is provable in $\PV$. 
Unfortunately, in both formulas, some of the quantifier blocks have more than one bounded quantifiers, and hence, the formulas cannot be read as $(k, 2)$-games. However, using the pairing function and its projections available in $\PV$, it is not hard to change the formulas to $\PV$-equivalent formulas in the right form. We will avoid applying this change here as it makes everything unnecessarily complicated. Instead, we keep working with the original formulas as the one and the only exception in this paper. However, let us emphasize that whatever we claim in this example can be rewritten in a precise way using the mentioned encoding. 
Having said that, in the rest of this example, we pretend that we are working with the two $(k, 2)$-games $(A(x, w, y, z), s(x))$ and $(A(x, w_0, y_0, z_0) \vee A(x, w_1, y_1, z_1), s(x))$ and we show that there is no deterministic $(k, 2)$-reduction from the $(k, 2)$-game $(A(x, w, y, z), s(x))$ to the $(k, 2)$-game $(A(x, w_0, y_0, z_0) \vee A(x, w_1, y_1, z_1), s(x))$. For the sake of contradiction, assume that there are polynomial time computable functions $f(x, w_0, w_1)$, $g_0(x, w_0, w_1, y, z)$, $g_1(x, w_0, w_1, y, z)$, $h_0(x, w_0, w_1, y, z)$ and finally $h_1(x, w_0, w_1, y, z)$, all represented as $\mathcal{L}_{\PV}$-terms such that they read $w_0$, $w_1$, $y$ and $z$ below $s(x)$ and compute $w$, $y_0$, $y_1$, $z_0$ and $z_1$ all below $s(x)$, respectively, satisfying the property
\[
\PV \vdash \forall w_0w_1yz \leq s [(A(x, w_0, g_0, h_0) \vee A(x, w_1, g_1, h_1)) \to A(x, f, y, z)].
\]
(The arguments of the functions are omitted, for simplicity). Therefore, the formula
\[
\forall w_0w_1yz \leq s [(A(x, w_0, g_0, h_0) \vee A(x, w_1, g_1, h_1)) \to A(x, f, y, z)]
\]
is true in the standard model.
Substitute $w_0=0$ and $w_1=1$ and notice that the condition $s(x) \geq 1$ allows such a substitution.
We see that $A(x, 0, g_0, h_0)$ is equivalent to $\neg B(x, g_0)$ and $A(x, 1, g_1, h_1)$ is equivalent to $\neg C(x, h_1)$. Therefore, the following formula is true:
\[
\forall yz \leq s(x) [(\neg B(x, g_0) \vee \neg C(x, h_1)) \to A(x, f, y, z)].
\]
Therefore, we have 
\[
[\forall y \leq s(x) \neg B(x, y) \vee \forall z \leq s(x) \neg C(x, z)] \to \forall yz \leq s(x) A(x, f, y, z).
\]
Recall that as $U \cap V=\varnothing$, we know $\forall y \leq s(x) \neg B(x, y) \vee \forall z \leq s(x) \neg C(x, z)$ is true. Therefore, we reach $\forall yz \leq s(x) A(x, f, y, z)$. Now, note that if $f(x, 0, 1)=0$, the formula $A(x, f, y, z)$ is equivalent to $\neg B(x, y)$ and if $f(x, 0, 1) \neq 0$, it is equivalent to $\neg C(x, z)$. Therefore,
if $f(x, 0, 1)=0$, we have $\forall y \leq s(x) \neg B(x, y)$ and if $f(x, 0, 1) \neq 0$, we have $\forall z \leq s(x) \neg C(x, z)$. We claim that the set $S=\{x \in \mathbb{N} \mid f(x) \neq 0\}$ is a separator. First, note that $S$ is polynomial computable as $f(x, w_0, w_1)$ is a polynomial-time computable function. Secondly, it is clear that $S$ is disjoint from $V$. To show that it includes $U$, assume $x \in U$ and $f(x, 0, 1)=0$. Then, $\forall y \leq s(x) \neg B(x, y)$ which means that $x \notin U$. Therefore, we found a separator which is impossible. Hence, the claimed deterministic $(k, 2)$-reduction does not exist.
\end{example} 
As we observed in Example \ref{Impossibility}, deterministic $(k, 2)$-reductions are not even powerful enough to capture the pure logical implications between the existence of the winning strategies. To solve the problem, in the following, we strengthen the notion by relaxing the determinism in the definition.
\begin{definition}\label{2-reduction}
Let $(G(\bar{x}, y, z), t(\bar{x}))$ and $(H(\bar{x}, v, w), s(\bar{x}))$ be two $(k, 2)$-games. By a \emph{$(k, 2)$-reduction} from $(H, s)$ to $(G, t)$, we mean a finite sequence of $\mathcal{L}_{\PV_k}$-terms $f_0(\bar{x}, y)$, $f_1(\bar{x}, y, w_0)$, ..., $f_m(\bar{x}, y, w_0, \ldots, w_{m-1})$ together with an $\mathcal{L}_{\PV_k}$-term $g(\bar{x}, y, w_0, \ldots, w_{m})$ such that all the following are provable in $\PV_k$:
\begin{description}
    \item[$\bullet$]
     $\forall \bar{w} \leq s(\bar{x}) \forall y \leq t(\bar{x}) [f_i(\bar{x}, y, w_0, \ldots, w_{i-1}) \leq s(\bar{x})]$, for any $0 \leq i \leq m$.
      \item[$\bullet$]
     $\forall \bar{w} \leq s(\bar{x}) \forall y \leq t(\bar{x}) [g(\bar{x}, y, w_0, \ldots, w_{m}) \leq t(\bar{x})]$.
     \item[$\bullet$]
     $
     \forall \bar{w} \leq s(\bar{x}) \forall y \leq t(\bar{x}) [G(\bar{x}, y, g(\bar{x}, y, w_0, \ldots, w_{m})) \to \bar{H}(\bar{x}, y, w_0, \ldots, w_m)]$, where $\bar{H}(\bar{x}, y, w_0, \ldots, w_m)$ is $\bigvee_{i=0}^m H(\bar{x}, f_i(\bar{x}, y, w_0, \ldots, w_{i-1}), w_i)$.
\end{description}
\end{definition}

\begin{remark}
Here is a computational interpretation of a $(k, 2)$-reduction as a non-deterministic version of the deterministic $(k,2)$-reductions we had before. A $(k, 2)$-reduction starts with reading the first move $y \leq t(\bar{x})$ in $(G, t)$ and uses $f_0$ to transform it to a first move $v_0 \leq s(\bar{x})$ in $(H, s)$. Then, as before it reads the second move $w_0 \leq s(\bar{x})$ in $(H, s)$. However, instead of using it to find a second move in $(G, t)$, it uses $f_1$ to come up with another possible first move $v_1 \leq s(\bar{x})$ in $(H, s)$ and asks about its second move $w_1$. It keeps repeating this procedure to finally after $m+1$ many enquiries, it uses $g$ to compute the second move in $(G, t)$. These computations are in a way that if the produced play for $(G, t)$ is winning, then \emph{one} of the produced plays for $(H, s)$ is winning. 
\end{remark}
The following theorem slightly modifies Herbrand's theorem, Theorem \ref{Herbrand}, to connect the $\PV_k$-provability of the implication between the existence of the strategies and the existence of $(k, 2)$-reductions.
\begin{theorem}\label{k2Herb}
Let $(G(\bar{x}, y, z), t(\bar{x}))$ and $(H(\bar{x}, v, w), s(\bar{x}))$ be two $(k, 2)$-games. Then 
\[
\PV_k \vdash \exists y \leq t(\bar{x}) \forall z \leq t(\bar{x}) G(\bar{x}, y, z) \to \exists v \leq s(\bar{x}) \forall w \leq s(\bar{x}) H(\bar{x}, v, w)
\]
iff there exists a $(k, 2)$-reduction from $(H, s)$ to $(G, t)$.
\end{theorem}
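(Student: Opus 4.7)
The plan is to prove the two directions separately. For the $(\Rightarrow)$ direction, I will reduce the implication to the second part of Theorem~\ref{Herbrand} by prenexifying. The implication is classically equivalent to $\forall y\,\exists v\,\exists z\,\forall w\,\Psi(\bar{x},y,v,z,w)$, where $\Psi$ is the quantifier-free formula
\[
\neg(y\leq t)\vee(z\leq t\wedge\neg G(\bar{x},y,z))\vee(v\leq s\wedge(w\leq s\to H(\bar{x},v,w))).
\]
Treating $y$ as a parameter and pairing $(v,z)$ into a single variable $u=\langle v,z\rangle$ via the pairing available in $\mathcal{L}_{\PV_k}$, I obtain $\PV_k\vdash\exists u\,\forall w\,\Psi'(\bar{x},y,u,w)$. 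Applying the second part of Theorem~\ref{Herbrand} then produces $\mathcal{L}_{\PV_k}$-terms $u_0(\bar{x},y),\ldots,u_m(\bar{x},y,w_0,\ldots,w_{m-1})$ with $\PV_k\vdash\bigvee_{i=0}^m\Psi'(\bar{x},y,u_i,w_i)$; unpairing yields terms $v_i$ and $z_i$ each depending on $\bar{x},y,w_0,\ldots,w_{i-1}$.

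From these I will build the $(k,2)$-reduction. Set $f_i(\bar{x},y,w_0,\ldots,w_{i-1})$ to be $v_i$ when $v_i\leq s$ and $0$ otherwise, which is an $\mathcal{L}_{\PV_k}$-term provably satisfying $f_i\leq s$. Set $g(\bar{x},y,w_0,\ldots,w_m)$ to be the first $z_j$ (with $j\leq m$) such that $z_j\leq t\wedge\neg G(\bar{x},y,z_j)$, and $0$ if no such $j$ exists; this is again an $\mathcal{L}_{\PV_k}$-term by case analysis on the quantifier-free conditions, and the bound $g\leq t$ holds since $g$ is either $0$ or such a $z_j$. To verify $G(\bar{x},y,g)\to\bigvee_iH(\bar{x},f_i,w_i)$ in $\PV_k$: when some $z_j$ witnesses $\neg G(\bar{x},y,z_j)$, we have $g=z_j$ so the implication is vacuous; when no $z_j$ does, the Herbrand disjunction collapses to $\bigvee_i(v_i\leq s\wedge H(\bar{x},v_i,w_i))$, which is exactly $\bigvee_iH(\bar{x},f_i,w_i)$ by the definition of $f_i$.

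For the $(\Leftarrow)$ direction, I will give a model-theoretic consistency argument, since a direct derivation inside $\PV_k$ is blocked by the absence of a $\Sigma^b_1(\mathcal{L}_{\PV_k})$-witnessing function for $\exists w\leq s\,\neg H$. Assuming a $(k,2)$-reduction but denying the implication, by completeness there is $M\vDash\PV_k$ containing $y_0\leq t$ with $\forall z\leq t\,G(\bar{x},y_0,z)$ and also satisfying $\forall v\leq s\,\exists w\leq s\,\neg H(\bar{x},v,w)$. In $M$ I pick $w_0\leq s$ with $\neg H(\bar{x},f_0(\bar{x},y_0),w_0)$, then $w_1\leq s$ with $\neg H(\bar{x},f_1(\bar{x},y_0,w_0),w_1)$, and so on for $m+1$ many rounds; this is a finite, choice-free construction. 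Applying the reduction at $y_0$ and these $w_i$, together with $g\leq t$ and $\forall z\leq t\,G(\bar{x},y_0,z)$, yields $G(\bar{x},y_0,g)$ and hence $\bigvee_iH(\bar{x},f_i,w_i)$, contradicting $\bigwedge_i\neg H(\bar{x},f_i,w_i)$. The main obstacle I anticipate is in the $(\Rightarrow)$ direction: making sure the case-analytic $g$ is a genuine $\mathcal{L}_{\PV_k}$-term and that the verification $G(\bar{x},y,g)\to\bigvee_iH(\bar{x},f_i,w_i)$ is carried out in $\PV_k$ using only quantifier-free case splits, rather than by any appeal to bounded collection or a $\Sigma^b_1(\mathcal{L}_{\PV_k})$-choice principle beyond what $\PV_k$ provides.
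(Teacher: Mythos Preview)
Your $(\Rightarrow)$ direction is correct and essentially identical to the paper's: both prenexify to the same quantifier-free matrix (the paper writes it as $\tilde G\to\tilde H$, you as $\Psi$, but they are literally the same formula), pair the two existentials, apply the second clause of Theorem~\ref{Herbrand}, and then build $g$ by a case split on the first $z_j$ falsifying $G$ while truncating the $v_i$'s to get the $f_i$'s. Your verification of the third bullet of Definition~\ref{2-reduction} is the same case analysis the paper uses.

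For $(\Leftarrow)$ your model-theoretic argument via completeness is valid, but your stated reason for avoiding a direct derivation is not: a direct proof inside $\PV_k$ is \emph{not} blocked. One does not need a $\Sigma^b_1(\mathcal{L}_{\PV_k})$-Skolem function for $\exists w\le s\,\neg H$; the data of the reduction already form a Herbrand-style certificate, and the standard sequent-calculus argument goes through. From $G(\bar x,y,g)\to\bigvee_i H(\bar x,f_i,w_i)$ under $y\le t$, $\bar w\le s$, first $\exists$-introduce on $z$ using $g\le t$ to get $\exists z\le t\,\neg G(\bar x,y,z)\vee\bigvee_i H(\bar x,f_i,w_i)$; now $w_m$ is fresh (it no longer occurs in the first disjunct nor in $f_0,\ldots,f_m$), so apply $(\forall R)$ on $w_m$ and then $(\exists R)$ with $v=f_m$; repeat for $w_{m-1},\ldots,w_0$ and contract. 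This is presumably what the paper means by ``one direction is clear.'' Your completeness detour reaches the same conclusion, so the proposal is correct; just drop the remark that a direct derivation is blocked.
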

\begin{proof}
One direction is clear. For the other, assume 
\[
\PV_k \vdash \exists y \leq t(\bar{x}) \forall z \leq t(\bar{x}) G(\bar{x}, y, z) \to \exists v \leq s(\bar{x}) \forall w \leq s(\bar{x}) H(\bar{x}, v, w).
\]
Define $\Tilde{G}(\bar{x}, y, z)$ as $[y \leq t(\bar{x}) \wedge (z \leq t(\bar{x}) \to G(\bar{x}, y, z))]$ and $\Tilde{H}(\bar{x}, v, w)$ as $[v \leq s(\bar{x}) \wedge (w \leq s(\bar{x}) \to H(\bar{x}, v, w))]$. Now, by moving the quantifiers, we have
\[
\PV_k \vdash \forall y \exists vz \forall w [\Tilde{G}(\bar{x}, y, z) \to \Tilde{H}(\bar{x}, v, w)].
\]
Using the pairing function available in $\PV$, we can make two variables $v$ and $z$ into one variable, apply Herbrand's theorem, Theorem \ref{Herbrand} and then retrieve $y$ and $z$ again, by projections. Therefore, there are $\mathcal{L}_{\PV_k}$-terms $g_0(\bar{x}, y)$,  $h_0(\bar{x}, y)$, $g_1(\bar{x}, y, w_0)$, $h_1(\bar{x}, y, w_0)$, ..., $g_m(\bar{x}, y, w_0, \ldots, w_{m-1})$ and $h_m(\bar{x}, y, w_0, \ldots, w_{m-1})$ such that
\[
\PV_k \vdash \bigvee_{i=0}^{m} [\Tilde{G}(\bar{x}, y, g_i(\bar{x}, y, w_0, \ldots, w_{i-1})) \to  \Tilde{H}(\bar{x}, h_i(\bar{x}, y, w_0, \ldots, w_{i-1}), w_i)].
\]
Define $g'(\bar{x}, y, w_0, \ldots, w_{m})$ by cases: if $\tilde{G}(\bar{x}, y, g_0(\bar{x}, y))$ is false, define $g'$ as $g_0(\bar{x}, y)$; if $\tilde{G}(\bar{x}, y, g_0(\bar{x}, y))$ is true but $\tilde{G}(\bar{x}, y, g_1(\bar{x}, y, w_0))$ is false, define $g'$ as $g_1(\bar{x}, y, w_0)$; if both $\tilde{G}(\bar{x}, y, g_0(\bar{x}, y))$ and $\tilde{G}(\bar{x}, y, g_1(\bar{x}, y, w_0))$ are true but $\tilde{G}(\bar{x}, y, g_2(\bar{x}, y, w_0, w_1))$ is false, define $g'$ as $g_2(\bar{x}, y, w_0, w_1)$ and so on. Finally, if all of $G(\bar{x}, y, g_i(\bar{x}, y, w_0, \ldots, w_{i-1}))$'s are true, define $g'$ as $0$: 
\[
g'(\bar{x}, y, w_0, \ldots, w_{m})=
\begin{cases}
g_0(\bar{x}, y) &  \neg \tilde{G}(\bar{x}, y, g_0(\bar{x}, y))\\
g_1(\bar{x}, y, w_0) & \tilde{G}(\bar{x}, y, g_0(\bar{x}, y)), \neg \tilde{G}(\bar{x}, y, g_1(\bar{x}, y, w_0))\\
$\ldots$ & $\ldots$\\
0  & o.w.
\end{cases} 
\]
Note that $g'$ is defined in a way that unless $\bigwedge_{i=0}^{m} \Tilde{G}(\bar{x}, y, g_i(\bar{x}, y, w_0, \ldots, w_{i-1}))$ is true, we always have $\neg \Tilde{G}(\bar{x}, y, g'(\bar{x}, y, w_0, \ldots, w_{m}))$.
Therefore, it is easy to see that
\[
\PV_k \vdash [\Tilde{G}(\bar{x}, y, g'(\bar{x}, y, w_0, \ldots, w_{m})) \to  \bigwedge_{i=0}^{m} \Tilde{G}(\bar{x}, y, g_i(\bar{x}, y, w_0, \ldots, w_{i-1}))],
\]
and hence
\[
\PV_k \vdash [\Tilde{G}(\bar{x}, y, g'(\bar{x}, y, w_0, \ldots, w_{m})) \to  \bigvee_{i=0}^{m}\Tilde{H}(\bar{x}, h_i(\bar{x}, y, w_0, \ldots, w_{i-1}), w_i)].
\]
Define
\[
f_i(\bar{x}, y, w_0, \ldots, w_{i-1})=
\begin{cases}
h_i(\bar{x}, y, w_0, \ldots, w_{i-1}) &  h_i(\bar{x}, y, w_0, \ldots, w_{i-1}) \leq s(\bar{x})\\
0  & h_i(\bar{x}, y, w_0, \ldots, w_{i-1}) > s(\bar{x})
\end{cases} 
\]
for any $0 \leq i \leq m$ and set
\[
g(\bar{x}, y, w_0, \ldots, w_{m})=
\begin{cases}
g'(\bar{x}, y, w_0, \ldots, w_{m}) &  g'(\bar{x}, y, w_0, \ldots, w_{m}) \leq t(\bar{x})\\
0  & g'(\bar{x}, y, w_0, \ldots, w_{m}) > t(\bar{x})
\end{cases} 
\]
It is clear that
$\PV_k \vdash f_i(\bar{x}, y, w_0, \ldots, w_{i-1}) \leq s(\bar{x})$, for any $0 \leq i \leq m$ and $\PV_k \vdash g(\bar{x}, y, w_0, \ldots, w_{m}) \leq t(\bar{x})$. It is also clear that 
\[
\forall y \leq t(\bar{x})[G(\bar{x}, y,  g(\bar{x}, y, w_0, \ldots, w_{m})) \to \Tilde{G}(\bar{x}, y,  g'(\bar{x}, y, w_0, \ldots, w_{m}))]
\]
and 
\[
\forall \bar{w} \leq s(\bar{x}) [\tilde{H}(\bar{x},  h_i(\bar{x}, y, w_0, \ldots, w_{i-1}), w_i) \to 
\]
\[
H(\bar{x}, y,  f_i(\bar{x}, y, w_0, \ldots, w_{i-1}), w_i)]
\]
are provable in $\PV_k$.
Therefore, we reach the implication
$\forall \bar{w} \leq s(\bar{x}) \forall y \leq t(\bar{x}) [G(\bar{x}, y, g(\bar{x}, y, w_0, \ldots, w_{m})) \to \bigvee_{i=0}^m H(\bar{x}, f_i(\bar{x}, y, w_0, \ldots, w_{i-1}), w_i)]$ in $\PV_k$.
\end{proof}


\begin{definition}\label{k2PLS}
Let $k \geq 2$ be a natural number, $A(\bar{x}, y, z) \in \hat{\Sigma}^b_{k-1}$ be an $\mathcal{L}_{\PV_k}$-formula and $t(\bar{x})$ and $r(\bar{x})$ be two $\mathcal{L}_\PV$-terms. By a \emph{$\PLS_{(k,2)}$-program} for the pair $(A(\bar{x}, y, z), r(\bar{x}))$, we mean a $(k, 2)$-game $(G(\bar{x}, u, v, w), s(\bar{x}))$ (read $u$ as a parameter) and
\begin{itemize}
\item[$\bullet$]
an initial sequence $i(\bar{x}, w)$ of $\mathcal{L}_{\PV_k}$-terms as a $(k, 2)$-reduction from the game $(G(\bar{x}, 0, v, w), s(\bar{x}))$ to $(\top, s(\bar{x}))$,
\item[$\bullet$]
a sequence $N(\bar{x}, u, v, w)$ of $\mathcal{L}_{\PV_k}$-terms as a $(k, 2)$-reduction from the game 
$(G(\bar{x}, u+1, v, w), s(\bar{x}))$ to 
$(G(\bar{x}, u, v, w), s(\bar{x}))$,
\item[$\bullet$]
a sequence $p(\bar{x}, v, z)$ of $\mathcal{L}_{\PV_k}$-terms as a $(k, 2)$-reduction from the game $(A(\bar{x}, y, z), r(\bar{x}))$ to $(G(\bar{x}, t(\bar{x}), v, w), s(\bar{x}))$. Here, we pretend that $A(\bar{x}, y, z)$ is a quantifier-free $\mathcal{L}_{\PV_k}$-formula.
\end{itemize}
By $\PLS_{(k, 2)}$, we mean the class of all the pairs $(A(\bar{x}, y, z), r(\bar{x}))$ for which there exists a $\PLS_{(k, 2)}$-program. By $\PLS_{(k, 2)}^p$, we mean the class of all the pairs $(A(\bar{x}, y, z), r(\bar{x}))$ for which there exists a $\PLS_{(k, 2)}$-program with polynomial length, i.e., $t(\bar{x})=q(|\bar{x}|)$, for some polynomial $q$.
\end{definition}

One can read a (polynomial) $\PLS_{(k, 2)}$-program as (a polynomially) an exponentially long sequence of reductions between $2$-turn games, starting with an explicit winning strategy for the first game, where all the functions and predicates live in the $k$-th level of the polynomial hierarchy verified in $\PV_k$.\\

Similar to what we had in the last subsubsection, we can finally witness provability in $T^k_2$ (resp. $S^k_2$) by (resp. polynomial) $\PLS_{(k, 2)}$-programs.
\begin{corollary}\label{k2PLSWitnessing}
Let $k \geq 2$, $A(\bar{x}, y, z) \in \hat{\Sigma}^b_{k-2}$ and $r(\bar{x})$ be an $\mathcal{L}_{\PV}$-term:
\begin{description}
\item[$(i)$]
$S^{k}_2 \vdash \forall \bar{x} \exists y \leq r(\bar{x}) \forall z \leq r(x) A(\bar{x}, y, z)$ iff $(A(\bar{x}, y, z), r(\bar{x})) \in \PLS^p_{(k-1, 2)}$. 
\item[$(ii)$]
$T^{k}_2 \vdash \forall \bar{x} \exists y \leq r(\bar{x}) \forall z \leq r(\bar{x}) A(\bar{x}, y, z)$ iff $(A(\bar{x}, y, z), r(\bar{x})) \in \PLS_{(k-1,2)}$.
\end{description}
\end{corollary}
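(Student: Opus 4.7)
The plan is to run exactly the strategy of Corollary \ref{k1PLSWitnessing}, with one extra layer of bounded quantification everywhere: the $1$-turn game reductions become the $2$-turn reductions of Definition \ref{2-reduction}, and Theorem \ref{k2Herb} replaces Theorem \ref{1ImpToRed}. I only sketch part $(i)$; part $(ii)$ is identical modulo replacing $\hat{\Sigma}^b_k$-$\LInd$ by $\hat{\Sigma}^b_k$-$\Ind$ and dropping the polynomial bound on the flow length.

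For the direction $(A, r) \in \PLS^p_{(k-1,2)} \Rightarrow S^k_2 \vdash \forall \bar{x}\,\exists y{\leq}r\,\forall z{\leq}r\, A$, I would unpack the $\PLS^p_{(k-1,2)}$-program $(G, s, i, N, p, q(|\bar{x}|))$ by applying Theorem \ref{k2Herb} to each of its three $(k-1,2)$-reductions, obtaining $\PV_{k-1}$-provable versions of
\[
\exists v{\leq}s\,\forall w{\leq}s\, G(\bar{x}, 0, v, w),
\]
\[
\exists v{\leq}s\,\forall w{\leq}s\, G(\bar{x}, u, v, w) \;\to\; \exists v{\leq}s\,\forall w{\leq}s\, G(\bar{x}, u{+}1, v, w),
\]
\[
\exists v{\leq}s\,\forall w{\leq}s\, G(\bar{x}, q(|\bar{x}|), v, w) \;\to\; \exists y{\leq}r\,\forall z{\leq}r\, A(\bar{x}, y, z).
\]
Interpreting $\PV_{k-1}$ inside $S^k_2$ turns each quantifier-free $\mathcal{L}_{\PV_{k-1}}$-formula into a $\hat{\Pi}^b_{k-1}$-formula in $\mathcal{L}_{\PV}$, so the invariant $I(u, \bar{x}) := \exists v{\leq}s\,\forall w{\leq}s\, G(\bar{x}, u, v, w)$ lies in $\hat{\Sigma}^b_k$. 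Applying $\hat{\Sigma}^b_k$-$\LInd$ (available in $S^k_2$) up to length $q(|\bar{x}|)$ and then invoking the third implication finishes the direction.

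For the converse, I would start from $S^k_2 \vdash \forall \bar{x}\,\exists y{\leq}r\,\forall z{\leq}r\, A$. The negation $\forall y{\leq}r\,\exists z{\leq}r\,\neg A$ is $\hat{\Pi}^b_k$ (since $\neg A \in \hat{\Pi}^b_{k-2}$), so Theorem \ref{kMain}$(i)$ yields a polynomial $k$-flow $(H(u, \bar{x}), q(|\bar{x}|))$ from $\forall y{\leq}r\,\exists z{\leq}r\,\neg A$ to $\bot$. Following the padding, prenexification and pairing bookkeeping from the proof of Corollary \ref{k1PLSWitnessing}, I would rewrite $H$ in the uniform shape $H(u, \bar{x}) \equiv \forall v{\leq}s\,\exists w{\leq}s\, J(\bar{x}, u, v, w)$ with $J \in \hat{\Pi}^b_{k-2}$ and $s$ a $\PV$-monotone term independent of $u$. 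Setting $G(\bar{x}, u, v, w) := \neg J(\bar{x}, q(|\bar{x}|) \dotminus u, v, w)$, which is $\hat{\Sigma}^b_{k-2}$ and hence quantifier-free in $\mathcal{L}_{\PV_{k-1}}$, the three properties of the $k$-flow translate, by contraposition and reindexing, into exactly the three $\PV_{k-1}$-provable implications displayed above. Applying Theorem \ref{k2Herb} to each then extracts the $(k-1,2)$-reductions $i$, $N$, $p$ constituting the desired $\PLS^p_{(k-1,2)}$-program.

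The main obstacle I anticipate is the prenex step in the converse: the $\hat{\Pi}^b_k$-flow invariant $H(u, \bar{x})$ has $k$ alternating bounded blocks, and to match Definition \ref{k2PLS} one must collapse these into exactly two outer bounded quantifiers with a common bound $s(\bar{x})$ independent of $u$, while the innermost remainder $J$ must still sit in $\hat{\Pi}^b_{k-2}$. This is the direct two-quantifier analogue of the padding calculation in Corollary \ref{k1PLSWitnessing}; it is tedious but entirely mechanical, carried out inside $\PV$ via the pairing function and the standard estimate $|s(\bar{x})| \leq q_s(|\bar{x}|)$. Once this shape is achieved, the rest of the argument propagates from the $l = 1$ case without further incident.
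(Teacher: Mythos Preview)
Your proposal is correct and follows essentially the same route as the paper's own proof: both directions mirror Corollary~\ref{k1PLSWitnessing}, with Theorem~\ref{k2Herb} in place of Theorem~\ref{1ImpToRed}, the same interpretation of $\PV_{k-1}$ into $S^k_2$ for the $\hat{\Sigma}^b_k$-$\LInd$ step, and the identical reindexing $G(\bar{x},u,v,w)=\neg J(\bar{x},q(|\bar{x}|)\dotminus u,v,w)$ in the converse. The prenex/pairing bookkeeping you flag as the main obstacle is exactly what the paper sweeps under ``without loss of generality''; your assessment that it is mechanical and handled inside $\PV$ is accurate.
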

\begin{proof}
The proof is similar to that of Corollary \ref{k1PLSWitnessing}. Therefore, we only explain the main ingredients for $(i)$. For the right to left, assume that there is a $\PLS_{(k, 2)}$-program for $(A(\bar{x}, y, z), r(\bar{x}))$ with the length $q(|\bar{x}|)$, for some polynomial $q$.
We use Theorem \ref{k2Herb} to transform the existence of the reductions in the $\PLS_{(k, 2)}$-program to the following provable implications:
\begin{itemize}
\item[$\bullet$]
$\PV_{k-1} \vdash \exists v \leq s(\bar{x}) \forall w \leq s(\bar{x}) G(\bar{x}, 0, v, w)$.
\item[$\bullet$]
$\PV_{k-1} \vdash \exists v \leq s(\bar{x}) \forall w \leq s(\bar{x}) G(\bar{x}, u, v, w) \rightarrow \exists v \leq s(\bar{x}) \forall w \leq s(\bar{x}) G(\bar{x}, u+1, v, w)$.
\item[$\bullet$]
$\PV_{k-1} \vdash  \exists v \leq s(\bar{x}) \forall w \leq s(\bar{x}) G(\bar{x}, q(|\bar{x}|), v, w) \to \exists y \leq r(\bar{x}) \forall z \leq r(\bar{x}) A(\bar{x}, y, z)$.
\end{itemize}
As any quantifier-free $\mathcal{L}_{\PV_{k-1}}$-formula can be interpreted as an $\mathcal{L}_{\PV}$-formula in $\hat{\Pi}^b_{k-1}$ and $\PV_{k-1}$ can be interpreted in $S^{k-1}_2$, we can pretend that all the above implications are provable in $S^k_2$ and $G \in \hat{\Pi}^b_{k-1}$. Therefore, we can assume that $\exists v \leq s(\bar{x}) \forall w \leq s(\bar{x}) G(\bar{x}, u, v, w) \in \hat{\Sigma}^b_k$. Using $\LInd$ in $S^k_2$ on the formula $\exists v \leq s(\bar{x}) \forall w \leq s(\bar{x}) G(\bar{x}, u, v, w)$, we reach $S^k_2 \vdash \exists y \leq r(\bar{x}) \forall z \leq r(\bar{x}) A(\bar{x}, y, z)$. Conversely, we assume that $S^k_2 \vdash \exists y \leq r(\bar{x}) \forall z \leq r(\bar{x}) A(\bar{x}, y, z)$. Hence, $\forall y \leq r(\bar{x}) \exists z \leq r(\bar{x}) \neg A(\bar{x}, y, z) \to \bot$ is provable in $S^k_2$. Since $A \in \hat{\Sigma}^b_{k-2}$, the formula $\forall y \leq r(\bar{x}) \exists z \leq r(\bar{x}) \neg A(\bar{x}, y, z) $ is in $\hat{\Pi}^b_{k}$. Hence, by Theorem \ref{kMain}, we have $\forall y \leq r(\bar{x}) \exists z \leq r(\bar{x}) \neg A(\bar{x}, y, z) \rhd^p_k \bot$. Call the $k$-flow $(H(u, \bar{x}), t(\bar{x}))$, where $t(\bar{x})=q(|\bar{x}|)$, for some polynomial $q$. Without loss of generality, write $H(u, \bar{x})$ in the form $\forall v \leq s(\bar{x}) \exists w \leq s(\bar{x}) J(\bar{x}, u, v, w)$, where $J \in \hat{\Pi}^b_{k-2}$. As $k \geq 2$, the theory $\PV$ is a subtheory of $\PV_{k-1}$. Therefore, moving the implications in the definition of the $k$-flow from $\PV$ to $\PV_{k-1}$, we have:
\begin{itemize}
\item[$\bullet$]
$\PV_{k-1} \vdash  [\forall y \leq r(\bar{x}) \exists z \leq r(\bar{x}) \; \neg A(\bar{x}, y, z)] \to \forall v \leq s(\bar{x}) \exists w \leq s(\bar{x}) J(\bar{x}, 0, v, w)$.
\item[$\bullet$]
$\PV_{k-1} \vdash \forall v \leq s(\bar{x}) \exists w \leq s(\bar{x}) J(\bar{x}, q(|\bar{x}|), v, w) \to \bot$.
\item[$\bullet$]
$\PV_{k-1} \vdash \forall v \leq s(\bar{x}) \exists w \leq s(\bar{x}) J(\bar{x}, u, v, w) \rightarrow \forall v \leq s(\bar{x}) \exists w \leq s(\bar{x}) J(\bar{x}, u+1, v, w)$.
\end{itemize}
As $J \in \hat{\Pi}^b_{k-2}$, in $\PV_{k-1}$, we can pretend that $J$ is a quantifier-free $\mathcal{L}_{\PV_{k-1}}$-formula. Define $G(\bar{x}, u, v, w)$ as $\neg J(\bar{x}, q(|\bar{x}|) \dotminus u, v, w)$. Therefore, we have:
\begin{itemize}
\item[$\bullet$]
$\PV_{k-1} \vdash \exists v \leq s(\bar{x}) \forall w \leq s(\bar{x}) G(\bar{x}, q(|\bar{x}|), v, w) \to  \exists y \leq r(\bar{x}) \forall z \leq r(\bar{x}) A(\bar{x}, y, z) $.
\item[$\bullet$]
$\PV_{k-1} \vdash \exists v \leq s(\bar{x}) \forall w \leq s(\bar{x}) G(\bar{x}, 0, v, w)$.
\item[$\bullet$]
$\PV_{k-1} \vdash \exists v \leq s(\bar{x}) \forall w \leq s(\bar{x}) G(\bar{x}, u, v, w) \rightarrow \exists v \leq s(\bar{x}) \forall w \leq s(\bar{x}) G(\bar{x}, u+1, v, w)$.
\end{itemize}
Finally, it is enough to use Theorem \ref{k2Herb} to get a $\PLS_{(k-1, 2)}$-program for the pair $(A(\bar{x}, y, z), r(\bar{x}))$ with the length $q(|\bar{x}|)$. Hence, $(A(\bar{x}, y, z), r(\bar{x})) \in \PLS^p_{(k-1, 2)}$.
\end{proof}

It is worth putting Corollary \ref{k2PLSWitnessing} for the concrete case $k=2$ into plain words. Here, the corollary characterizes the $T^2_2$-provability (resp. $S^2_2$-provability) of a formula in the form $\forall \bar{x} \exists y \leq r(\bar{x}) \forall z \leq r(x) A(\bar{x}, y, z)$, where $A$ is a polynomial time computable predicate represented as a quantifier-free $\mathcal{L}_{\PV}$-formula by the existence of an exponentially (resp. polynomially) long sequence of polynomial time reductions between polynomial time games starting on an explicit polynomial time winning strategy in the first game.\\

\noindent \textbf{Acknowledgements:}
We are grateful to Pavel Pudl\'{a}k and Neil Thapen to put the connection between the total search problems and the theories of arithmetic into our attention. We are also thankful for the fruitful discussions we had. The support by the FWF project P 33548 is also gratefully acknowledged.

\bibliographystyle{splncs04}
\bibliography{newbib}


\end{document}